\newcommand{\Mod}[1]{\ (\textup{mod}\ #1)}
\theoremstyle{plain} 
\newtheorem{theorem}{\indent\sc Theorem}[section]
\newtheorem{lemma}[theorem]{\indent\sc Lemma}
\newtheorem{proposition}[theorem]{\indent\sc Proposition}
\theoremstyle{definition} 
\newtheorem{definition}[theorem]{\indent\sc Definition}
\newtheorem{remark}[theorem]{\indent\sc Remark}
\newtheorem{example}[theorem]{\indent\sc Example}
\def\address#1#2{\begingroup
\noindent\parbox[t]{7.8cm}{%
\small{\scshape\ignorespaces#1}\par\vskip1ex
\noindent\small{\itshape E-mail address}%
\/: #2\par\vskip4ex}\hfill%
\endgroup}%
\title{On some extension of Gauss' work and applications}
\author{
\textsc{Ho Yun Jung, Ja Kyung Koo and Dong Hwa Shin} 
}
\date{} 
\begin{document}

\allowdisplaybreaks

\maketitle

\footnote{ 
2010 \textit{Mathematics Subject Classification}. Primary 11E16; Secondary 11F03, 11G15, 11R37.}
\footnote{ 
\textit{Key words and phrases}. binary quadratic forms, class field theory, complex multiplication, modular functions.} \footnote{
\thanks{
The first (corresponding) author was supported by the National Research Foundation of Korea (NRF) grant funded by the Korea Government (MIST) (2016R1A5A1008055 and
2017R1C1B2010652).
The third author was supported by the National Research Foundation of Korea (NRF) grant funded by the Korea
government (MSIT) (2017R1A2B1006578), and by Hankuk University of Foreign Studies Research Fund of 2018.}
}

\begin{abstract}
Let $K$ be an imaginary quadratic field of discriminant $d_K$ with ring of integers $\mathcal{O}_K$, and let $\tau_K$ be an element of the complex upper half-plane so that $\mathcal{O}_K=[\tau_K,\,1]$. For a positive integer $N$, let $\mathcal{Q}_N(d_K)$ be the set of primitive positive definite binary quadratic forms of discriminant $d_K$ with leading coefficients relatively prime to $N$. Then, with any congruence subgroup $\Gamma$ of $\mathrm{SL}_2(\mathbb{Z})$ one can define an equivalence relation $\sim_\Gamma$ on
$\mathcal{Q}_N(d_K)$. Let $\mathcal{F}_{\Gamma,\,\mathbb{Q}}$ denote the field of meromorphic modular functions for $\Gamma$ with rational Fourier coefficients.
We show that the set of equivalence classes $\mathcal{Q}_N(d_K)/\sim_\Gamma$ can
be equipped with a group structure isomorphic to
$\mathrm{Gal}(K\mathcal{F}_{\Gamma,\,\mathbb{Q}}(\tau_K)/K)$ for some $\Gamma$, which generalizes further into the classical theory of complex multiplication over ring class fields.
\end{abstract}

\maketitle

\section {Introduction}

For a negative integer $D$ such that
$D\equiv0$ or $1\Mod{4}$, let $\mathcal{Q}(D)$ be the
set of primitive positive definite binary quadratic forms
$Q(x,\,y)=ax^2+bxy+cy^2\in\mathbb{Z}[x,\,y]$ of discriminant $b^2-4ac=D$. The modular group $\mathrm{SL}_2(\mathbb{Z})$
(or $\mathrm{PSL}_2(\mathbb{Z})$)
acts on the set $\mathcal{Q}(D)$ from the right and defines the proper equivalence $\sim$ as
\begin{equation*}
Q\sim Q'\quad\Longleftrightarrow\quad
Q'=Q^\gamma
=Q\left(\gamma\begin{bmatrix}x\\y\end{bmatrix}\right)~
\textrm{for some}~\gamma\in\mathrm{SL}_2(\mathbb{Z}).
\end{equation*}
In his celebrated work Disquisitiones Arithmeticae
of 1801 (\cite{Gauss}), Gauss introduced the beautiful law of
composition of integral binary quadratic forms. And, it seems
that he first understood the set of equivalence classes
$\mathrm{C}(D)=\mathcal{Q}(D)/\sim$ as a group.
However, his original proof of the group structure is long
and complicated to work in practice. After 93 years later
Dirichlet (\cite{Dirichlet}) presented a different approach to
the study of composition and genus theory, which seemed to be
influenced by Legendre. (See \cite[$\S$3]{Cox}.) On the other hand,
in 2004 Bhargava (\cite{Bhargava}) derived a wonderful general
law of composition on $2\times2\times2$ cubes of integers, from which
he was able to obtain Gauss' composition law on binary quadratic forms as a simple special case. Now, in this paper we will make use of Dirichlet's composition law to proceed the arguments.
\par
Given the order $\mathcal{O}$ of discriminant $D$ in the imaginary quadratic field $K=\mathbb{Q}(\sqrt{D})$,
let $I(\mathcal{O})$ be the group of proper fractional $\mathcal{O}$-ideals and $P(\mathcal{O})$ be its subgroup
of nonzero principal $\mathcal{O}$-ideals.
When $Q=ax^2+bxy+cy^2$ is an element of $\mathcal{Q}(D)$,
let $\omega_Q$ be the zero of the quadratic polynomial
$Q(x,\,1)$ in $\mathbb{H}
=\{\tau\in\mathbb{C}~|~\mathrm{Im}(\tau)>0\}$, namely
\begin{equation}\label{wQ}
\omega_Q=\frac{-b+\sqrt{D}}{2a}.
\end{equation}
It is well known that $[\omega_Q,\,1]=\mathbb{Z}\omega_Q+\mathbb{Z}$ is
a proper fractional $\mathcal{O}$-ideal and the form class group $\mathrm{C}(D)$ under the Dirichlet composition is
isomorphic to the $\mathcal{O}$-ideal class group $\mathrm{C}(\mathcal{O})=I(\mathcal{O})/P(\mathcal{O})$
through the isomorphism
\begin{equation}\label{CDCO}
\mathrm{C}(D)\stackrel{\sim}{\rightarrow}\mathrm{C}(\mathcal{O}),\quad
[Q]\mapsto[[\omega_Q,\,1]].
\end{equation}
On the other hand,
if we let $H_\mathcal{O}$ be the ring class field of order $\mathcal{O}$ and $j$ be the elliptic modular
function on lattices in $\mathbb{C}$, then we attain the isomorphism
\begin{equation}\label{COGHK}
\mathrm{C}(\mathcal{O})\stackrel{\sim}{\rightarrow}\mathrm{Gal}(H_\mathcal{O}/K),
\quad
[\mathfrak{a}]\mapsto(j(\mathcal{O})
\mapsto j(\overline{\mathfrak{a}}))
\end{equation}
by the theory of complex multiplication
(\cite[Theorem 11.1 and Corollary 11.37]{Cox} or
\cite[Theorem 5 in Chapter 10]{Lang87}). Thus, composing
two isomorphisms given in (\ref{CDCO})
and (\ref{COGHK}) yields the isomorphism
\begin{equation}\label{CDGHK}
\mathrm{C}(D)\stackrel{\sim}{\rightarrow}\mathrm{Gal}(H_\mathcal{O}/K),
\quad[Q]\mapsto(j(\mathcal{O})\mapsto
j([-\overline{\omega}_Q,\,1])).
\end{equation}
\par
Now, let $K$ be an imaginary quadratic field of discriminant
$d_K$ and $\mathcal{O}_K$ be its ring of integers.
If we set
\begin{equation}\label{tauK}
\tau_K=\left\{\begin{array}{ll}
\sqrt{d_K}/2 & \textrm{if}~d_K\equiv0\Mod{4},\\
(-1+\sqrt{d_K})/2 & \textrm{if}~d_K\equiv1\Mod{4},
\end{array}\right.
\end{equation}
then we get $\mathcal{O}_K=[\tau_K,\,1]$.
For a positive integer $N$ and $\mathfrak{n}=N\mathcal{O}_K$,
let $I_K(\mathfrak{n})$ be the group of fractional ideals
of $K$ relatively prime to $\mathfrak{n}$ and
$P_K(\mathfrak{n})$ be its subgroup of principal fractional ideals.
Furthermore, let
\begin{eqnarray*}
P_{K,\,\mathbb{Z}}(\mathfrak{n})&=&\{\nu\mathcal{O}_K~|~
\nu\in K^*~\textrm{such that}~\nu\equiv^*m\Mod{\mathfrak{n}}
~\textrm{for some integer $m$ prime to $N$}\},\\
P_{K,\,1}(\mathfrak{n})&=&\{\nu\mathcal{O}_K~|~
\nu\in K^*~\textrm{such that}~\nu\equiv^*1\Mod{\mathfrak{n}}\}
\end{eqnarray*}
which are subgroups of $P_K(\mathfrak{n})$.
As for the multiplicative congruence $\equiv^*$ modulo $\mathfrak{n}$,
we refer to \cite[$\S$IV.1]{Janusz}.
Then the ring class field $H_\mathcal{O}$ of order
$\mathcal{O}$ with conductor $N$ in $K$ and
the ray class field $K_\mathfrak{n}$ modulo $\mathfrak{n}$
are defined to be the unique abelian extensions of $K$ for which
the Artin map modulo $\mathfrak{n}$ induces the isomorphisms
\begin{equation*}
I_K(\mathfrak{n})/P_{K,\,\mathbb{Z}}(\mathfrak{n})
\simeq\mathrm{Gal}(H_\mathcal{O}/K)
\quad\textrm{and}\quad
I_K(\mathfrak{n})/P_{K,\,1}(\mathfrak{n})
\simeq\mathrm{Gal}(K_\mathfrak{n}/K),
\end{equation*}
respectively (\cite[$\S$8 and $\S$9]{Cox} and \cite[Chapter V]{Janusz}).
And, for a congruence subgroup $\Gamma$ of level $N$ in $\mathrm{SL}_2(\mathbb{Z})$, let
$\mathcal{F}_{\Gamma,\,\mathbb{Q}}$
be the field of meromorphic modular functions for $\Gamma$
whose Fourier expansions with respect to $q^{1/N}=e^{2\pi\mathrm{i}\tau/N}$ have rational coefficients and let
\begin{equation*}
K\mathcal{F}_{\Gamma,\,\mathbb{Q}}(\tau_K)
=K(h(\tau_K)~|~h\in\mathcal{F}_{\Gamma,\,\mathbb{Q}}~
\textrm{is finite at}~\tau_K).
\end{equation*}
Then it is a subfield of the maximal abelian extension $K^\mathrm{ab}$ of $K$ (\cite[Theorem 6.31 (i)]{Shimura}). In particular, for
the congruence subgroups
\begin{eqnarray*}
\Gamma_0(N)&=&\left\{\gamma\in\mathrm{SL}_2(\mathbb{Z})~|~
\gamma\equiv\begin{bmatrix}\mathrm{*}&\mathrm{*}\\
0&\mathrm{*}\end{bmatrix}\Mod{N M_2(\mathbb{Z})}\right\},\\
\Gamma_1(N)&=&\left\{\gamma\in\mathrm{SL}_2(\mathbb{Z})~|~
\gamma\equiv\begin{bmatrix}1&\mathrm{*}\\
0&1\end{bmatrix}\Mod{N M_2(\mathbb{Z})}\right\},
\end{eqnarray*}
we know that
\begin{equation}\label{specialization}
H_\mathcal{O}=K\mathcal{F}_{\Gamma_0(N),\,\mathbb{Q}}(\tau_K)
\quad\textrm{and}\quad
K_\mathfrak{n}=K\mathcal{F}_{\Gamma_1(N),\,\mathbb{Q}}(\tau_K)
\end{equation}
(\cite[Corollary 5.2]{C-K} and \cite[Theorem 3.4]{K-S13}).
On the other hand, one can naturally defines an equivalence
relation $\sim_\Gamma$ on the subset
\begin{equation}\label{QNdK}
\mathcal{Q}_N(d_K)=
\{ax^2+bxy+cy^2\in\mathcal{Q}(d_K)~|~\gcd(N,\,a)=1\}
\end{equation}
of $\mathcal{Q}(d_K)$ by
\begin{equation}\label{simG}
Q\sim_\Gamma Q'\quad\Longleftrightarrow\quad
Q'=Q^\gamma~\textrm{for some}~\gamma\in\Gamma.
\end{equation}
Observe that $\Gamma$ may not act on $\mathcal{Q}_N(d_K)$.
Here, by $Q^\gamma$ we mean the action of $\gamma$ as an element of $\mathrm{SL}_2(\mathbb{Z})$.
\par
For a subgroup $P$ of $I_K(\mathfrak{n})$ with $P_{K,\,1}(\mathfrak{n})\subseteq P\subseteq P_K(\mathfrak{n})$,
let $K_P$ be the abelian extension of $K$ so that
$I_K(\mathfrak{n})/P\simeq\mathrm{Gal}(K_P/K)$.
In this paper,
motivated by (\ref{CDGHK}) and (\ref{specialization})
we shall present several pairs of
$P$ and $\Gamma$ for which
\begin{itemize}
\item[(i)] $K_P=K\mathcal{F}_{\Gamma,\,\mathbb{Q}}(\tau_K)$,
\item[(ii)] $\mathcal{Q}_N(d_K)/\sim_\Gamma$ becomes a group isomorphic to $\mathrm{Gal}(K_P/K)$
via the isomorphism
\begin{equation}\label{desiredisomorphism}
\begin{array}{ccl}
\mathcal{Q}_N(d_K)/\sim_\Gamma&\stackrel{\sim}{\rightarrow}&
\mathrm{Gal}(K_P/K)\\
\left[Q\right]&\mapsto&(h(\tau_K)\mapsto h(-\overline{\omega}_Q)~|~
h\in\mathcal{F}_{\Gamma,\,\mathbb{Q}}~
\textrm{is finite at}~\tau_K)
\end{array}
\end{equation}
\end{itemize}
(Propositions \ref{KPKF}, \ref{satisfies} and
Theorems \ref{formclassgroup}, \ref{Galoisgroups}). This result would
be certain extension of Gauss' original work.
We shall also develop an algorithm of
finding distinct form classes in $\mathcal{Q}_N(d_K)/\sim_\Gamma$
and give a concrete example (Proposition \ref{algorithm} and Example \ref{example}). To this end, we shall apply
Shimura's theory
which links
the class field theory for imaginary quadratic fields and the theory of modular functions
(\cite[Chapter 6]{Shimura}).
And, we shall not only use but also improve the ideas of
our previous work \cite{E-K-S17}.
See Remarks \ref{difference}.

\section {Extended form class groups as ideal class groups}\label{sect2}

Let $K$ be an imaginary quadratic field of discriminant $d_K$
and $\tau_K$ be as in (\ref{tauK}).
And, let $N$ be a positive integer, $\mathfrak{n}=N\mathcal{O}_K$
and $P$ be a subgroup of $I_K(\mathfrak{n})$ satisfying
$P_{K,\,1}(\mathfrak{n})\subseteq P\subseteq P_K(\mathfrak{n})$.
Each subgroup $\Gamma$ of $\mathrm{SL}_2(\mathbb{Z})$ defines
an equivalence relation $\sim_\Gamma$ on
the set $\mathcal{Q}_N(d_K)$
described in (\ref{QNdK}) in the same manner as in (\ref{simG}).
In this section, we shall present
a necessary and sufficient condition for $\Gamma$ in such a way that
\begin{eqnarray*}
\phi_\Gamma:\mathcal{Q}_N(d_K)/\sim_\Gamma&\rightarrow&
I_K(\mathfrak{n})/P\\
\left[Q\right]~~~&\mapsto&[[\omega_Q,\,1]]
\end{eqnarray*}
becomes a well-defined bijection with $\omega_Q$ as in (\ref{wQ}). As mentioned in $\S$1, the lattice $[\omega_Q,\,1]=\mathbb{Z}\omega_Q+\mathbb{Z}$ is
a fractional ideal of $K$.
\par
The modular group $\mathrm{SL}_2(\mathbb{Z})$ acts on $\mathbb{H}$
from the left by fractional linear transformations. For each $Q\in\mathcal{Q}(d_K)$, let
$I_{\omega_Q}$ denote the isotropy subgroup of the point
$\omega_Q$ in $\mathrm{SL}_2(\mathbb{Z})$. In particular, if we let $Q_0$ be the principal form in $\mathcal{Q}(d_K)$
(\cite[p. 31]{Cox}), then we have $\omega_{Q_0}=\tau_K$ and
\begin{equation}\label{isotropy}
I_{\omega_{Q_0}}=\left\{
\begin{array}{ll}
\left\{\pm I_2\right\} & \textrm{if}~d_K\neq-4,\,-3,\vspace{0.1cm}\\
\left\{\pm I_2,\,\pm S\right\}
& \textrm{if}~d_K=-4,\vspace{0.1cm}\\
\left\{\pm I_2,\,\pm ST,\,
\pm(ST)^2\right\}
& \textrm{if}~d_K=-3
\end{array}
\right.
\end{equation}
where $S=\begin{bmatrix}
0&-1\\1&0\end{bmatrix}$ and $T=\begin{bmatrix}1&1\\
0&1\end{bmatrix}$.
Furthermore, we see that
\begin{equation}\label{otherisotropy}
I_{\omega_Q}=\{\pm I_2\}\quad
\textrm{if $\omega_Q$ is not equivalent to $\omega_{Q_0}$
under $\mathrm{SL}_2(\mathbb{Z})$}
\end{equation}
(\cite[Proposition 1.5 (c)]{Silverman}).
For any $\gamma=\begin{bmatrix}a&b\\c&d\end{bmatrix}\in
\mathrm{SL}_2(\mathbb{Z})$, let
\begin{equation*}
j(\gamma,\,\tau)=c\tau+d\quad(\tau\in\mathbb{H}).
\end{equation*}
One can readily check that if $Q'=Q^\gamma$, then
\begin{equation*}
\omega_Q=\gamma(\omega_{Q'})\quad\textrm{and}
\quad[\omega_Q,\,1]=\frac{1}{j(\gamma,\,\omega_{Q'})}
[\omega_{Q'},\,1].
\end{equation*}

\begin{lemma}\label{primein}
Let $Q=ax^2+bxy+cy^2\in\mathcal{Q}(d_K)$. Then
$\mathrm{N}_{K/\mathbb{Q}}([\omega_Q,\,1])=1/a$ and
\begin{equation*}
[\omega_Q,\,1]\in I_K(\mathfrak{n})\quad\Longleftrightarrow\quad
Q\in\mathcal{Q}_N(d_K).
\end{equation*}
\end{lemma}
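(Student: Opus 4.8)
The plan is to first establish the norm formula by clearing denominators and passing to an honest integral ideal, and then to read off both directions of the equivalence from it — the forward implication coming essentially for free, the reverse one needing a separate B\'ezout argument.

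For the norm, I would set $\mathfrak{a}=a[\omega_Q,\,1]=[\tfrac{-b+\sqrt{d_K}}{2},\,a]$. Since $b^2-4ac=d_K$ forces $b\equiv d_K\Mod{2}$, the element $\tfrac{-b+\sqrt{d_K}}{2}$ lies in $\mathcal{O}_K=[\tau_K,\,1]$, so $\mathfrak{a}$ is contained in $\mathcal{O}_K$ and hence is an integral ideal (it is a fractional ideal because $[\omega_Q,\,1]$ is, as noted in the introduction). To compute $\mathrm{N}_{K/\mathbb{Q}}(\mathfrak{a})=[\mathcal{O}_K:\mathfrak{a}]$ I would express the basis $\{\tfrac{-b+\sqrt{d_K}}{2},\,a\}$ of $\mathfrak{a}$ in terms of $\{\tau_K,\,1\}$ and take the absolute value of the determinant. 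Splitting into the cases $d_K\equiv0\Mod{4}$ and $d_K\equiv1\Mod{4}$ (using $\sqrt{d_K}=2\tau_K$ and $\sqrt{d_K}=2\tau_K+1$ respectively, and the parity of $b$), the change-of-basis matrix is triangular with diagonal entries $1$ and $a$, so the index is $a$. Thus $\mathrm{N}(\mathfrak{a})=a$, and since $\mathfrak{a}=(a\mathcal{O}_K)[\omega_Q,\,1]$ with $\mathrm{N}(a\mathcal{O}_K)=a^2$, multiplicativity of the norm gives $\mathrm{N}([\omega_Q,\,1])=a/a^2=1/a$. Equivalently, one may read this off from covolumes, since $\mathrm{Im}(\omega_Q)=\sqrt{|d_K|}/(2a)$ and $\mathrm{Im}(\tau_K)=\sqrt{|d_K|}/2$, whose ratio is $1/a$.

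For the implication $Q\in\mathcal{Q}_N(d_K)\Rightarrow[\omega_Q,\,1]\in I_K(\mathfrak{n})$, assume $\gcd(N,\,a)=1$. Both $\mathfrak{a}$ and $a\mathcal{O}_K$ contain the rational integer $a$, so a B\'ezout relation $xa+yN=1$ places $1$ into $\mathfrak{a}+\mathfrak{n}$ and into $a\mathcal{O}_K+\mathfrak{n}$, whence both sums equal $\mathcal{O}_K$; that is, $\mathfrak{a},\,a\mathcal{O}_K\in I_K(\mathfrak{n})$. As $I_K(\mathfrak{n})$ is a group, $[\omega_Q,\,1]=\mathfrak{a}(a\mathcal{O}_K)^{-1}\in I_K(\mathfrak{n})$. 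For the converse I would argue contrapositively from the norm: if a prime $p$ divides both $N$ and $a$, then $v_p(\mathrm{N}([\omega_Q,\,1]))=-v_p(a)<0$, whereas any $\mathfrak{b}\in I_K(\mathfrak{n})$ satisfies $v_{\mathfrak{p}}(\mathfrak{b})=0$ for every prime $\mathfrak{p}$ over $p$ (since $p\mid N$), forcing $v_p(\mathrm{N}(\mathfrak{b}))=\sum_{\mathfrak{p}\mid p}f_{\mathfrak{p}}v_{\mathfrak{p}}(\mathfrak{b})=0$, a contradiction. The one point that demands care is precisely this asymmetry: having norm prime to $N$ is necessary but \emph{not} sufficient for membership in $I_K(\mathfrak{n})$ — a quotient $\mathfrak{p}/\overline{\mathfrak{p}}$ over a split prime has norm $1$ yet is not coprime to $\mathfrak{n}$ — so the reverse direction genuinely requires the direct B\'ezout argument rather than a norm count. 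Keeping the two implications on their separate footings is the main thing to get right; the remaining steps are routine verifications.
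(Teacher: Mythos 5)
Your proof is correct. The paper itself gives no argument here---it simply cites Lemma 2.3\,(iii) of the reference [E-K-S17]---so there is nothing internal to compare against, but your self-contained argument is the standard one and all steps check out: $a[\omega_Q,1]=[\tfrac{-b+\sqrt{d_K}}{2},\,a]$ is integral because $b\equiv d_K\Mod{2}$ puts $\tfrac{-b+\sqrt{d_K}}{2}$ in $\mathcal{O}_K$, the triangular change-of-basis matrix gives index $a$ (positive definiteness guarantees $a>0$, so no absolute value is needed), and multiplicativity yields $\mathrm{N}([\omega_Q,1])=a/a^2=1/a$. Your handling of the equivalence is also right, and the remark about the asymmetry is well taken: the B\'ezout argument shows the two integral ideals $a[\omega_Q,1]$ and $a\mathcal{O}_K$ are each coprime to $\mathfrak{n}$, which is genuinely stronger than a norm condition, while the converse direction is exactly a valuation count using that every prime above $p\mid N$ divides $\mathfrak{n}$. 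The only presentational caveat is that you lean on the paper's assertion that $[\omega_Q,1]$ is a fractional $\mathcal{O}_K$-ideal (so that $a[\omega_Q,1]$ is an ideal and not merely a lattice); that assertion is indeed made in the paper, so the reliance is legitimate, but a fully independent write-up would verify it by checking stability under multiplication by $a\omega_Q\in\mathcal{O}_K$.
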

\begin{proof}
See \cite[Lemma 2.3 (iii)]{E-K-S17}.
\end{proof}

\begin{lemma}\label{prime}
Let $Q=ax^2+bxy+cy^2\in\mathcal{Q}_N(d_K)$.
\begin{enumerate}
\item[\textup{(i)}]
For $u,\,v\in\mathbb{Z}$ not both zero, the fractional ideal $(u\omega_Q+v)\mathcal{O}_K$
is relatively prime to $\mathfrak{n}=N\mathcal{O}_K$ if and only if $\gcd(N,\,Q(v,\,-u))=1$.
\item[\textup{(ii)}] If $C\in P_K(\mathfrak{n})/P$, then
\begin{equation*}
C=[(u\omega_Q+v)\mathcal{O}_K]\quad
\textrm{for some}~u,\,v\in\mathbb{Z}~\textrm{not both zero such that}~
\gcd(N,\,Q(v,\,-u))=1.
\end{equation*}
\end{enumerate}
\end{lemma}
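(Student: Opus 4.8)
The plan is to prove each part by reducing the assertion to a statement about the quadratic form $Q$ evaluated at integer points, and then to exploit the explicit description of $[\omega_Q,1]$ as a fractional ideal. For part (i), the natural starting point is the identity
\[
(u\omega_Q+v)\mathcal{O}_K\cdot[\omega_Q,\,1]^{-1}
\]
and the norm computation from Lemma \ref{primein}, which gives $\mathrm{N}_{K/\mathbb{Q}}([\omega_Q,\,1])=1/a$. First I would compute the norm of the element $u\omega_Q+v$ directly from the definition $\omega_Q=(-b+\sqrt{d_K})/(2a)$, obtaining
\[
\mathrm{N}_{K/\mathbb{Q}}(u\omega_Q+v)=(u\omega_Q+v)(u\overline{\omega}_Q+v)
=\frac{1}{a}\,Q(v,\,-u),
\]
after expanding and using $a\omega_Q\overline{\omega}_Q=c$ and $\omega_Q+\overline{\omega}_Q=-b/a$. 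Since the fractional ideal $(u\omega_Q+v)\mathcal{O}_K$ is relatively prime to $\mathfrak{n}=N\mathcal{O}_K$ exactly when its norm is, as a fractional ideal of $\mathbb{Z}$, relatively prime to $N\mathbb{Z}$, and since $\gcd(N,a)=1$ because $Q\in\mathcal{Q}_N(d_K)$, the condition $\gcd(N,\mathrm{N}_{K/\mathbb{Q}}(u\omega_Q+v))=1$ becomes $\gcd(N,Q(v,-u)/a)=1$, equivalently $\gcd(N,Q(v,-u))=1$. The one subtlety here is relating coprimality of the principal fractional ideal to coprimality of its norm with $\mathfrak{n}$; this requires that $\mathfrak{n}$ be built from rational primes and that the extension be at worst ramified in a controlled way, so I would want to argue prime-by-prime, checking that a rational prime $p\mid N$ divides the ideal $(u\omega_Q+v)\mathcal{O}_K$ (i.e.\ some prime of $K$ above $p$ does) if and only if $p\mid Q(v,-u)$.

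For part (ii), the goal is surjectivity of the map sending $(u,v)$ with $\gcd(N,Q(v,-u))=1$ onto the quotient $P_K(\mathfrak{n})/P$. Given an arbitrary class $C\in P_K(\mathfrak{n})/P$, I would first lift it to a principal fractional ideal $\nu\mathcal{O}_K\in P_K(\mathfrak{n})$ representing $C$, where $\nu\in K^*$. Since $[\omega_Q,1]=\mathbb{Z}\omega_Q+\mathbb{Z}$ is a lattice and $K=\mathbb{Q}(\omega_Q)$, any element of $K$ can be written as $u\omega_Q+v$ up to a rational scalar; more precisely I would use $P_{K,1}(\mathfrak{n})\subseteq P$ to adjust the representative $\nu$ within its class modulo $P$, aiming to replace $\nu$ by an element of the form $u\omega_Q+v$ with $u,v\in\mathbb{Z}$ not both zero. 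The key approximation tool is a strong form of the approximation theorem or the Chinese Remainder Theorem applied modulo the primes dividing $N$: since $P_{K,1}(\mathfrak{n})\subseteq P$, two generators that are multiplicatively congruent to $1$ modulo $\mathfrak{n}$ give the same class, so I have freedom to multiply $\nu$ by any element $\equiv^*1\Mod{\mathfrak{n}}$. I would use this freedom to clear denominators and arrange an integral generator of the required shape, then invoke part (i) to guarantee that the resulting $(u,v)$ satisfies $\gcd(N,Q(v,-u))=1$ automatically, since $\nu\mathcal{O}_K$ was assumed coprime to $\mathfrak{n}$.

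The main obstacle I anticipate is in part (ii), namely producing a generator of the prescribed linear form $u\omega_Q+v$ with integer coefficients while staying in the correct class modulo $P$. The difficulty is that an arbitrary $\nu\in K^*$ generating a class in $P_K(\mathfrak{n})/P$ need not lie in the lattice $[\omega_Q,1]$, and scaling by a rational number to land inside the lattice may destroy the coprimality-to-$\mathfrak{n}$ condition or move the class. The resolution should come from the hypothesis $P_{K,1}(\mathfrak{n})\subseteq P$ combined with an approximation argument: I would choose, via the Chinese Remainder Theorem across the primes of $K$ dividing $N$, an auxiliary element $\lambda\equiv^*1\Mod{\mathfrak{n}}$ so that $\lambda\nu$ becomes an integral linear combination $u\omega_Q+v$ with $u,v$ not both zero and still coprime to $\mathfrak{n}$ in the sense of part (i). Verifying that such $\lambda$ exists—that the congruence conditions at the finitely many bad primes are simultaneously solvable—is the technical heart of the argument, and I expect it to rely on the flexibility afforded by having $P_{K,1}(\mathfrak{n})$ (rather than a larger subgroup) contained in $P$.
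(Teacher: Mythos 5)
Your proposal is workable but takes a genuinely different, and in part (ii) considerably heavier, route than the paper. For (i) the paper simply cites \cite[Lemma 4.1]{E-K-S17}, so your self-contained norm computation $\mathrm{N}_{K/\mathbb{Q}}(u\omega_Q+v)=Q(v,-u)/a$ is welcome; the subtlety you flag is real (for a general fractional ideal, coprimality to $\mathfrak{n}$ is \emph{not} equivalent to its norm being coprime to $N$ -- think of $\mathfrak{p}\overline{\mathfrak{p}}^{-1}$ for a split $p\mid N$), and the clean way to discharge it is to pass to the \emph{integral} ideal $a(u\omega_Q+v)\mathcal{O}_K$ of norm $aQ(v,-u)$, using $\gcd(N,a)=1$; for integral ideals the norm criterion is valid prime by prime. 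For (ii) the paper's argument is a two-liner that sidesteps your ``technical heart'' entirely: every class of $I_K(\mathfrak{n})/P$ contains an \emph{integral} ideal (\cite[Lemma 2.3 in Chapter IV]{Janusz}); for $C\in P_K(\mathfrak{n})/P$ that integral ideal is principal, say $\mathfrak{c}=\nu\mathcal{O}_K$ with $\nu\in\mathcal{O}_K$, and since $\mathcal{O}_K=[a\omega_Q,\,1]$ one automatically has $\nu=k(a\omega_Q)+v=u\omega_Q+v$ with $u=ka$, $v\in\mathbb{Z}$; part (i) then forces $\gcd(N,Q(v,-u))=1$. In other words, the observation that the lattice $[\omega_Q,1]$ contains $\mathcal{O}_K$ (indeed $\mathcal{O}_K=[a\omega_Q,1]$) makes any integral generator already of the required shape, so no approximation or multiplication by $\lambda\equiv^*1\Mod{\mathfrak{n}}$ is needed. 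Your CRT/approximation plan can be pushed through (choose $\lambda\equiv^*1\Mod{\mathfrak{n}}$ in the fractional ideal $\nu^{-1}[\omega_Q,1]$, which is coprime to $\mathfrak{n}$ by Lemma \ref{primein}), but as written it is only a sketch of the hardest step, and it also does not use the hypothesis $P_{K,1}(\mathfrak{n})\subseteq P$ in the way you suggest -- the paper's proof of (ii) never invokes that hypothesis at all, since it works with an actual integral ideal in the class rather than adjusting a generator within the class.
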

\begin{proof}
\begin{enumerate}
\item[(i)] See \cite[Lemma 4.1]{E-K-S17}
\item[(ii)] Since $P_K(\mathfrak{n})/P$ is a finite group, one can take an integral ideal
$\mathfrak{c}$ in the class $C$ (\cite[Lemma 2.3 in Chapter IV]{Janusz}).
Furthermore, since $\mathcal{O}_K=[a\omega_Q,\,1]$, we may express $\mathfrak{c}$ as
\begin{equation*}
\mathfrak{c}=(ka\omega_Q+v)\mathcal{O}_K\quad
\textrm{for some}~k,\,v\in\mathbb{Z}.
\end{equation*}
If we set $u=ka$, then we attain (ii) by (i).
\end{enumerate}
\end{proof}

\begin{proposition}\label{surjective}
If the map $\phi_\Gamma$ is well defined, then it is surjective.
\end{proposition}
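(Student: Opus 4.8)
The plan is to prove the statement by showing that the underlying set map $\mathcal{Q}_N(d_K)\to I_K(\mathfrak{n})/P$, $Q\mapsto[[\omega_Q,\,1]]$, is surjective; its image is literally $\{[[\omega_Q,\,1]]~|~Q\in\mathcal{Q}_N(d_K)\}$, which involves neither $\Gamma$ nor the well-definedness of $\phi_\Gamma$, so this is the entire content of the proposition. I would organize the argument along the filtration $P\subseteq P_K(\mathfrak{n})\subseteq I_K(\mathfrak{n})$. First, by the isomorphism (\ref{CDCO}) together with the fact that every ideal class of $K$ contains an integral ideal prime to $\mathfrak{n}$ (\cite[Lemma 2.3 in Chapter IV]{Janusz}), for each class $\overline{C}\in I_K(\mathfrak{n})/P_K(\mathfrak{n})\cong\mathrm{C}(\mathcal{O}_K)$ I would select a base form $Q_0\in\mathcal{Q}_N(d_K)$ (its leading coefficient being the norm of such an integral ideal, hence prime to $N$) for which $[[\omega_{Q_0},\,1]]$ projects onto $\overline{C}$. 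It then remains to cover each fiber, namely to show that as $Q$ varies over $\mathcal{Q}_N(d_K)$ with $[[\omega_Q,\,1]]$ lying over $\overline{C}$, the classes $[[\omega_Q,\,1]]$ exhaust the full coset $(P_K(\mathfrak{n})/P)\cdot[[\omega_{Q_0},\,1]]$.

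To move inside a fiber I would act on $Q_0$ by $\mathrm{SL}_2(\mathbb{Z})$. For $\gamma=\begin{bmatrix}a&b\\c&d\end{bmatrix}\in\mathrm{SL}_2(\mathbb{Z})$ the form $Q_0^\gamma$ has leading coefficient $Q_0(a,\,c)$, so $Q_0^\gamma\in\mathcal{Q}_N(d_K)$ exactly when $\gcd(N,\,Q_0(a,\,c))=1$; and from the transformation law recorded just before Lemma \ref{primein}, together with the cocycle identity $j(\gamma,\,\gamma^{-1}\tau)\,j(\gamma^{-1},\,\tau)=1$, one finds $j(\gamma,\,\omega_{Q_0^\gamma})=(a-c\omega_{Q_0})^{-1}$, whence $[\omega_{Q_0^\gamma},\,1]=(a-c\omega_{Q_0})^{-1}[\omega_{Q_0},\,1]$ and therefore $[[\omega_{Q_0^\gamma},\,1]]=[(a-c\omega_{Q_0})\mathcal{O}_K]^{-1}\,[[\omega_{Q_0},\,1]]$ in $I_K(\mathfrak{n})/P$. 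Since the first column $(a,\,c)$ of an element of $\mathrm{SL}_2(\mathbb{Z})$ is an arbitrary coprime pair, the fiber over $\overline{C}$ is covered once I show that the principal classes $[(a-c\omega_{Q_0})\mathcal{O}_K]$, taken over coprime pairs $(a,\,c)$ with $\gcd(N,\,Q_0(a,\,c))=1$, exhaust $P_K(\mathfrak{n})/P$. By Lemma \ref{prime}(ii) (with $u=-c$ and $v=a$, so that $Q_0(v,\,-u)=Q_0(a,\,c)$) this is true if the coprimality of $(a,\,c)$ is dropped, so the whole difficulty is concentrated in upgrading the representing pair to a coprime one.

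This coprimality upgrade is the step I expect to be the main obstacle, and I would handle it after reducing to the worst case $P=P_{K,\,1}(\mathfrak{n})$, where $P_K(\mathfrak{n})/P_{K,\,1}(\mathfrak{n})\cong(\mathcal{O}_K/\mathfrak{n})^*/\overline{\mathcal{O}_K^*}$ by the standard description of the ray class group. Writing $\theta=a_0\omega_{Q_0}=(-b_0+\sqrt{d_K})/2$, where $a_0,b_0$ are the first two coefficients of $Q_0$, one has $\mathcal{O}_K=[\theta,\,1]=\mathbb{Z}[\theta]$ and $a_0$ prime to $N$. Since $a_0(u\omega_{Q_0}+v)=u\theta+a_0v\in\mathcal{O}_K$, the class of $(u\omega_{Q_0}+v)\mathcal{O}_K$ corresponds to $(u\theta+a_0v)\,a_0^{-1}\in(\mathcal{O}_K/\mathfrak{n})^*/\overline{\mathcal{O}_K^*}$; moreover the validity condition $\gcd(N,\,Q_0(v,\,-u))=1$ is exactly the requirement that $u\theta+a_0v$ be a unit modulo $\mathfrak{n}$, because $\mathrm{N}_{K/\mathbb{Q}}(u\theta+a_0v)=a_0Q_0(v,\,-u)$. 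Thus the target reduces to the statement that every unit $w\in(\mathcal{O}_K/\mathfrak{n})^*$ is represented as $u\theta+a_0v$ for some coprime integers $(u,\,v)$.

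To finish I would expand $w=t\theta+s$ with $t,s\in\mathbb{Z}/N\mathbb{Z}$ and, using that $a_0$ is invertible modulo $N$, seek integers $u\equiv t$ and $v\equiv a_0^{-1}s\pmod{N}$ with $\gcd(u,\,v)=1$. The only obstruction to such a primitive lift is a prime $p\mid N$ dividing both residues, and this cannot occur: if $p\mid t$ and $p\mid s$ then $w\in p\mathcal{O}_K$, so $w$ would fail to be a unit modulo $\mathfrak{n}$ (note $\mathfrak{n}=N\mathcal{O}_K\subseteq p\mathcal{O}_K$), contrary to assumption. Hence the residue vector is primitive modulo $N$, and by the standard fact that a vector primitive modulo $N$ lifts to a primitive integer vector, the desired coprime $(u,\,v)$ exists. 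Letting $w$ run over $(\mathcal{O}_K/\mathfrak{n})^*$ shows that the coprime classes fill $P_K(\mathfrak{n})/P_{K,\,1}(\mathfrak{n})$, hence $P_K(\mathfrak{n})/P$ for every intermediate $P$; combined with the choice of base forms $Q_0$ over each $\overline{C}$, this yields the surjectivity of $\phi_\Gamma$.
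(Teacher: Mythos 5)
Your proposal is correct and follows essentially the same route as the paper's proof: decompose $I_K(\mathfrak{n})/P$ along the projection onto $\mathrm{C}(\mathcal{O}_K)$ using base forms with leading coefficient prime to $N$, invoke Lemma \ref{prime} (ii), and then upgrade the representing pair $(u,\,v)$ to a coprime pair congruent to it modulo $N$ so that it becomes a row of an $\mathrm{SL}_2(\mathbb{Z})$ matrix. The only real difference is in that last step, which the paper dispatches in one line --- the quotient $(u\omega_{Q_i'}+v)/(\widetilde{u}\omega_{Q_i'}+\widetilde{v})$ is $\equiv^*1\Mod{\mathfrak{n}}$ and $P_{K,\,1}(\mathfrak{n})\subseteq P$ --- whereas you re-derive the same invariance through the isomorphism $P_K(\mathfrak{n})/P_{K,\,1}(\mathfrak{n})\simeq(\mathcal{O}_K/\mathfrak{n})^*/\overline{\mathcal{O}_K^*}$.
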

\begin{proof}
Let
\begin{equation*}
\rho: I_K(\mathfrak{n})/P
\rightarrow I_K(\mathcal{O}_K)/P_K(\mathcal{O}_K)
\end{equation*}
be the natural homomorphism.
Since $I_K(\mathfrak{n})/P_K(\mathfrak{n})$ is isomorphic to  $I_K(\mathcal{O}_K)/P_K(\mathcal{O}_K)$
(\cite[Proposition 1.5 in Chapter IV]{Janusz}),
the homomorphism $\rho$ is surjective.
Here, we refer to the following commutative diagram:
\begin{figure}[H]
\begin{equation*}
\xymatrixcolsep{2pc}\xymatrix{
& I_K(\mathfrak{n})/P \ar@{->>}[ld] \ar[rd]^{~\rho} &\\
I_K(\mathfrak{n})/P_K(\mathfrak{n}) \ar[rr]^\sim &  & I_K(\mathcal{O}_K)/P_K(\mathcal{O}_K)
}
\end{equation*}
\caption{A commutative diagram of ideal class groups}\label{diagram}
\end{figure}
\noindent Let
\begin{equation*}
Q_1,\,Q_2,\,\ldots,\,Q_h\quad(\in\mathcal{Q}(d_K))
\end{equation*}
be reduced forms which represent all distinct classes in
$\mathrm{C}(d_K)=\mathcal{Q}(d_K)/\sim$ (\cite[Theorem 2.8]{Cox}). Take
$\gamma_1,\,\gamma_2,\,\ldots,\,\gamma_h\in\mathrm{SL}_2(\mathbb{Z})$
so that
\begin{equation*}
Q_i'=Q_i^{\gamma_i}\quad(i=1,\,2,\,\ldots,\,h)
\end{equation*}
belongs to $\mathcal{Q}_N(d_K)$ (\cite[Lemmas 2.3 and 2.25]{Cox}).
Then we get
\begin{equation*}
I_K(\mathcal{O}_K)/P_K(\mathcal{O}_K)=
\{[\omega_{Q_i'},\,1]P_K(\mathcal{O}_K)~|~i=1,\,2,\,\ldots,\,h\}
\quad\textrm{and}\quad[\omega_{Q_i'},\,1]\in I_K(\mathfrak{n})
\end{equation*}
by the isomorphism given in (\ref{CDCO}) (when
$D=d_K)$ and Lemma \ref{primein}.
Moreover, since $\rho$ is a surjection with $\mathrm{Ker}(\rho)=P_K(\mathfrak{n})/P$, we obtain the decomposition
\begin{equation}\label{decomp}
I_K(\mathfrak{n})/P=(P_K(\mathfrak{n})/P)\cdot
\{[[\omega_{Q_i'},\,1]]\in I_K(\mathfrak{n})/P~|~i=1,\,2,\,\ldots,\,h\}.
\end{equation}
\par
Now, let $C\in I_K(\mathfrak{n})/P$. By the decomposition (\ref{decomp}) and Lemma \ref{prime} (ii) we may express $C$ as
\begin{equation}\label{C}
C=\left[\frac{1}{u\omega_{Q_i'}+v}\,[\omega_{Q_i'},\,1]\right]
\end{equation}
for some $i\in\{1,\,2,\,\ldots,\,h\}$ and $u,\,v\in\mathbb{Z}$
not both zero with $\gcd(N,\,Q_i'(v,\,-u))=1$. Take any
$\sigma=\begin{bmatrix}\mathrm{*}&\mathrm{*}\\
\widetilde{u}&\widetilde{v}\end{bmatrix}\in\mathrm{SL}_2(\mathbb{Z})$ such that
$\sigma\equiv\begin{bmatrix}\mathrm{*}&\mathrm{*}\\
u&v\end{bmatrix}\Mod{N M_2(\mathbb{Z})}$.
We then derive that
\begin{eqnarray*}
C&=&\left[\frac{u\omega_{Q_i'}+v}{\widetilde{u}\omega_{Q_i'}+\widetilde{v}}
\,\mathcal{O}_K\right]C\quad
\textrm{because}~
\frac{u\omega_{Q_i'}+v}{\widetilde{u}\omega_{Q_i'}+\widetilde{v}}
\equiv^*1\Mod{\mathfrak{n}}~\textrm{and}~P_{K,\,1}(\mathfrak{n})
\subseteq P\\
&=&\left[\frac{1}{\widetilde{u}\omega_{Q_i'}+\widetilde{v}}
\,[\omega_{Q_i'},\,1]\right]\quad\textrm{by (\ref{C})}\\
&=&\left[\frac{1}{j(\sigma,\,\omega_{Q_i'})}\,[\omega_{Q_i'},\,1]
\right]\\
&=&[[\sigma(\omega_{Q_i'}),\,1]].
\end{eqnarray*}
Thus, if we put $Q=Q_i'^{\sigma^{-1}}$, then we obtain
\begin{equation*}
C=[[\omega_Q,\,1]]=\phi_\Gamma([Q]).
\end{equation*}
This prove that $\phi_\Gamma$ is surjective.
\end{proof}

\begin{proposition}\label{injective}
The map $\phi_\Gamma$ is a well-defined injection if and only if $\Gamma$ satisfies the following property:
\begin{equation}\label{P}
\begin{array}{c}
\textrm{Let $Q\in\mathcal{Q}_N(d_K)$ and
$\gamma\in\mathrm{SL}_2(\mathbb{Z})$ such that
$Q^{\gamma^{-1}}\in\mathcal{Q}_N(d_K)$. Then,}\\
j(\gamma,\,\omega_Q)\mathcal{O}_K\in P~\Longleftrightarrow~
\gamma\in\Gamma\cdot I_{\omega_Q}.
\end{array}
\end{equation}
\end{proposition}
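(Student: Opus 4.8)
The plan is to show that property \eqref{P} is, after two short and essentially formal translations, literally the assertion that $\phi_\Gamma$ is simultaneously well defined and injective. First I would observe that the combined statement ``$\phi_\Gamma$ is a well-defined injection'' is equivalent to the biconditional
\[
Q\sim_\Gamma Q'\quad\Longleftrightarrow\quad [[\omega_Q,\,1]]=[[\omega_{Q'},\,1]]\ \textrm{in}\ I_K(\mathfrak{n})/P
\]
holding for all $Q,\,Q'\in\mathcal{Q}_N(d_K)$ (the forward implication being well-definedness, the backward one injectivity). Next I would note that either side can hold only when $Q$ and $Q'$ are properly equivalent under $\mathrm{SL}_2(\mathbb{Z})$: the left side because $\Gamma\subseteq\mathrm{SL}_2(\mathbb{Z})$, and the right side because applying the surjection $\rho$ of Proposition \ref{surjective} (valid since $P\subseteq P_K(\mathfrak{n})\subseteq P_K(\mathcal{O}_K)$) sends an equality of classes in $I_K(\mathfrak{n})/P$ to an equality of classes in $I_K(\mathcal{O}_K)/P_K(\mathcal{O}_K)$, whence $Q\sim Q'$ by the isomorphism \eqref{CDCO}. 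Thus it suffices to verify the biconditional on properly equivalent pairs, which I parametrise by writing $Q'=Q^{\gamma^{-1}}$ with $Q,\,Q'\in\mathcal{Q}_N(d_K)$ and $\gamma\in\mathrm{SL}_2(\mathbb{Z})$ --- exactly the range of $Q$ and $\gamma$ occurring in \eqref{P}.

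Second I would carry out the ideal-theoretic translation. With $Q'=Q^{\gamma^{-1}}$, the transformation formula recorded just before Lemma \ref{primein} gives $[\omega_{Q'},\,1]=\tfrac{1}{j(\gamma,\,\omega_Q)}[\omega_Q,\,1]$, so that $j(\gamma,\,\omega_Q)\mathcal{O}_K=[\omega_Q,\,1]\,[\omega_{Q'},\,1]^{-1}$ already lies in $I_K(\mathfrak{n})$ by Lemma \ref{primein}. Since $P$ is a subgroup of $I_K(\mathfrak{n})$, reading that identity modulo $P$ yields
\[
[[\omega_Q,\,1]]=[[\omega_{Q'},\,1]]\quad\Longleftrightarrow\quad j(\gamma,\,\omega_Q)\mathcal{O}_K\in P,
\]
which is the left-hand condition of \eqref{P}.

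Third I would carry out the group-action translation. The auxiliary fact here is that $Q\mapsto\omega_Q$ is injective on forms of a fixed discriminant (the real and imaginary parts of $\omega_Q$ recover $a$, $b$, hence $c$), so the stabiliser of $Q$ under the right action of $\mathrm{SL}_2(\mathbb{Z})$ on $\mathcal{Q}(d_K)$ equals the isotropy group $I_{\omega_Q}$. Unwinding the definition, $Q\sim_\Gamma Q'$ means $Q'=Q^\delta$ for some $\delta\in\Gamma$; combining this with $Q'=Q^{\gamma^{-1}}$ forces $\delta\gamma\in\mathrm{Stab}(Q)=I_{\omega_Q}$, and intersecting the resulting coset with $\Gamma$ gives
\[
Q\sim_\Gamma Q'\quad\Longleftrightarrow\quad \gamma\in\Gamma\cdot I_{\omega_Q},
\]
the right-hand condition of \eqref{P}. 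Assembling the three steps, the biconditional characterising ``well-defined injection'' holds for all admissible $Q,\,\gamma$ precisely when $j(\gamma,\,\omega_Q)\mathcal{O}_K\in P$ and $\gamma\in\Gamma\cdot I_{\omega_Q}$ are equivalent for all admissible $Q,\,\gamma$, i.e.\ precisely when \eqref{P} holds.

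The individual steps are short, so I expect the difficulty to be bookkeeping rather than depth. The two delicate points are: (a) keeping the direction of the right action straight, so that the correct automorphy factor $j(\gamma,\,\omega_Q)$ (not $j(\gamma,\,\omega_{Q'})$), the correct isotropy group $I_{\omega_Q}$ (not $I_{\omega_{Q'}}$), and the correct side of the coset $\Gamma\cdot I_{\omega_Q}$ all emerge --- this is handled cleanly by working throughout with $\mathrm{Stab}(Q)=I_{\omega_Q}$ and the identity $Q^{\delta\gamma}=Q$; and (b) justifying rigorously that neither relation can identify a pair lying in two distinct proper $\mathrm{SL}_2(\mathbb{Z})$-classes, so that checking the biconditional on properly equivalent pairs is genuinely sufficient --- this is settled by the map $\rho$ and the commutative diagram of Proposition \ref{surjective}.
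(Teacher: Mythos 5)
Your proof is correct and rests on exactly the same ingredients as the paper's: the transformation formula $[\omega_{Q'},\,1]=j(\gamma,\,\omega_Q)^{-1}[\omega_Q,\,1]$ together with Lemma \ref{primein} to convert equality of classes modulo $P$ into $j(\gamma,\,\omega_Q)\mathcal{O}_K\in P$, the isomorphism (\ref{CDCO}) to reduce to properly equivalent pairs, and the identification of the $\mathrm{SL}_2(\mathbb{Z})$-stabilizer of $Q$ with $I_{\omega_Q}$. The only difference is organizational --- you match the two universally quantified biconditionals term by term, whereas the paper runs a chain of equivalences for the forward implication and handles well-definedness and injectivity separately for the converse --- so the mathematical content is identical.
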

\begin{proof}
Assume first that $\phi_\Gamma$ is a well-defined injection. Let
$Q\in\mathcal{Q}_N(d_K)$ and $\gamma\in\mathrm{SL}_2(\mathbb{Z})$
such that $Q^{\gamma^{-1}}\in\mathcal{Q}_N(d_K)$.
If we set $Q'=Q^{\gamma^{-1}}$, then we have
$Q=Q'^\gamma$ and so
\begin{equation}\label{wjw1}
[\omega_{Q'},\,1]=[\gamma(\omega_Q),\,1]=\frac{1}{j(\gamma,\,\omega_Q)}[\omega_Q,\,1].
\end{equation}
And, we deduce that
\begin{eqnarray*}
j(\gamma,\,\omega_Q)\mathcal{O}_K\in P&\Longleftrightarrow&
[[\omega_Q,\,1]]=
[[\omega_{Q'},\,1]]~\textrm{in}~I_K(\mathfrak{n})/P~
\textrm{by Lemma \ref{primein} and (\ref{wjw1})}\\
&\Longleftrightarrow&\phi_\Gamma([Q])=\phi_\Gamma([Q'])~
\textrm{by the definition of $\phi_\Gamma$}\\
&\Longleftrightarrow&[Q]=[Q']~\textrm{in
$\mathcal{Q}_N(d_K)/\sim_\Gamma$ since $\phi_\Gamma$ is injective}\\
&\Longleftrightarrow&Q'=Q^\alpha~\textrm{for some}~\alpha\in\Gamma\\
&\Longleftrightarrow&Q=Q^{\alpha\gamma}~\textrm{for some}~
\alpha\in\Gamma~\textrm{because}~Q'=Q^{\gamma^{-1}}\\
&\Longleftrightarrow&\alpha\gamma\in I_{\omega_Q}~
\textrm{for some}~\alpha\in\Gamma\\
&\Longleftrightarrow&\gamma\in\Gamma\cdot I_{\omega_Q}.
\end{eqnarray*}
Hence $\Gamma$ satisfies the property (\ref{P}).
\par
Conversely, assume that $\Gamma$ satisfies the property (\ref{P}).
To show that $\phi_\Gamma$ is well defined, suppose that
\begin{equation*}
[Q]=[Q']\quad\textrm{in}~\mathcal{Q}_N(d_K)/\sim_\Gamma
~\textrm{for some}~Q,\,Q'\in\mathcal{Q}_N(d_K).
\end{equation*}
Then we attain $Q=Q'^\alpha$ for some $\alpha\in\Gamma$ so that
\begin{equation}\label{wjw2}
[\omega_{Q'},\,1]=[\alpha(\omega_Q),\,1]=\frac{1}{j(\alpha,\,\omega_Q)}[\omega_Q,\,1].
\end{equation}
Now that $Q^{\alpha^{-1}}=Q'\in\mathcal{Q}_N(d_K)$
and $\alpha\in\Gamma\subseteq\Gamma\cdot
I_{\omega_Q}$, we achieve by the property (\ref{P}) that $j(\alpha,\,\omega_{Q})\mathcal{O}_K\in P$.
Thus we derive by
Lemma \ref{primein} and (\ref{wjw2}) that
\begin{equation*}
[[\omega_Q,\,1]]=[[\omega_{Q'},\,1]]
\quad\textrm{in}~I_K(\mathfrak{n})/P,
\end{equation*}
which claims that $\phi_\Gamma$ is well defined.
\par
On the other hand, in order to show that $\phi_\Gamma$ is injective, assume that
\begin{equation*}
\phi_\Gamma([Q])=\phi_\Gamma([Q'])\quad
\textrm{for some}~Q,\,Q'\in\mathcal{Q}_N(d_K).
\end{equation*}
Then we get
\begin{equation}\label{wlw}
[\omega_Q,\,1]=\lambda[\omega_{Q'},\,1]\quad
\textrm{for some}~\lambda\in K^*~\textrm{such that}~
\lambda\mathcal{O}_K\in P,
\end{equation}
from which it follows that
\begin{equation}\label{QQb}
Q=Q'^\gamma\quad\textrm{for some}~\gamma\in\mathrm{SL}_2(\mathbb{Z})
\end{equation}
by the isomorphism in (\ref{CDCO}) when $D=d_K$.
We then
derive by (\ref{wlw}) and (\ref{QQb}) that
\begin{equation*}
[\omega_{Q'},\,1]=[\gamma(\omega_Q),\,1]=
\frac{1}{j(\gamma,\,\omega_{Q})}
[\omega_Q,\,1]=\frac{\lambda}{j(\gamma,\,\omega_Q)}[\omega_{Q'},\,1]
\end{equation*}
and so $\lambda/j(\gamma,\,\omega_Q)\in\mathcal{O}_K^*$. Therefore we attain
\begin{equation*}
j(\gamma,\,\omega_Q)\mathcal{O}_K=\lambda\mathcal{O}_K\in P,
\end{equation*}
and hence $\gamma\in\Gamma\cdot I_{\omega_Q}$ by the
fact $Q^{\gamma^{-1}}=Q'\in\mathcal{Q}_N(d_K)$ and the property (\ref{P}). If we write
\begin{equation*}
\gamma=\alpha\beta\quad\textrm{for some}~\alpha\in\Gamma~
\textrm{and}~\beta\in I_{\omega_Q},
\end{equation*}
then we see by (\ref{QQb}) that
\begin{equation*}
Q=Q^{\beta^{-1}}=Q^{\gamma^{-1}\alpha}=Q'^\alpha.
\end{equation*}
This shows that
\begin{equation*}
[Q]=[Q']\quad\textrm{in}~\mathcal{Q}_N(d_K)/\sim_\Gamma,
\end{equation*}
which proves the injectivity of $\phi_\Gamma$.
\end{proof}

\begin{theorem}\label{formclassgroup}
The map $\phi_\Gamma$ is a well-defined bijection if and only
if $\Gamma$ satisfies the property \textup{(\ref{P})} stated in
\textup{Proposition \ref{injective}}. In this
case, we may regard the set $\mathcal{Q}_N(d_K)/\sim_\Gamma$
as a group isomorphic to the ideal class group
$I_K(\mathfrak{n})/P$.
\end{theorem}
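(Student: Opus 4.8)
The plan is to deduce this theorem directly from the two preceding propositions; essentially no new argument is required beyond assembling them in the correct logical order and then performing a routine transport of structure.

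First I would establish the equivalence ``$\phi_\Gamma$ is a well-defined bijection $\iff$ $\Gamma$ satisfies \textup{(\ref{P})}''. For the forward implication, observe that a well-defined bijection is in particular a well-defined injection, so \textup{Proposition \ref{injective}} immediately yields that $\Gamma$ satisfies \textup{(\ref{P})}. For the converse, suppose $\Gamma$ satisfies \textup{(\ref{P})}. Then \textup{Proposition \ref{injective}} tells us that $\phi_\Gamma$ is a well-defined injection. The key point is that well-definedness is now in hand, and this is precisely the hypothesis of \textup{Proposition \ref{surjective}}; applying that proposition gives surjectivity. Combining injectivity and surjectivity, $\phi_\Gamma$ is a bijection, as desired.

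The second assertion is then a formal transport of the group law along the bijection. Since $I_K(\mathfrak{n})/P$ is a group (the quotient of the abelian group $I_K(\mathfrak{n})$ by its subgroup $P$) and $\phi_\Gamma$ is a bijection, I would pull the multiplication back to $\mathcal{Q}_N(d_K)/\sim_\Gamma$ by declaring $[Q_1]\cdot[Q_2]=\phi_\Gamma^{-1}\bigl(\phi_\Gamma([Q_1])\,\phi_\Gamma([Q_2])\bigr)$. With this operation $\mathcal{Q}_N(d_K)/\sim_\Gamma$ becomes a group, and $\phi_\Gamma$ is by construction a group isomorphism onto $I_K(\mathfrak{n})/P$; the group axioms transfer automatically through any bijection, so nothing further need be verified.

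The only subtlety worth flagging is the logical ordering in the converse direction: \textup{Proposition \ref{surjective}} presupposes that $\phi_\Gamma$ is well defined, so I must first extract well-definedness from \textup{Proposition \ref{injective}} before invoking surjectivity rather than appealing to surjectivity unconditionally. Beyond this bit of bookkeeping there is no genuine obstacle, as all of the substantive content---the well-definedness and injectivity criterion encoded in \textup{(\ref{P})}, and the surjectivity---has already been proved.
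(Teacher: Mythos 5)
Your proposal is correct and matches the paper's own proof, which likewise combines Proposition \ref{injective} (for the equivalence of well-defined injectivity with property (\ref{P})) with Proposition \ref{surjective} (for surjectivity once well-definedness is in hand) and then transports the group structure of $I_K(\mathfrak{n})/P$ through the bijection $\phi_\Gamma$. Your remark about the logical ordering --- that well-definedness must be extracted before invoking surjectivity --- is exactly the right point of care.
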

\begin{proof}
We achieve the first assertion by Propositions \ref{surjective} and \ref{injective}.
Thus, in this case,
one can give a group structure on $\mathcal{Q}_N(d_K)/\sim_\Gamma$
through the bijection $\phi_\Gamma:
\mathcal{Q}_N(d_K)/\sim_\Gamma\rightarrow I_K(\mathfrak{n})/P$.
\end{proof}

\begin{remark}
By using the isomorphism given in (\ref{CDCO}) (when $D=d_K$) and Theorem \ref{formclassgroup}
we obtain the following commutative diagram:
\begin{figure}[h]
\begin{equation*}
\xymatrixcolsep{9pc}\xymatrix{
\mathcal{Q}_N(d_K)/\sim_\Gamma \ar[r]^\sim_{\phi_\Gamma}
\ar[d]_{\textrm{The natural map~}}&
I_K(\mathfrak{n})/P
\ar@{->>}[d]^{~\rho~\textrm{in Figure \ref{diagram}}}\\
\mathrm{C}(d_K)
\ar[r]^\sim_-{\textrm{The classical isomorphism in (\ref{CDCO})}} &
\mathrm{C}(\mathcal{O}_K)
}
\end{equation*}
\caption{The natural map between form class groups}\label{diagram2}
\end{figure}
\newpage
\noindent Therefore the natural map
$\mathcal{Q}_N(d_K)/\sim_\Gamma\rightarrow\mathrm{C}(d_K)$
is indeed a surjective homomorphism, which shows that
the group structure of $\mathcal{Q}_N(d_K)/\sim\Gamma$ is
not far from that of the classical form class group $\mathrm{C}(d_K)$.
\end{remark}

\section {Class field theory over imaginary quadratic fields}

In this section, we shall briefly review the class field theory over imaginary quadratic fields established by Shimura.
\par
For an imaginary quadratic field $K$, let $\mathbb{I}_K^\mathrm{fin}$ be the group of finite ideles of $K$ given by the restricted product
\begin{eqnarray*}
\mathbb{I}_K^\mathrm{fin}&=&{\prod_{\mathfrak{p}}}^\prime K_\mathfrak{p}^*\quad
\textrm{where $\mathfrak{p}$ runs
over all prime ideals of $\mathcal{O}_K$}\\
&=&\left\{s=(s_\mathfrak{p})\in\prod_\mathfrak{p}K_\mathfrak{p}^*~|~
s_\mathfrak{p}\in\mathcal{O}_{K_\mathfrak{p}}^*~\textrm{for
all but finitely many $\mathfrak{p}$}\right\}.
\end{eqnarray*}
As for the topology on $\mathbb{I}_K^\mathrm{fin}$
one can refer to \cite[p. 78]{Neukirch}.
Then, the classical class field theory of $K$ is explained by the exact sequence
\begin{equation*}
1\rightarrow K^*\rightarrow
\mathbb{I}_K^\mathrm{fin}\rightarrow\mathrm{Gal}(K^\mathrm{ab}/K)
\rightarrow 1
\end{equation*}
where $K^*$ maps into $\mathbb{I}_K^\mathrm{fin}$
through the diagonal embedding $\nu\mapsto(\nu,\,\nu,\,\nu,\,\ldots)$
(\cite[Chapter IV]{Neukirch}).
Setting
\begin{equation*}
\mathcal{O}_{K,\,p}=\mathcal{O}_K\otimes_\mathbb{Z}
\mathbb{Z}_p\quad\textrm{for each prime $p$}
\end{equation*}
we have
\begin{equation*}
\mathcal{O}_{K,\,p}
\simeq\prod_{\mathfrak{p}\,|\,p}
\mathcal{O}_{K_\mathfrak{p}}
\end{equation*}
(\cite[Proposition 4 in Chapter II]{Serre}).
Furthermore, if we let $\widehat{K}=K\otimes_\mathbb{Z}\widehat{\mathbb{Z}}$
with $\widehat{\mathbb{Z}}=\prod_p\mathbb{Z}_p$, then
\begin{eqnarray*}
\widehat{K}^*&=&{\prod_{p}}^\prime(K\otimes_\mathbb{Z}
\mathbb{Z}_p)^*\quad\textrm{where $p$ runs over all
rational primes}\\
&=&\left\{s=(s_p)\in\prod_p(K\otimes_\mathbb{Z}
\mathbb{Z}_p)^*~|~
s_p\in\mathcal{O}_{K,\,p}^*~
\textrm{for all but finitely many $p$}\right\}\\
&\simeq&\mathbb{I}_K^\mathrm{fin}
\end{eqnarray*}
(\cite[Exercise 15.12]{Cox} and \cite[Chapter II]{Serre}). Thus we may
use $\widehat{K}^*$ instead of $\mathbb{I}_K^\mathrm{fin}$ when we develop the class field theory of $K$.

\begin{proposition}\label{1-1}
There is a one-to-one correspondence via
the Artin map
between closed subgroups $J$ of $\widehat{K}^*$
of finite index containing $K^*$
and finite abelian extensions $L$ of $K$ such that
\begin{equation*}
\widehat{K}^*/J\simeq\mathrm{Gal}(L/K).
\end{equation*}
\end{proposition}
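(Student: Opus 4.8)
The plan is to deduce the statement formally from the exact sequence $1\rightarrow K^*\rightarrow\widehat{K}^*\rightarrow\mathrm{Gal}(K^\mathrm{ab}/K)\rightarrow1$ recalled above, combined with the fundamental theorem of infinite Galois theory. Write $A:\widehat{K}^*\rightarrow\mathrm{Gal}(K^\mathrm{ab}/K)$ for the Artin map and set $G=\mathrm{Gal}(K^\mathrm{ab}/K)$. Since $A$ is a continuous surjective homomorphism with $\mathrm{Ker}(A)=K^*$, it descends to a topological isomorphism $\overline{A}:\widehat{K}^*/K^*\stackrel{\sim}{\rightarrow}G$. First I would record that, because $G$ is profinite and hence compact, a closed subgroup of $G$ has finite index if and only if it is open.

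Next I would invoke infinite Galois theory for the abelian extension $K^\mathrm{ab}/K$: the maps $L\mapsto\mathrm{Gal}(K^\mathrm{ab}/L)$ and $H\mapsto(K^\mathrm{ab})^H$ give a bijection between intermediate fields $K\subseteq L\subseteq K^\mathrm{ab}$ and closed subgroups $H$ of $G$, under which the open (equivalently, finite-index closed) subgroups correspond precisely to the finite extensions $L/K$. As $K^\mathrm{ab}$ is the maximal abelian extension, every such $L$ is a finite abelian extension of $K$, and conversely every finite abelian extension of $K$ occurs as one of these $L$. Thus closed subgroups of $G$ of finite index correspond bijectively to finite abelian extensions $L$ of $K$, with $H=\mathrm{Gal}(K^\mathrm{ab}/L)$.

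Finally I would transport this correspondence through $\overline{A}$. By the correspondence theorem applied to the quotient map $A$, the assignment $H\mapsto J=A^{-1}(H)$ is a bijection between (closed) subgroups $H$ of $G$ and (closed) subgroups $J$ of $\widehat{K}^*$ containing $K^*=\mathrm{Ker}(A)$, and it preserves the index since $[\widehat{K}^*:J]=[G:H]$. Composing the two bijections yields the desired one-to-one correspondence between closed subgroups $J$ of $\widehat{K}^*$ of finite index containing $K^*$ and finite abelian extensions $L$ of $K$. Moreover, because $L/K$ is Galois, the third isomorphism theorem gives $\widehat{K}^*/J\simeq G/H=\mathrm{Gal}(K^\mathrm{ab}/K)/\mathrm{Gal}(K^\mathrm{ab}/L)\simeq\mathrm{Gal}(L/K)$, which is the asserted isomorphism.

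The substantive input is entirely contained in the recalled exact sequence, i.e.\ in Shimura's and Neukirch's class field theory; granting it, the proposition is formal. Accordingly, the only point demanding care is the topological bookkeeping: one must ensure that $\overline{A}$ is a homeomorphism, so that closedness is preserved in both directions, and that ``closed of finite index'' on the idele side matches ``open'' on the Galois side. Both reduce to the compactness of $G$ together with the fact that $\overline{A}$ is a topological isomorphism, so I expect this matching, rather than any deeper arithmetic, to be the main (and essentially only) obstacle.
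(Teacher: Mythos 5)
Your argument is correct, and it is worth noting that the paper itself gives no proof at all: it simply cites Neukirch, Chapter IV, where the statement appears as (a form of) the existence theorem of class field theory. What you do instead is derive the proposition formally from the exact sequence $1\rightarrow K^*\rightarrow\widehat{K}^*\rightarrow\mathrm{Gal}(K^\mathrm{ab}/K)\rightarrow 1$ that the paper displays just beforehand, combined with infinite Galois theory and the correspondence theorem; this is a legitimate and arguably more self-contained route, since the only arithmetic input is the exact sequence the paper has already granted. The one place where your ``topological bookkeeping'' should be tightened is the justification that $\overline{A}:\widehat{K}^*/K^*\rightarrow G$ is a homeomorphism: this does not follow from compactness of $G$, but from compactness of the \emph{domain} $\widehat{K}^*/K^*$ together with $G$ being Hausdorff (a continuous bijection from a compact space to a Hausdorff space is a homeomorphism). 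That compactness is where the hypothesis that $K$ is imaginary quadratic quietly enters: $K^*$ is discrete (hence closed) in $\widehat{K}^*$ because $K^*\cap\prod_\mathfrak{p}\mathcal{O}_{K_\mathfrak{p}}^*=\mathcal{O}_K^*$ is finite, and $\widehat{K}^*/K^*$ is an extension of the finite ideal class group by the compact group $\prod_\mathfrak{p}\mathcal{O}_{K_\mathfrak{p}}^*/\mathcal{O}_K^*$. Compactness of $G$ is used only for the (correct) identification of closed finite-index subgroups with open subgroups. With that repair your proof is complete, and it makes explicit what the paper leaves to the reference.
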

\begin{proof}
See \cite[Chapter IV]{Neukirch}.
\end{proof}

Let $N$ be a positive integer, $\mathfrak{n}=N\mathcal{O}_K$ and $s=(s_p)\in
\widehat{K}^*$.
For a prime $p$ and a prime ideal $\mathfrak{p}$ of $\mathcal{O}_K$ lying above $p$,
let $n_\mathfrak{p}(s)$ be a unique integer such that
$s_p\in
\mathfrak{p}^{n_\mathfrak{p}(s)}\mathcal{O}_{K_\mathfrak{p}}^*$.
We then regard $s\mathcal{O}_K$ as the fractional ideal
\begin{equation*}
s\mathcal{O}_K=
\prod_p\prod_{\mathfrak{p}\,|\,p}
\mathfrak{p}^{n_\mathfrak{p}(s)}\in I_K(\mathcal{O}_K).
\end{equation*}
By the approximation theorem
(\cite[Chapter IV]{Janusz}) one can take an element $\nu_s$ of $K^*$ such that
\begin{equation}\label{k}
\nu_ss_p\in 1+N\mathcal{O}_{K,\,p}\quad\textrm{for
all}~p\,|\,N.
\end{equation}

\begin{proposition}\label{rayidele}
We get a well-defined surjective homomorphism
\begin{eqnarray*}
\phi_\mathfrak{n}~:~\widehat{K}^*&\rightarrow& I_K(\mathfrak{n})/P_{K,\,1}(\mathfrak{n})\\
s~~&\mapsto&~~~[\nu_ss\mathcal{O}_K]
\end{eqnarray*}
with kernel
\begin{equation*}
J_\mathfrak{n}=
K^*\left(
\prod_{p\,|\,N}(1+N\mathcal{O}_{K,\,p})\times
\prod_{p\,\nmid\,N}\mathcal{O}_{K,\,p}^*\right).
\end{equation*}
Thus
$J_\mathfrak{n}$ corresponds to the ray class field $K_\mathfrak{n}$.
\end{proposition}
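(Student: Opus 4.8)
The plan is to verify in turn that $\phi_\mathfrak{n}$ is well defined, multiplicative and surjective, then to compute $\mathrm{Ker}(\phi_\mathfrak{n})$, and finally to read off the statement about $K_\mathfrak{n}$ from Proposition \ref{1-1}. The workhorse is the local dictionary that for $\mu\in K^*$ one has $\mu\equiv^*1\Mod{\mathfrak{n}}$ precisely when $\mu\in 1+N\mathcal{O}_{K,\,p}$ for every prime $p\mid N$, which I would obtain by comparing $\mathfrak{p}$-adic valuations at the primes dividing $\mathfrak{n}$; I would also use repeatedly that each $1+N\mathcal{O}_{K,\,p}$ is a multiplicative subgroup of $\mathcal{O}_{K,\,p}^*$.

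For well-definedness I would first note that (\ref{k}) forces $\nu_ss_p\in\mathcal{O}_{K,\,p}^*$ for $p\mid N$, hence $n_\mathfrak{p}(\nu_ss)=0$ for every $\mathfrak{p}\mid\mathfrak{n}$, so that $\nu_ss\mathcal{O}_K$ genuinely lies in $I_K(\mathfrak{n})$. If $\nu_s$ and $\nu_s'$ both satisfy (\ref{k}), then $\mu=\nu_s'/\nu_s=(\nu_s's_p)(\nu_ss_p)^{-1}$ lies in $1+N\mathcal{O}_{K,\,p}$ for all $p\mid N$ by the group property, so $\mu\mathcal{O}_K\in P_{K,\,1}(\mathfrak{n})$ and the class of $\nu_ss\mathcal{O}_K$ is unchanged. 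Multiplicativity is then immediate from additivity of $n_\mathfrak{p}$ and the fact that $\nu_s\nu_t$ is an admissible multiplier for $st$. For surjectivity I would represent a given class by an integral ideal $\mathfrak{a}$ prime to $\mathfrak{n}$ and take an idele $s$ with $s\mathcal{O}_K=\mathfrak{a}$ and $s_p=1$ for $p\mid N$, so that $\nu_s=1$ is admissible and $\phi_\mathfrak{n}(s)=[\mathfrak{a}]$.

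Setting $W=\prod_{p\mid N}(1+N\mathcal{O}_{K,\,p})\times\prod_{p\nmid N}\mathcal{O}_{K,\,p}^*$, so that $J_\mathfrak{n}=K^*W$, the inclusion $J_\mathfrak{n}\subseteq\mathrm{Ker}(\phi_\mathfrak{n})$ follows because a diagonal $\nu\in K^*$ admits $\nu_s=\nu^{-1}$ and any $s\in W$ admits $\nu_s=1$, in both cases producing the trivial ideal. Conversely, if $\phi_\mathfrak{n}(s)$ is trivial, write $\nu_ss\mathcal{O}_K=\mu\mathcal{O}_K$ with $\mu\equiv^*1\Mod{\mathfrak{n}}$; then $\mu^{-1}\nu_ss$ has trivial associated ideal and, by the dictionary, lies in $W$, whence $s=(\nu_s^{-1}\mu)\cdot(\mu^{-1}\nu_ss)\in K^*W=J_\mathfrak{n}$. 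This gives $\mathrm{Ker}(\phi_\mathfrak{n})=J_\mathfrak{n}$ and the isomorphism $\widehat{K}^*/J_\mathfrak{n}\simeq I_K(\mathfrak{n})/P_{K,\,1}(\mathfrak{n})$. Since $W$ is open, $J_\mathfrak{n}$ is an open, hence closed, subgroup of finite index containing $K^*$, so by Proposition \ref{1-1} together with $I_K(\mathfrak{n})/P_{K,\,1}(\mathfrak{n})\simeq\mathrm{Gal}(K_\mathfrak{n}/K)$ and the compatibility of the idelic and ideal-theoretic Artin maps, $J_\mathfrak{n}$ is the subgroup corresponding to $K_\mathfrak{n}$.

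The step I expect to be most delicate is the local--global translation between the idelic normalization (\ref{k}) and the multiplicative congruence cutting out $P_{K,\,1}(\mathfrak{n})$: both kernel inclusions rest on this equivalence and on the group structure of $1+N\mathcal{O}_{K,\,p}$, and one must check carefully that $\nu_ss\mathcal{O}_K$ is genuinely prime to $\mathfrak{n}$ and that admissible multipliers can be combined. The only other subtlety is the closing identification with $K_\mathfrak{n}$, which presupposes that the idelic Artin map of Proposition \ref{1-1} restricts to the ideal-theoretic Artin map modulo $\mathfrak{n}$.
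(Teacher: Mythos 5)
Your proposal is correct, and since the paper's own proof is just a citation to \cite[Exercises 15.17 and 15.18]{Cox}, what you have written is precisely the standard argument those exercises outline: the local--global dictionary between the normalization (\ref{k}) and the multiplicative congruence $\equiv^*1\Mod{\mathfrak{n}}$, the kernel computation via $K^*W$, and the identification with $K_\mathfrak{n}$ through the compatibility of the idelic and ideal-theoretic Artin maps. All the steps you flag as delicate are handled correctly, so nothing needs to be changed.
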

\begin{proof}
See \cite[Exercises 15.17 and 15.18]{Cox}.
\end{proof}

Let $\mathcal{F}_N$ be the field of meromorphic modular functions of level $N$
whose Fourier expansions with respect to
$q^{1/N}$
have coefficients in the $N$th cyclotomic field $\mathbb{Q}(\zeta_N)$
with $\zeta_N=e^{2\pi\mathrm{i}/N}$.
Then $\mathcal{F}_N$ is a Galois extension of $\mathcal{F}_1$
with $\mathrm{Gal}(\mathcal{F}_N/\mathcal{F}_1)\simeq
\mathrm{GL}_2(\mathbb{Z}/N\mathbb{Z})/\{\pm I_2\}$
(\cite[Chapter 6]{Shimura}).

\begin{proposition}\label{Galoisdecomposition}
There is a decomposition
\begin{equation*}
\mathrm{GL}_2(\mathbb{Z}/N\mathbb{Z})/\{\pm I_2\}
=\left\{\pm\begin{bmatrix}1&0\\0&d\end{bmatrix}~|~
d\in(\mathbb{Z}/N\mathbb{Z})^*\right\}/\{\pm I_2\}
\cdot\mathrm{SL}_2(\mathbb{Z}/N\mathbb{Z})/\{\pm I_2\}.
\end{equation*}
Let $h(\tau)$ be an element of $\mathcal{F}_N$ whose Fourier expansion is given by
\begin{equation*}
h(\tau)=\sum_{n\gg-\infty}c_nq^{n/N}\quad(c_n\in\mathbb{Q}(\zeta_N)).
\end{equation*}
\begin{enumerate}
\item[\textup{(i)}]
If $\alpha=\begin{bmatrix}1&0\\0&d\end{bmatrix}$ with
$d\in(\mathbb{Z}/N\mathbb{Z})^*$, then
\begin{equation*}
h(\tau)^\alpha=
\sum_{n\gg-\infty}c_n^{\sigma_d}q^{n/N}
\end{equation*}
where $\sigma_d$ is the automorphism of $\mathbb{Q}(\zeta_N)$
defined by $\zeta_N^{\sigma_d}=\zeta_N^d$.
\item[\textup{(ii)}]
If $\beta\in\mathrm{SL}_2(\mathbb{Z}/N\mathbb{Z})/\{\pm I_2\}$, then
\begin{equation*}
h(\tau)^\beta=h(\gamma(\tau))
\end{equation*}
where $\gamma$ is any
element of $\mathrm{SL}_2(\mathbb{Z})$ which
maps to $\beta$ through the reduction
$\mathrm{SL}_2(\mathbb{Z})\rightarrow
\mathrm{SL}_2(\mathbb{Z}/N\mathbb{Z})/\{\pm I_2\}$.
\end{enumerate}
\end{proposition}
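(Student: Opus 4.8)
The plan is to dispose of the group-theoretic decomposition first by an elementary argument with the determinant, and then to read off the two explicit descriptions of the Galois action directly from Shimura's theory, which supplies the isomorphism $\mathrm{Gal}(\mathcal{F}_N/\mathcal{F}_1)\simeq\mathrm{GL}_2(\mathbb{Z}/N\mathbb{Z})/\{\pm I_2\}$ already recalled just above the statement. For the decomposition I would use the determinant homomorphism $\det:\mathrm{GL}_2(\mathbb{Z}/N\mathbb{Z})\to(\mathbb{Z}/N\mathbb{Z})^*$, whose kernel is $\mathrm{SL}_2(\mathbb{Z}/N\mathbb{Z})$. Given any $g$ with $\det(g)=d$, the factorization $g=\begin{bmatrix}1&0\\0&d\end{bmatrix}\cdot\left(\begin{bmatrix}1&0\\0&d^{-1}\end{bmatrix}g\right)$ exhibits $g$ as a product of a diagonal matrix of the prescribed shape and a matrix of determinant $1$, so the diagonal matrices $\begin{bmatrix}1&0\\0&d\end{bmatrix}$ form a full set of coset representatives for $\mathrm{SL}_2(\mathbb{Z}/N\mathbb{Z})$ in $\mathrm{GL}_2(\mathbb{Z}/N\mathbb{Z})$. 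Passing to the quotient by $\{\pm I_2\}$ preserves this product, giving the asserted decomposition.

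For part (ii) I would note that the reduction map $\mathrm{SL}_2(\mathbb{Z})\to\mathrm{SL}_2(\mathbb{Z}/N\mathbb{Z})/\{\pm I_2\}$ is surjective, so $\beta$ lifts to some $\gamma\in\mathrm{SL}_2(\mathbb{Z})$ acting on $\mathcal{F}_N$ by precomposition $h\mapsto h\circ\gamma$; independence of the chosen lift is immediate, since any two lifts differ by an element of $\pm\Gamma(N)$, a level-$N$ modular function is invariant under $\Gamma(N)$, and $-I_2$ acts trivially on $\mathbb{H}$. For part (i) the diagonal matrix $\begin{bmatrix}1&0\\0&d\end{bmatrix}$ acts on a Fourier expansion by leaving each power $q^{n/N}$ fixed and transforming the coefficients through the cyclotomic automorphism $\sigma_d$ of $\mathbb{Q}(\zeta_N)$; this is precisely the explicit form of Shimura's isomorphism restricted to the diagonal factor, and it is consistent with the whole group fixing $\mathcal{F}_1$, whose coefficients are rational and hence untouched by $\sigma_d$.

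The substance of the proposition lies entirely in (i) and (ii), and the main obstacle is not a computation but the correct matching of conventions: one must confirm that the normalization of $\mathrm{Gal}(\mathcal{F}_N/\mathcal{F}_1)\simeq\mathrm{GL}_2(\mathbb{Z}/N\mathbb{Z})/\{\pm I_2\}$ adopted here is the one under which the diagonal factor acts coefficientwise by $\sigma_d$ and the special linear factor acts by fractional linear transformations. The genuine depth behind this is Shimura's reciprocity law, which encodes the coefficientwise cyclotomic action via the Galois action on the $N$-torsion and the associated Weil pairing; this is exactly what is cited from \cite[Chapter 6]{Shimura}. Once the convention is fixed to agree with that reference, (i) and (ii) are direct transcriptions of the explicit action, and the only point requiring a hand verification is the well-definedness in (ii), which I have indicated above.
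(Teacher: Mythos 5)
Your proposal is correct and ultimately rests on the same source as the paper, whose entire proof is a citation of \cite[Proposition 6.21]{Shimura}: the determinant factorization you give for the decomposition and the well-definedness check in (ii) are routine, and for the substantive content of (i) and (ii) you, like the authors, defer to Shimura's explicit description of $\mathrm{Gal}(\mathcal{F}_N/\mathcal{F}_1)$. No gap; you have merely made explicit the elementary parts that the citation leaves implicit.
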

\begin{proof}
See \cite[Proposition 6.21]{Shimura}.
\end{proof}

If we let $\widehat{\mathbb{Q}}=\mathbb{Q}\otimes_\mathbb{Z}\widehat{\mathbb{Z}}$
and $\displaystyle\mathcal{F}=\bigcup_{N=1}^\infty\mathcal{F}_N$,
then we attain the exact sequence
\begin{equation*}
1\rightarrow\mathbb{Q}^*\rightarrow\mathrm{GL}_2(\widehat{\mathbb{Q}})
\rightarrow\mathrm{Gal}(\mathcal{F}/\mathbb{Q})\rightarrow1
\end{equation*}
(\cite[Chaper 7]{Lang87} or \cite[Chapter 6]{Shimura}).
Here, we note that
\begin{eqnarray*}
\mathrm{GL}_2(\widehat{\mathbb{Q}})&=&
{\prod_p}^\prime\mathrm{GL}_2(\mathbb{Q}_p)
\quad\textrm{where $p$ runs over all
rational primes}\\
&=&
\{\gamma=(\gamma_p)\in\prod_p\mathrm{GL}_2(\mathbb{Q}_p)~|~
\gamma_p\in\mathrm{GL}_2(\mathbb{Z}_p)~
\textrm{for all but finitely many $p$}\}
\end{eqnarray*}
(\cite[Exercise 15.4]{Cox}) and $\mathbb{Q}^*$ maps into $\mathrm{GL}_2(\widehat{\mathbb{Q}})$ through the diagonal embedding.
For $\omega\in K\cap\mathbb{H}$, we define a normalized  embedding
\begin{equation*}
q_\omega:K^*\rightarrow\mathrm{GL}_2^+(\mathbb{Q})
\end{equation*}
by the relation
\begin{equation}\label{defq}
\nu\begin{bmatrix}\tau_K\\1\end{bmatrix}=
q_\omega(\nu)\begin{bmatrix}\tau_K\\1\end{bmatrix}\quad(\nu\in K^*).
\end{equation}
By continuity, $q_\omega$ can be extended to an embedding
\begin{equation*}
q_{\omega,\,p}:(K\otimes_\mathbb{Z}\mathbb{Z}_p)^*\rightarrow\mathrm{GL}_2(\mathbb{Q}_p)
\quad\textrm{for each prime $p$}
\end{equation*}
and hence to an embedding
\begin{equation*}
q_\omega:\widehat{K}^*\rightarrow\mathrm{GL}_2(\widehat{\mathbb{Q}}).
\end{equation*}
Let $\min(\tau_K,\,\mathbb{Q})=x^2+b_Kx+c_K$ ($\in\mathbb{Z}[x]$).
Since $K\otimes_\mathbb{Z}\mathbb{Z}_p=\mathbb{Q}_p\tau_K+\mathbb{Q}_p$
for each prime $p$,
one can deduce that if
$s=(s_p)\in\widehat{K}^*$ with
$s_p=u_p\tau_K+v_p$ ($u_p,\,v_p\in\mathbb{Q}_p$),
then
\begin{equation}\label{gamma_p}
q_{\tau_K}(s)=(\gamma_p)\quad\textrm{with}~\gamma_p=
\begin{bmatrix}v_p-b_Ku_p & -c_Ku_p\\u_p&v_p\end{bmatrix}.
\end{equation}
\par
By utilizing the concept of canonical models of modular curves, Shimura achieved the following remarkable results.

\begin{proposition}[Shimura's reciprocity law]\label{reciprocity}
Let $s\in\widehat{K}^*$, $\omega\in K\cap\mathbb{H}$ and
$h\in\mathcal{F}$ be finite at $\omega$. Then $h(\omega)$ lies
in $K^\mathrm{ab}$ and satisfies
\begin{equation*}
h(\omega)^{[s^{-1},\,K]}=h(\tau)^{q_\omega(s)}|_{\tau=\omega}
\end{equation*}
where $[\,\cdot,\,K]$ is the Artin map for $K$.
\end{proposition}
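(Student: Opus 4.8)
The plan is to recognize both sides of the asserted identity as two descriptions of the same Galois action and to match them through the embedding $q_\omega$. The essential input will be the main theorem of complex multiplication (in Shimura's idelic form, proved via canonical models of modular curves), which I treat as the deep external ingredient; everything else is a matter of reconciling the two exact sequences displayed above with the explicit formulas of Proposition \ref{Galoisdecomposition} and (\ref{gamma_p}).

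First I would realize $\omega$ as a CM point. The lattice $[\omega,\,1]$ defines an elliptic curve $E=\mathbb{C}/[\omega,\,1]$ with complex multiplication by an order in $K$, and for each $N$ the $N$-torsion $E[N]=\tfrac{1}{N}[\omega,\,1]/[\omega,\,1]$ carries the basis induced by $(\omega,\,1)$. Under the canonical model of the modular curve $X(N)$, an element $h\in\mathcal{F}_N$ finite at $\omega$ evaluates to a coordinate of the pair $(E,\,E[N]\text{-structure})$. The first assertion, that $h(\omega)\in K^{\mathrm{ab}}$, then follows from the main theorem of complex multiplication: values of the elliptic modular function generate ring class fields, and values of the higher-level functions---Weber-type functions of torsion points---generate ray class fields, all of which are abelian over $K$.

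For the reciprocity formula I would compute the effect of the Artin automorphism $[s^{-1},\,K]$ on this CM datum. The main theorem of complex multiplication identifies $[s^{-1},\,K]$ with an isogeny of $E$ onto its conjugate whose action on the Tate module is multiplication by the idele $s$, read off in the lattice basis $(\omega,\,1)$. Writing $s=(s_p)$ with $s_p=u_p\tau_K+v_p$, this action on $E[N]$ is exactly the reduction modulo $N$ of the matrix $q_\omega(s)$ of (\ref{gamma_p}); when $\omega\neq\tau_K$ one first moves $\omega$ to $\tau_K$ by an element of $\mathrm{SL}_2(\mathbb{Z})$ and absorbs it into the $\mathrm{SL}_2$-component. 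By Proposition \ref{Galoisdecomposition} this same matrix $q_\omega(s)\bmod N$ acts on $\mathcal{F}_N$ through its determinant part $\sigma_d$ (permuting Fourier coefficients in $\mathbb{Q}(\zeta_N)$) and its $\mathrm{SL}_2$-part (by $\tau\mapsto\gamma(\tau)$). The content of the reciprocity law is that applying $[s^{-1},\,K]$ to the value and applying $q_\omega(s)$ to the function and then evaluating produce the same element: the $\mathrm{SL}_2$-part matches the geometric transformation of the level structure, while the determinant part matches the induced cyclotomic action, since $\det\circ q_\omega$ recovers the norm from $K$ and hence the Galois action on the roots of unity occurring among the values. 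Taking the union over all $N$ promotes the identity from each $\mathcal{F}_N$ to $\mathcal{F}$.

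The hard part is the main theorem of complex multiplication itself---establishing that the idelic Galois action on the torsion of a CM elliptic curve is, in the correct normalization, multiplication by the idele; this is what genuinely requires the theory of canonical models (or an equivalent Kronecker/Lubin--Tate analysis) rather than formal manipulation. The second delicate point, and the one most easily mishandled, is the bookkeeping of normalizations: the appearance of $s^{-1}$ in the Artin symbol against $s$ in $q_\omega$, the passage to the quotients by $\mathbb{Q}^*$ and by $\{\pm I_2\}$, and the precise matching of the cyclotomic action with $\det\circ q_\omega$. Once the main CM theorem is in hand and these conventions are pinned down so that $q_\omega$ genuinely intertwines the class field theory exact sequence for $K$ with the exact sequence $1\to\mathbb{Q}^*\to\mathrm{GL}_2(\widehat{\mathbb{Q}})\to\mathrm{Gal}(\mathcal{F}/\mathbb{Q})\to1$, the identity follows.
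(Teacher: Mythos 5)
The paper does not actually prove this proposition: its ``proof'' is the single line ``See \cite[Theorem 6.31 (i)]{Shimura}'', so there is no internal argument to compare against. What you have written is a faithful outline of the architecture of the proof in the cited source: realize $\omega$ as a CM point, invoke the main theorem of complex multiplication in idelic form to get both the algebraicity/abelianness of $h(\omega)$ and the description of $[s^{-1},K]$ as multiplication by $s$ on the torsion of $\mathbb{C}/[\omega,1]$ read in the basis $(\omega,1)$, and then match the determinant and $\mathrm{SL}_2$ parts of $q_\omega(s)$ with the cyclotomic and geometric halves of the $\mathrm{Gal}(\mathcal{F}_N/\mathcal{F}_1)$-action from Proposition \ref{Galoisdecomposition}, finally passing to the union over $N$. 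You also correctly flag the two places where such arguments usually go wrong (the $s^{-1}$ versus $s$ normalization, and the identification of $\det\circ q_\omega$ with the norm governing the action on $\mathbb{Q}(\zeta_N)$). As a standalone proof this is of course incomplete --- the main theorem of complex multiplication and the canonical-model construction, which you explicitly black-box, are where essentially all of the content lives, and the ``bookkeeping'' you defer is itself several pages in Shimura --- but that is the same division of labor the paper makes by citing rather than proving. One small inaccuracy: you say that for $\omega\neq\tau_K$ one ``first moves $\omega$ to $\tau_K$ by an element of $\mathrm{SL}_2(\mathbb{Z})$''; this is not always possible, since $[\omega,1]$ may lie in a nonprincipal ideal class or be a proper ideal of a nonmaximal order. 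It is also unnecessary: $q_\omega$ is defined directly by the relation $\nu\left[\begin{smallmatrix}\omega\\1\end{smallmatrix}\right]=q_\omega(\nu)\left[\begin{smallmatrix}\omega\\1\end{smallmatrix}\right]$ (the occurrence of $\tau_K$ in the paper's display (\ref{defq}) for general $\omega$ is evidently a typo), and the matrix computation analogous to (\ref{gamma_p}) can be carried out in the basis $(\omega,1)$ itself.
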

\begin{proof}
See \cite[Theorem 6.31 (i)]{Shimura}.
\end{proof}

\begin{proposition}\label{model}
Let $S$ be an open subgroup of $\mathrm{GL}_2
(\widehat{\mathbb{Q}})$ containing $\mathbb{Q}^*$ such that
$S/\mathbb{Q}^*$ is compact.
Let
\begin{eqnarray*}
\Gamma_S&=&S\cap\mathrm{GL}_2^+(\mathbb{Q}),\\
\mathcal{F}_S&=&\{h\in\mathcal{F}~|~h^\gamma=h~\textrm{for all}~\gamma\in S\},\\
k_S&=&\{\nu\in\mathbb{Q}^\mathrm{ab}~|~
\nu^{[s,\,\mathbb{Q}]}=\nu~\textrm{for all}~
s\in\mathbb{Q}^*\det(S)
\subseteq\widehat{\mathbb{Q}}^*\}
\end{eqnarray*}
where $[\,\cdot,\,\mathbb{Q}]$ is the Artin map for $\mathbb{Q}$.
Then,
\begin{enumerate}
\item[\textup{(i)}] $\Gamma_S/\mathbb{Q}^*$
is a Fuchsian group of the first kind commensurable with
$\mathrm{SL}_2(\mathbb{Z})/\{\pm I_2\}$.
\item[\textup{(ii)}] $\mathbb{C}\mathcal{F}_S$
is the field of meromorphic modular functions for
$\Gamma_S/\mathbb{Q}^*$.
\item[\textup{(iii)}]
$k_S$ is algebraically closed in $\mathcal{F}_S$.
\item[\textup{(iv)}] If $\omega\in K\cap\mathbb{H}$, then
the subgroup $K^*q_\omega^{-1}(S)$ of
$\widehat{K}^*$ corresponds to
the subfield
\begin{equation*}
 K\mathcal{F}_S(\omega)=K(h(\omega)~|~h\in\mathcal{F}_S~
\textrm{is finite at}~\omega)
\end{equation*}
of $K^\mathrm{ab}$ in view of \textup{Proposition \ref{1-1}}.
\end{enumerate}
\end{proposition}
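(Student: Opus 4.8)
The plan is to follow Shimura's analysis of canonical models, treating the four assertions in turn and reducing everything to the interplay between the idelic Galois action on $\mathcal{F}$, recorded in Proposition \ref{Galoisdecomposition}, and the reciprocity law of Proposition \ref{reciprocity}.

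For (i) I would exploit that $S$ is open in $\mathrm{GL}_2(\widehat{\mathbb{Q}})$ with $S/\mathbb{Q}^*$ compact. Openness forces $S$ to contain $\mathbb{Q}^*\prod_p(I_2+NM_2(\mathbb{Z}_p))$ for some $N$, while compactness of $S/\mathbb{Q}^*$ confines $S$ within a bounded multiple of $\mathbb{Q}^*\prod_p\mathrm{GL}_2(\mathbb{Z}_p)$; together these say $S$ is commensurable with $\mathbb{Q}^*\prod_p\mathrm{GL}_2(\mathbb{Z}_p)$. Intersecting with $\mathrm{GL}_2^+(\mathbb{Q})$ and invoking the elementary identity $\mathrm{GL}_2^+(\mathbb{Q})\cap\prod_p\mathrm{GL}_2(\mathbb{Z}_p)=\mathrm{SL}_2(\mathbb{Z})$ (a rational matrix that is $p$-integral with unit determinant at every $p$ has entries in $\bigcap_p\mathbb{Z}_p=\mathbb{Z}$ and determinant in $\{\pm1\}$, hence lies in $\mathrm{SL}_2(\mathbb{Z})$ once the determinant is positive), I would conclude that $\Gamma_S/\mathbb{Q}^*$ is commensurable with $\mathrm{SL}_2(\mathbb{Z})/\{\pm I_2\}$. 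Since commensurability preserves being Fuchsian of the first kind, (i) follows.

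For (ii) and (iii) I would separate the Galois action on $\mathcal{F}$ into the two pieces supplied by Proposition \ref{Galoisdecomposition}: the diagonal part $\begin{bmatrix}1&0\\0&d\end{bmatrix}$ acts on Fourier coefficients through $\sigma_d\in\mathrm{Gal}(\mathbb{Q}(\zeta_N)/\mathbb{Q})$, whereas the $\mathrm{SL}_2$-part acts geometrically by $\tau\mapsto\gamma(\tau)$. Extending scalars to $\mathbb{C}$ annihilates the coefficient action, so an element of $\mathbb{C}\mathcal{F}_S$ is exactly a meromorphic modular function, of the level $N$ determined by the openness of $S$, that is invariant under $\Gamma_S$; this gives (ii). For (iii), the elements of $\mathcal{F}$ algebraic over $\mathbb{Q}$ are precisely the constants $\mathbb{Q}^{\mathrm{ab}}=\bigcup_N\mathbb{Q}(\zeta_N)$, on which $S$ acts only through $\det(S)$ and the cyclotomic Artin map, so the constants fixed by $S$ form exactly $k_S$. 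Being the full field of constants of the function field $\mathcal{F}_S$, the field $k_S$ is therefore algebraically closed in $\mathcal{F}_S$.

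The arithmetic heart is (iv), which I would establish through Proposition \ref{reciprocity}. Writing $J=K^*q_\omega^{-1}(S)$ and letting $L$ be the field attached to $J$ by Proposition \ref{1-1}, the inclusion $K\mathcal{F}_S(\omega)\subseteq L$ is immediate: for $s\in q_\omega^{-1}(S)$ and $h\in\mathcal{F}_S$ finite at $\omega$, the relation $h(\omega)^{[s^{-1},\,K]}=h(\tau)^{q_\omega(s)}|_{\tau=\omega}=h(\omega)$ holds because $q_\omega(s)\in S$ fixes $h$, so every generator $h(\omega)$ of $K\mathcal{F}_S(\omega)$ is fixed by the Artin image of $J$ (recalling $K^*$ lies in the kernel of the reciprocity map). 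The reverse inclusion is the \emph{genuine obstacle}: given $s$ whose Artin image fixes $K\mathcal{F}_S(\omega)$, reciprocity yields $h(\tau)^{q_\omega(s)}|_{\tau=\omega}=h(\omega)$ for all $h\in\mathcal{F}_S$, and I must deduce that $q_\omega(s)$ lies in $S$ modulo the stabilizer of $\omega$. This forces me to prove that the values $\{h(\omega)\mid h\in\mathcal{F}_S\}$ separate the cosets of $S$ finely enough, that is, that $\omega$ is a point of the canonical model whose special values generate precisely the expected class field; the real work is this generation/injectivity statement, resting on parts (ii)--(iii) together with the faithfulness of the idelic action. Matching the two inclusions, equivalently comparing the indices $[\,\widehat{K}^*:J\,]$ and $[K\mathcal{F}_S(\omega):K]$, identifies $L$ with $K\mathcal{F}_S(\omega)$ and completes (iv).
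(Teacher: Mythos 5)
The paper does not actually prove this proposition: it is quoted from Shimura and the ``proof'' is the citation \cite[Propositions 6.27 and 6.33]{Shimura}. Measured against Shimura's argument, your sketch of (i)--(iii) is essentially the right route: commensurability of $S$ with $\mathbb{Q}^*\prod_p\mathrm{GL}_2(\mathbb{Z}_p)$ from openness plus compactness modulo the center, the identity $\mathrm{GL}_2^+(\mathbb{Q})\cap\prod_p\mathrm{GL}_2(\mathbb{Z}_p)=\mathrm{SL}_2(\mathbb{Z})$ up to sign of the determinant, and the splitting of the action on $\mathcal{F}_N$ into the cyclotomic and geometric parts. (In (ii) you still owe the surjectivity direction, that every $\Gamma_S$-invariant function in $\mathbb{C}\mathcal{F}_N$ already lies in $\mathbb{C}\mathcal{F}_S$; this is a Galois-descent step, routine but not automatic from ``extending scalars annihilates the coefficient action.'')

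The genuine gap is in (iv), exactly where you flag it. You establish $K\mathcal{F}_S(\omega)\subseteq L$ cleanly from Proposition \ref{reciprocity}, but for the reverse inclusion you write that ``the real work is this generation/injectivity statement'' and then assert it rests on (ii)--(iii) together with ``the faithfulness of the idelic action.'' It does not: faithfulness of the action of $\mathrm{GL}_2(\widehat{\mathbb{Q}})$ on the function field $\mathcal{F}$ says nothing about whether the \emph{values} $h(\omega)$ separate the cosets of $K^*q_\omega^{-1}(S)$ in $\widehat{K}^*$. What is needed is: if $\sigma\in\mathrm{Gal}(K^{\mathrm{ab}}/K)$ fixes $h(\omega)$ for every $h\in\mathcal{F}_S$ finite at $\omega$, then $\sigma=[s^{-1},K]$ for some $s$ with $q_\omega(s)\in S$. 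In Shimura this is extracted from the existence of the canonical model of $\Gamma_S\backslash\mathbb{H}^*$ over $k_S$ and the behaviour of the CM point on it, i.e.\ from the full strength of the main theorem of complex multiplication, not from (ii)--(iii). A workable way to close the gap without rebuilding canonical models is to first treat $S_N=\mathbb{Q}^*\prod_p\{\gamma\in\mathrm{GL}_2(\mathbb{Z}_p)\mid\gamma\equiv I_2\ (\mathrm{mod}\ NM_2(\mathbb{Z}_p))\}$, where $K\mathcal{F}_N(\omega)$ is known to correspond to $K^*q_\omega^{-1}(S_N)$, and then descend to general $S\supseteq S_N$ by matching fixed fields under the finite group $q_\omega^{-1}(S)/q_\omega^{-1}(S_N)$ via Proposition \ref{reciprocity}, comparing indices on both sides. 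As written, your proposal names the obstacle but does not surmount it, so (iv) is not proved.
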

\begin{proof}
See \cite[Propositions 6.27 and 6.33]{Shimura}.
\end{proof}

\begin{remark}\label{overQ}
In particular, if $k_S=\mathbb{Q}$, then
$\mathcal{F}_S=\mathcal{F}_{\Gamma_S,\,\mathbb{Q}}$
(\cite[Exercise 6.26]{Shimura}).
\end{remark}

\section {Construction of class invariants}\label{classinvariants}

Let $K$ be an imaginary quadratic field, $N$ be a positive integer
and $\mathfrak{n}=N\mathcal{O}_K$.
From now on, let $T$ be a subgroup of $(\mathbb{Z}/N\mathbb{Z})^*$ and $P$ be
a subgroup of $P_K(\mathfrak{n})$ containing
$P_{K,\,1}(\mathfrak{n})$ given by
\begin{eqnarray*}
P&=&\langle\nu\mathcal{O}_K~|~
\nu\in\mathcal{O}_K-\{0\}~\textrm{such that}~\nu\equiv t\Mod{\mathfrak{n}}~
\textrm{for some}~t\in T\rangle\\
&=&\{\lambda\mathcal{O}_K~|~\lambda\in K^*~
\textrm{such that}~\lambda\equiv^*t\Mod{\mathfrak{n}}~
\textrm{for some}~t\in T\}.\nonumber
\end{eqnarray*}
Let $\mathrm{Cl}(P)$ denote the ideal class group
\begin{equation*}
\mathrm{Cl}(P)=I_K(\mathfrak{n})/P
\end{equation*}
and $K_P$ be its corresponding class field of $K$ with
$\mathrm{Cl}(P)\simeq\mathrm{Gal}(K_P/K)$.
Furthermore, let
\begin{equation*}
\Gamma=\left\{\gamma\in\mathrm{SL}_2(\mathbb{Z})~|~
\gamma\equiv\begin{bmatrix}t^{-1}&\mathrm{*}\\
0&t\end{bmatrix}\Mod{N M_2(\mathbb{Z})}~
\textrm{for some}~t\in T\right\}
\end{equation*}
where $t^{-1}$ stands for an integer such that $tt^{-1}\equiv1\Mod{N}$.
In this section, for a given $h\in\mathcal{F}_{\Gamma,\,\mathbb{Q}}$ we shall define a class
invariant $h(C)$ for each class $C\in I_K(\mathfrak{n})/P$.

\begin{lemma}\label{subgroup}
The field $K_P$ corresponds to the subgroup
\begin{equation*}
\bigcup_{t\in T}K^*\left(\prod_{p\,|\,N}(t+N\mathcal{O}_{K,\,p})
\times\prod_{p\,\nmid\,N}\mathcal{O}_{K,\,p}^*\right)
\end{equation*}
of $\widehat{K}^*$
in view of \textup{Proposition \ref{1-1}}.
\end{lemma}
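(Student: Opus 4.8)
The plan is to realize the asserted subgroup as the kernel of the idelic reciprocity map attached to $\mathrm{Cl}(P)$, and then to identify this kernel explicitly by a local computation at the primes dividing $N$. Throughout write
\[
W_t=\prod_{p\,|\,N}(t+N\mathcal{O}_{K,p})\times\prod_{p\,\nmid\,N}\mathcal{O}_{K,p}^*\qquad(t\in T),
\]
so that the subgroup in the statement is $J=\bigcup_{t\in T}K^*W_t$; observe that $W_1$ is exactly the group appearing in $J_\mathfrak{n}=K^*W_1$ of \textup{Proposition \ref{rayidele}}.

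First I would set up the relevant homomorphism. Since $P_{K,\,1}(\mathfrak{n})\subseteq P$, there is a natural surjection $\pi:I_K(\mathfrak{n})/P_{K,\,1}(\mathfrak{n})\rightarrow I_K(\mathfrak{n})/P=\mathrm{Cl}(P)$, and I would consider the composite $\psi=\pi\circ\phi_\mathfrak{n}:\widehat{K}^*\rightarrow\mathrm{Cl}(P)$, a continuous surjective homomorphism onto a finite group. Because $\phi_\mathfrak{n}$ is the reciprocity map cutting out $K_\mathfrak{n}$ and $\pi$ is the restriction $\mathrm{Gal}(K_\mathfrak{n}/K)\rightarrow\mathrm{Gal}(K_P/K)$ under the identification $\mathrm{Cl}(P)\simeq\mathrm{Gal}(K_P/K)$, \textup{Proposition \ref{1-1}} shows that $\ker\psi$ is precisely the closed finite-index subgroup of $\widehat{K}^*$ corresponding to $K_P$. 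It therefore remains to prove $\ker\psi=J$; by definition $\ker\psi=\{s\in\widehat{K}^*~|~\nu_ss\mathcal{O}_K\in P\}$.

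For the inclusion $J\subseteq\ker\psi$, take $s=\mu w$ with $\mu\in K^*$ and $w\in W_t$; since $K^*\subseteq\ker\phi_\mathfrak{n}$ we have $\psi(s)=\psi(w)$. Every component $w_p$ is a unit (for $p\,|\,N$ because $t$ is prime to $N$), so $w\mathcal{O}_K=\mathcal{O}_K$, and choosing the integer $t^{-1}$ as the element $\nu_w$ of \textup{(\ref{k})} (valid since $t^{-1}w_p\equiv t^{-1}t\equiv1\Mod{N\mathcal{O}_{K,p}}$ for $p\,|\,N$) gives $\phi_\mathfrak{n}(w)=[t^{-1}\mathcal{O}_K]$; as $t^{-1}\in T$ this ideal lies in $P$, so $\psi(w)$ is trivial. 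Conversely, if $\nu_ss\mathcal{O}_K=\lambda\mathcal{O}_K$ with $\lambda\equiv^* t\Mod{\mathfrak{n}}$ for some $t\in T$, set $u=\nu_s\lambda^{-1}s$; then $u$ generates the unit ideal, hence $u\in\prod_p\mathcal{O}_{K,p}^*$ and $s=(\nu_s^{-1}\lambda)u\in K^*u$. At $p\,|\,N$ the congruence gives $\lambda^{-1}\equiv t^{-1}\Mod{N\mathcal{O}_{K,p}}$, while $\nu_ss_p\in1+N\mathcal{O}_{K,p}$ by the choice of $\nu_s$, so $u_p=\lambda^{-1}(\nu_ss_p)\equiv t^{-1}\Mod{N\mathcal{O}_{K,p}}$; thus $u\in W_{t^{-1}}$ and $s\in K^*W_{t^{-1}}\subseteq J$. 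Together these inclusions yield $\ker\psi=J$, and the lemma follows.

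I anticipate the main obstacle to be the faithful translation of the multiplicative congruence $\equiv^*$ modulo $\mathfrak{n}$ into the local conditions $\lambda\in\mathcal{O}_{K,p}^*$ and $\lambda\equiv t\Mod{N\mathcal{O}_{K,p}}$ for $p\,|\,N$, together with the bookkeeping of the inverse $t^{-1}$ that appears when passing through $\phi_\mathfrak{n}$. Care is also needed to justify that $\ker\psi$ corresponds to $K_P$ itself, rather than merely to some abelian extension realizing the same Galois group, which is why I would invoke the Artin-map compatibility of $\phi_\mathfrak{n}$ (and the fact $J\supseteq J_\mathfrak{n}$, so $K_P\subseteq K_\mathfrak{n}$) rather than only the abstract isomorphism of class groups.
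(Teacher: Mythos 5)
Your proposal is correct and follows essentially the same route as the paper: both reduce the lemma to identifying $\phi_\mathfrak{n}^{-1}(P/P_{K,\,1}(\mathfrak{n}))$ via Proposition \ref{rayidele} (your $\ker\psi$ is exactly this preimage, since $\ker\pi=P/P_{K,\,1}(\mathfrak{n})$), and then compute it locally at the primes dividing $N$. The only cosmetic difference is that the paper writes the preimage as the union of cosets $s(t)J_\mathfrak{n}$ of the known kernel, whereas you verify the two inclusions by hand; in both cases the harmless swap of $t^{-1}$ for $t$ at the end is absorbed by $T$ being a subgroup of $(\mathbb{Z}/N\mathbb{Z})^*$.
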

\begin{proof}
We adopt the notations in Proposition \ref{rayidele}.
Given $t\in T$, let $t^{-1}$ be an integer such that
$tt^{-1}\equiv1\Mod{N}$.
Let $s=s(t)=(s_p)\in
\widehat{K}^*$ be given by
\begin{equation*}
s_p=\left\{\begin{array}{ll}
t^{-1} & \textrm{if}~p\,|\,N,\\
1 & \textrm{if}~p\nmid N.
\end{array}\right.
\end{equation*}
Then one can take $\nu_s=t$ so as to have (\ref{k}), and hence
\begin{equation}\label{st}
\phi_\mathfrak{n}(s)=[ts\mathcal{O}_K]=[t\mathcal{O}_K].
\end{equation}
Since $P$ contains $P_{K,\,1}(\mathfrak{n})$,
we obtain
$K_P\subseteq K_\mathfrak{n}$ and $\mathrm{Gal}(K_\mathfrak{n}/K_P)\simeq P/P_{K,\,1}(\mathfrak{n})$.
Thus we achieve by Proposition \ref{rayidele} that the field $K_P$ corresponds to
\begin{eqnarray*}
\phi_\mathfrak{n}^{-1}(P/P_{K,\,1}(\mathfrak{n}))&=&
\phi_\mathfrak{n}^{-1}\left(\bigcup_{t\in T}[t\mathcal{O}_K]\right)\quad\textrm{by
the definitions of $P_{K,\,1}(\mathfrak{n})$ and $P$}\\
&=&\bigcup_{t\in T}s(t)J_\mathfrak{n}
\quad\textrm{by (\ref{st}) and the fact $J_\mathfrak{n}=\mathrm{Ker}(\phi_\mathfrak{n})$}\\
&=&\bigcup_{t\in T}K^*\left(\prod_{p\,|\,N}(t^{-1}+N\mathcal{O}_{K,\,p})
\times\prod_{p\,\nmid\,N}\mathcal{O}_{K,\,p}^*\right)\\
&=&\bigcup_{t\in T}K^*\left(\prod_{p\,|\,N}(t+N\mathcal{O}_{K,\,p})
\times\prod_{p\,\nmid\,N}\mathcal{O}_{K,\,p}^*\right).
\end{eqnarray*}
\end{proof}

\begin{proposition}\label{KPKF}
We have $K_P=K\mathcal{F}_{\Gamma,\,\mathbb{Q}}(\tau_K)$.
\end{proposition}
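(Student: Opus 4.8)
The plan is to attach both sides of the desired identity to one and the same open subgroup $S$ of $\mathrm{GL}_2(\widehat{\mathbb{Q}})$ and then quote the bijection of Proposition \ref{1-1}. Writing $N=\prod_{p\mid N}p^{e_p}$, I would set
\[
W_N=\left\{\begin{bmatrix}a&b\\0&e\end{bmatrix}\in\mathrm{GL}_2(\mathbb{Z}/N\mathbb{Z})~\middle|~a\in T\right\},
\]
let $U$ be its full preimage under the reduction $\prod_p\mathrm{GL}_2(\mathbb{Z}_p)\to\mathrm{GL}_2(\mathbb{Z}/N\mathbb{Z})$, and put $S=\mathbb{Q}^*U$. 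Then $S$ is open, contains $\mathbb{Q}^*$, and $S/\mathbb{Q}^*$ is compact, so Proposition \ref{model} applies. The two things to prove are: (A) $\mathcal{F}_S=\mathcal{F}_{\Gamma,\,\mathbb{Q}}$, and (B) $K^*q_{\tau_K}^{-1}(S)$ equals the subgroup $J_P$ of $\widehat{K}^*$ attached to $K_P$ in Lemma \ref{subgroup}. Granting both, Proposition \ref{model}(iv) shows that $K\mathcal{F}_{\Gamma,\,\mathbb{Q}}(\tau_K)=K\mathcal{F}_S(\tau_K)$ corresponds to $K^*q_{\tau_K}^{-1}(S)=J_P$, which by Lemma \ref{subgroup} is the subgroup corresponding to $K_P$; hence $K_P=K\mathcal{F}_{\Gamma,\,\mathbb{Q}}(\tau_K)$ by the one-to-one correspondence of Proposition \ref{1-1}.

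For (A) I would first check $k_S=\mathbb{Q}$. Since $\det(W_N)=(\mathbb{Z}/N\mathbb{Z})^*$ (take $a=1\in T$ and let $e$ vary), one has $\det(U)=\widehat{\mathbb{Z}}^*$, so $\mathbb{Q}^*\det(S)=\mathbb{Q}^*\widehat{\mathbb{Z}}^*=\widehat{\mathbb{Q}}^*$, whose fixed field under the Artin map of $\mathbb{Q}$ is $\mathbb{Q}$; thus Remark \ref{overQ} gives $\mathcal{F}_S=\mathcal{F}_{\Gamma_S,\,\mathbb{Q}}$ and reduces the matter to identifying fixed fields at level $N$. Because $U$ contains the level-$N$ principal congruence subgroup of $\prod_p\mathrm{GL}_2(\mathbb{Z}_p)$, the field $\mathcal{F}_S$ lies in $\mathcal{F}_N$ and is the subfield of $\mathcal{F}_N$ fixed by the image $W$ of $S$ in $\mathrm{Gal}(\mathcal{F}_N/\mathcal{F}_1)=\mathrm{GL}_2(\mathbb{Z}/N\mathbb{Z})/\{\pm I_2\}$, namely $W=W_N/\{\pm I_2\}$. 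On the other hand, by Proposition \ref{Galoisdecomposition} a function in $\mathcal{F}_N$ is $\Gamma$-invariant exactly when it is fixed by the image $\overline{\Gamma}$ of $\Gamma$, and has rational Fourier coefficients exactly when it is fixed by the diagonal subgroup $\bigl\{\begin{bmatrix}1&0\\0&d\end{bmatrix}\bigr\}$; a short matrix computation shows that $\overline{\Gamma}$ together with this diagonal subgroup generates precisely $W$. Hence $\mathcal{F}_{\Gamma,\,\mathbb{Q}}=(\mathcal{F}_N)^W=\mathcal{F}_S$.

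For (B), which I expect to be the crux, I would use that $q_{\tau_K}$ is a homomorphism carrying $\mathbb{Q}^*$ to scalar matrices, so $q_{\tau_K}^{-1}(S)=\mathbb{Q}^*q_{\tau_K}^{-1}(U)$ and the claim reduces to $K^*q_{\tau_K}^{-1}(U)=J_P$ (using $\mathbb{Q}^*\subseteq K^*$). Then I would analyze $q_{\tau_K}^{-1}(U)$ prime by prime via formula (\ref{gamma_p}): for $s=(s_p)$ with $s_p=u_p\tau_K+v_p$, the matrix $\gamma_p=q_{\tau_K,\,p}(s_p)$ lies in $\mathrm{GL}_2(\mathbb{Z}_p)$ iff multiplication by $s_p$ preserves $\mathcal{O}_{K,\,p}=\mathbb{Z}_p\tau_K+\mathbb{Z}_p$, i.e. iff $s_p\in\mathcal{O}_{K,\,p}^*$; and for $p\mid N$ the reduction of $\gamma_p$ modulo $p^{e_p}$ is upper triangular precisely when $u_p\equiv0$, in which case both diagonal entries reduce to $v_p$. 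Reassembling these local conditions through the Chinese remainder theorem, membership of $s$ in $q_{\tau_K}^{-1}(U)$ becomes: $s_p\in\mathcal{O}_{K,\,p}^*$ for all $p$, and there exists $t\in T$ with $s_p\equiv t\Mod{N\mathcal{O}_{K,\,p}}$ for all $p\mid N$. This is exactly $\bigcup_{t\in T}\bigl(\prod_{p\mid N}(t+N\mathcal{O}_{K,\,p})\times\prod_{p\nmid N}\mathcal{O}_{K,\,p}^*\bigr)$, so multiplying by $K^*$ yields the subgroup displayed in Lemma \ref{subgroup}, that is $J_P$. The main difficulty here is the bookkeeping: the condition $a\in T$ is a joint congruence modulo $N$ rather than a product of local conditions, so one must keep the diagonal entries of the $\gamma_p$ synchronized across $p\mid N$ through the common residue $t$, and one must dispose of the scalar factor $\mathbb{Q}^*$ cleanly before comparing with the idelic description of $K_P$.
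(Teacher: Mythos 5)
Your proposal is correct and follows essentially the same route as the paper: both sides are attached to an open subgroup $S=\mathbb{Q}^*W$ of $\mathrm{GL}_2(\widehat{\mathbb{Q}})$, the field $\mathcal{F}_S$ is identified with $\mathcal{F}_{\Gamma,\,\mathbb{Q}}$ using $\Gamma_S$, $\det(W)=\widehat{\mathbb{Z}}^*$ and Remark \ref{overQ}, the group $K^*q_{\tau_K}^{-1}(S)$ is computed prime by prime through (\ref{gamma_p}), and the conclusion follows from Lemma \ref{subgroup} together with Propositions \ref{1-1} and \ref{model} (iv). The only deviation is that your $W_N$ places the condition ``$\in T$'' on the $(1,1)$-entry whereas the paper's $W$ places it on the $(2,2)$-entry; this is immaterial here, since $T$ is closed under inversion (so both choices yield $\Gamma_S=\mathbb{Q}^*\Gamma$) and since a matrix of the shape (\ref{gamma_p}) that is upper triangular modulo $N$ has both diagonal entries congruent to $v_p$ (so both choices yield the same $K^*q_{\tau_K}^{-1}(S)$).
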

\begin{proof}
Let $S=\mathbb{Q}^*W$ ($\subseteq\mathrm{GL}_2(\widehat{\mathbb{Q}})$) with
\begin{equation*}
W=\bigcup_{t\in T}\left\{
\gamma=(\gamma_p)\in\prod_p\mathrm{GL}_2(\mathbb{Z}_p)~|~
\gamma_p\equiv\begin{bmatrix}\mathrm{*}&\mathrm{*}\\
0&t\end{bmatrix}\Mod{N M_2(\mathbb{Z}_p)}~\textrm{for all $p$}\right\}.
\end{equation*}
Following the notations in Proposition \ref{model}
one can readily show that
\begin{equation*}
\Gamma_S=
\mathbb{Q}^*\left\{
\gamma\in\mathrm{SL}_2(\mathbb{Z})~|~\gamma
\equiv\begin{bmatrix}\mathrm{*}&\mathrm{*}\\0&t\end{bmatrix}
\Mod{N M_2(\mathbb{Z})}~\textrm{for some}~t\in T\right\}\quad
\textrm{and}\quad
\det(W)=\widehat{\mathbb{Z}}^*.
\end{equation*}
It then follows that
$\Gamma_S/\mathbb{Q}^*\simeq\Gamma/\{\pm I_2\}$
and $k_S=\mathbb{Q}$,
and hence
\begin{equation}\label{F_S}
\mathcal{F}_S=\mathcal{F}_{\Gamma,\,\mathbb{Q}}
\end{equation}
by Proposition \ref{model} (ii) and Remark \ref{overQ}.
Furthermore, we deduce that
\begin{eqnarray*}
K^*q_{\tau_K}^{-1}(S)&=&K^*\{s=(s_p)\in\widehat{K}^*~|~q_{\tau_K}(s)\in \mathbb{Q}^*W\}\\
&=&K^*\{s=(s_p)\in\widehat{K}^*~|~q_{\tau_K}(s)\in W\}
\quad\textrm{since}~q_{\tau_K}(r)=rI_2~\textrm{for every}~r\in\mathbb{Q}^*~\textrm{by (\ref{defq})}\\
&=&K^*\{s=(s_p)\in\widehat{K}^*~|~
s_p=u_p\tau_K+v_p~\textrm{with}~u_p,\,v_p\in\mathbb{Q}_p
~\textrm{such that}\\
&&\hspace{3.4cm}\gamma_p=\left[\begin{smallmatrix}v_p-b_Ku_p & -c_Ku_p\\u_p&v_p\end{smallmatrix}\right]
\in W~\textrm{for all $p$}\}\quad\textrm{by (\ref{gamma_p})}\\
&=&\bigcup_{t\in T}K^*\{s=(s_p)\in\widehat{K}^*~|~
s_p=u_p\tau_K+v_p~\textrm{with}~u_p,\,v_p\in\mathbb{Z}_p
~\textrm{such that}\\
&&\hspace{4cm}\gamma_p\in\mathrm{GL}_2(\mathbb{Z}_p)
~\textrm{and}~\gamma_p\equiv
\left[\begin{smallmatrix}\mathrm{*}&\mathrm{*}\\0&t\end{smallmatrix}\right]
\Mod{N M_2(\mathbb{Z}_p)}~\textrm{for all $p$}\}\\
&=&\bigcup_{t\in T}K^*\{s=(s_p)\in\widehat{K}^*~|~
s_p=u_p\tau_K+v_p~\textrm{with}~u_p,\,v_p\in\mathbb{Z}_p
~\textrm{such that}\\
&&\hspace{4cm}\det(\gamma_p)=(u_p\tau_K+v_p)(u_p\overline{\tau}_K+v_p)\in\mathbb{Z}_p^*,\\
&&\hspace{4cm}u_p\equiv0\Mod{N\mathbb{Z}_p}~
\textrm{and}~v_p\equiv t\Mod{N\mathbb{Z}_p}~\textrm{for all $p$}\}\\
&=&\bigcup_{t\in T}K^*\left(
\prod_{p\,|\,N}(t+N\mathcal{O}_{K,\,p})\times
\prod_{p\,\nmid\,N}\mathcal{O}_{K,\,p}^*\right).
\end{eqnarray*}
Therefore we conclude by Proposition \ref{model} (iv), (\ref{F_S}) and Lemma \ref{subgroup} that
\begin{equation*}
K_P=K\mathcal{F}_{\Gamma,\,\mathbb{Q}}(\tau_K).
\end{equation*}
\end{proof}

Let $C\in\mathrm{Cl}(P)$.
Take an integral ideal $\mathfrak{a}$ in
the class $C$, and let
$\xi_1$ and $\xi_2$ be elements of $K^*$ so that
\begin{equation*}
\mathfrak{a}^{-1}=[\xi_1,\,\xi_2]
\quad
\textrm{and}\quad
\xi=\frac{\xi_1}{\xi_2}\in\mathbb{H}.
\end{equation*}
Since $\mathcal{O}_K=[\tau_K,\,1]\subseteq\mathfrak{a}^{-1}$ and
$\xi\in\mathbb{H}$, one can express
\begin{equation}\label{A}
\begin{bmatrix}\tau_K\\1\end{bmatrix}=A\begin{bmatrix}
\xi_1\\\xi_2\end{bmatrix}\quad\textrm{for some}~
A\in M_2^+(\mathbb{Z}).
\end{equation}
We then attain by taking determinant and squaring
\begin{equation*}
\begin{bmatrix}
\tau_K&\overline{\tau}_K\\1&1\end{bmatrix}
=A\begin{bmatrix}\xi_1&\overline{\xi}_1\\
\xi_2&\overline{\xi}_2\end{bmatrix}
\end{equation*}
that
\begin{equation*}
d_K=\det(A)^2\mathrm{N}_{K/\mathbb{Q}}(\mathfrak{a})^{-2}d_K
\end{equation*}
(\cite[Chapter III]{Lang94}).
Hence, $\det(A)=\mathrm{N}_{K/\mathbb{Q}}(\mathfrak{a})$
which is relatively prime to $N$.
For $\alpha\in M_2(\mathbb{Z})$ with $\gcd(N,\,\det(\alpha))=1$, we shall denote by
$\widetilde{\alpha}$ its reduction
onto $\mathrm{GL}_2(\mathbb{Z}/N\mathbb{Z})/\{\pm I_2\}$
($\simeq\mathrm{Gal}(\mathcal{F}_N/\mathcal{F}_1)$).

\begin{definition}\label{invariant}
Let $h\in\mathcal{F}_{\Gamma,\,\mathbb{Q}}$ \textup{($\subseteq
\mathcal{F}_N$)}.
With the notations as above, we define
\begin{equation*}
h(C)=h(\tau)^{\widetilde{A}}|_{\tau=\xi}
\end{equation*}
if it is finite.
\end{definition}

\begin{proposition}
If $h(C)$ is finite, then it depends only on the class $C$
regardless of the
choice of $\mathfrak{a}$, $\xi_1$ and $\xi_2$.
\end{proposition}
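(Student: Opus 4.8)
The plan is to separate the two sources of ambiguity: the choice of a $\mathbb{Z}$-basis $\xi_1,\xi_2$ of the fixed ideal $\mathfrak{a}^{-1}$, and the choice of the integral ideal $\mathfrak{a}$ within the class $C$. I would settle the basis question first, so that in the second step I am free to pick any convenient basis. For the basis independence, if $[\xi_1,\xi_2]=[\xi_1',\xi_2']$ with $\xi=\xi_1/\xi_2$ and $\xi'=\xi_1'/\xi_2'$ both in $\mathbb{H}$, then $\left[\begin{smallmatrix}\xi_1'\\\xi_2'\end{smallmatrix}\right]=\gamma\left[\begin{smallmatrix}\xi_1\\\xi_2\end{smallmatrix}\right]$ for a unique $\gamma\in\mathrm{GL}_2(\mathbb{Z})$, and $\mathrm{Im}(\xi')=\mathrm{Im}(\gamma(\xi))>0$ forces $\det\gamma=1$, so $\gamma\in\mathrm{SL}_2(\mathbb{Z})$. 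Conjugating (\ref{A}) and using that $\left[\begin{smallmatrix}\xi_1&\overline{\xi}_1\\\xi_2&\overline{\xi}_2\end{smallmatrix}\right]$ is invertible, I get $A'=A\gamma^{-1}$, hence $\widetilde{A'}=\widetilde{A}\,\widetilde{\gamma^{-1}}$ with $\gamma^{-1}\in\mathrm{SL}_2(\mathbb{Z})$. Proposition \ref{Galoisdecomposition}(ii) then gives $h(\tau)^{\widetilde{A'}}|_{\tau=\xi'}=(h^{\widetilde{A}})(\gamma^{-1}(\xi'))=(h^{\widetilde{A}})(\xi)=h(\tau)^{\widetilde{A}}|_{\tau=\xi}$, since $\gamma^{-1}(\xi')=\xi$; in particular finiteness is preserved.

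For the ideal independence, let $\mathfrak{b}$ be any other integral ideal in $C$, so that $\mathfrak{a}=\lambda\mathfrak{b}$ with $\lambda\mathcal{O}_K\in P$; by the definition of $P$ I may assume $\lambda\equiv^* t\Mod{\mathfrak{n}}$ for some $t\in T$. Fixing a basis $\mathfrak{b}^{-1}=[\eta_1,\eta_2]$ with $\eta=\eta_1/\eta_2\in\mathbb{H}$ and its matrix $B$ from (\ref{A}), I would choose the basis $\xi_i=\lambda^{-1}\eta_i$ of $\mathfrak{a}^{-1}=\lambda^{-1}\mathfrak{b}^{-1}$, which gives $\xi=\eta$ and, after clearing $\lambda^{-1}$, the relation $\lambda\left[\begin{smallmatrix}\tau_K\\1\end{smallmatrix}\right]=A\left[\begin{smallmatrix}\eta_1\\\eta_2\end{smallmatrix}\right]$. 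Comparing this with $\lambda\left[\begin{smallmatrix}\tau_K\\1\end{smallmatrix}\right]=q_{\tau_K}(\lambda)\left[\begin{smallmatrix}\tau_K\\1\end{smallmatrix}\right]=q_{\tau_K}(\lambda)B\left[\begin{smallmatrix}\eta_1\\\eta_2\end{smallmatrix}\right]$, which comes from (\ref{defq}) and the defining relation of $B$, and invoking invertibility as before, I obtain $A=q_{\tau_K}(\lambda)B$.

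The crux is then to reduce $q_{\tau_K}(\lambda)$ modulo $N$. Since $\lambda\mathcal{O}_K$ is prime to $\mathfrak{n}$, the congruence $\lambda\equiv^* t\Mod{\mathfrak{n}}$ amounts locally at each $p\mid N$ to $\lambda\in\mathcal{O}_{K,p}^*$ and $\lambda\equiv t\Mod{N\mathcal{O}_{K,p}}$; writing $\lambda=u_p\tau_K+v_p$ with $u_p,v_p\in\mathbb{Z}_p$ gives $u_p\equiv0$ and $v_p\equiv t\Mod{N\mathbb{Z}_p}$, so by (\ref{gamma_p}) the local matrix is $\equiv tI_2\Mod{N M_2(\mathbb{Z}_p)}$. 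Hence $\widetilde{q_{\tau_K}(\lambda)}=\widetilde{tI_2}$ and $\widetilde{A}=\widetilde{tI_2}\,\widetilde{B}$. Finally I would check that $\widetilde{tI_2}$ fixes $h$: in the decomposition of Proposition \ref{Galoisdecomposition}, $tI_2=\left[\begin{smallmatrix}1&0\\0&t^2\end{smallmatrix}\right]\left[\begin{smallmatrix}t&0\\0&t^{-1}\end{smallmatrix}\right]$, where the first factor acts by $\sigma_{t^2}$ on Fourier coefficients and hence trivially because $h\in\mathcal{F}_{\Gamma,\,\mathbb{Q}}$ has rational coefficients, while the second factor lies in $\mathrm{SL}_2$ and is the reduction of an element of $\Gamma$ (namely the one with parameter $t^{-1}\in T$ and zero upper-right entry), so by Proposition \ref{Galoisdecomposition}(ii) and the $\Gamma$-invariance of $h$ it too acts trivially. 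Therefore $h^{\widetilde{A}}=h^{\widetilde{B}}$ in $\mathcal{F}_N$, and evaluating at $\xi=\eta$ gives equality of the two invariants; together with the basis independence this proves that $h(C)$ is well defined.

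The main obstacle is this last, ideal-independence step, and within it the identification of $\widetilde{q_{\tau_K}(\lambda)}$ with the scalar $\widetilde{tI_2}$ together with the verification that this scalar fixes $h$. This is exactly where the compatibility between the subgroup $P$ (cut out by $T$) and the congruence group $\Gamma$ (also cut out by $T$) enters: the diagonal part of the scalar is absorbed by rationality of the Fourier coefficients, and the $\mathrm{SL}_2$ part by $\Gamma$-invariance, which is the very same matching underlying Proposition \ref{KPKF}. The basis-independence step, by contrast, is a routine application of Proposition \ref{Galoisdecomposition}(ii).
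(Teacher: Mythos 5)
Your proof is correct, and its endgame coincides with the paper's: both reduce the comparison of the two transition matrices to multiplication by $\widetilde{tI_2}$ modulo $N$, and then absorb $\widetilde{tI_2}=\widetilde{\left[\begin{smallmatrix}1&0\\0&t^2\end{smallmatrix}\right]}\,\widetilde{\left[\begin{smallmatrix}t&0\\0&t^{-1}\end{smallmatrix}\right]}$ by using rationality of the Fourier coefficients for the first factor and $\Gamma$-modularity for the second. Where you genuinely diverge is in how the key congruence is obtained (your $\widetilde{A}=\widetilde{tI_2}\,\widetilde{B}$, the paper's $A'\equiv tAB^{-1}\Mod{NM_2(\mathbb{Z})}$, where the paper's $B$ is the $\mathrm{SL}_2(\mathbb{Z})$ change of basis, so it handles the basis change and the ideal change in one stroke rather than in your two separate steps). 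The paper stays global and elementary: from $\lambda\equiv^*t\Mod{\mathfrak{n}}$ it deduces $(\lambda-t)\mathcal{O}_K\subseteq N\mathfrak{a}^{-1}$, writes $\left[\begin{smallmatrix}(\lambda-t)\tau_K\\\lambda-t\end{smallmatrix}\right]=A''\left[\begin{smallmatrix}N\xi_1\\N\xi_2\end{smallmatrix}\right]$ with $A''\in M_2^+(\mathbb{Z})$, and reads off the congruence directly. You instead identify the quotient of transition matrices with $q_{\tau_K}(\lambda)$ and compute its reduction locally at each $p\mid N$ via (\ref{gamma_p}); this is valid --- $q_{\tau_K}(\lambda)$ need not be integral, but $A$ and $tB$ are, so the local congruences at all $p\mid N$ do yield $\widetilde{A}=\widetilde{tI_2}\,\widetilde{B}$ --- and it has the merit of reusing the idelic bookkeeping already set up for Proposition \ref{KPKF}, making transparent that the same matching among $T$, $P$ and $\Gamma$ drives both results. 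What the paper's version buys is that this particular step needs no adelic formalism at all.
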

\begin{proof}
Let $\mathfrak{a}'$ be also an integral ideal in $C$.
Take any $\xi_1',\,\xi_2'\in K^*$ so that
\begin{equation}\label{a'}
\mathfrak{a}'^{-1}=[\xi_1',\,\xi_2']\quad
\textrm{and}\quad \xi'=\frac{\xi_1'}{\xi_2'}\in\mathbb{H}.
\end{equation}
Since $\mathcal{O}_K\subseteq\mathfrak{a}'^{-1}$ and
$\xi'\in\mathbb{H}$, we may write
\begin{equation}\label{A'}
\begin{bmatrix}\tau_K\\1\end{bmatrix}=A'\begin{bmatrix}
\xi_1'\\\xi_2'\end{bmatrix}
\quad\textrm{for some}~
A'\in M_2^+(\mathbb{Z}).
\end{equation}
Now that $[\mathfrak{a}]=[\mathfrak{a}']=C$,
we have
\begin{equation*}
\mathfrak{a}'=\lambda\mathfrak{a}\quad
\textrm{with}~\lambda\in K^*~\textrm{such that}~\lambda\equiv^*t\Mod{\mathfrak{n}}~
\textrm{for some}~t\in T.
\end{equation*}
Then it follows that
\begin{equation}\label{a-l-a-}
\mathfrak{a}'^{-1}=\lambda^{-1}\mathfrak{a}^{-1}=
[\lambda^{-1}\xi_1,\,\lambda^{-1}\xi_2]
\quad\textrm{and}\quad\frac{\lambda^{-1}\xi_1}
{\lambda^{-1}\xi_2}=\xi.
\end{equation}
And, we obtain by (\ref{a'}) and (\ref{a-l-a-}) that
\begin{equation}\label{B}
\begin{bmatrix}\xi_1'\\\xi_2'\end{bmatrix}
=B\begin{bmatrix}\lambda^{-1}\xi_1\\
\lambda^{-1}\xi_2\end{bmatrix}\quad
\textrm{for some}~B\in\mathrm{SL}_2(\mathbb{Z})
\end{equation}
and
\begin{equation}\label{x'Bx}
\xi'=B(\xi).
\end{equation}
On the other hand,
consider $t$ as an integer whose reduction modulo $N$ belongs to $T$.
Since
$\mathfrak{a},\,\mathfrak{a}'=\lambda\mathfrak{a}\subseteq\mathcal{O}_K$,
we see that
$(\lambda-t)\mathfrak{a}$ is an integral ideal.
Moreover, since $\lambda\equiv^*t\Mod{\mathfrak{n}}$
and $\mathfrak{a}$ is relatively prime to $\mathfrak{n}$, we get
$(\lambda-t)\mathfrak{a}\subseteq \mathfrak{n}=N\mathcal{O}_K$,
and hence
\begin{equation*}
(\lambda-t)\mathcal{O}_K\subseteq N\mathfrak{a}^{-1}.
\end{equation*}
Thus we attain by the facts $\mathcal{O}_K=[\tau_K,\,1]$
and $\mathfrak{a}^{-1}=[\xi_1,\,\xi_2]$ that
\begin{equation}\label{A''}
\begin{bmatrix}
(\lambda-t)\tau_K\\\lambda-t
\end{bmatrix}
=A''
\begin{bmatrix}N\xi_1\\N\xi_2\end{bmatrix}
\quad\textrm{for some}~A''\in M_2^+(\mathbb{Z}).
\end{equation}
We then derive that
\begin{eqnarray*}
NA''\begin{bmatrix}\xi_1\\\xi_2\end{bmatrix}&=&
\lambda\begin{bmatrix}\tau_K\\1\end{bmatrix}
-t\begin{bmatrix}\tau_K\\1\end{bmatrix}\quad\textrm{by (\ref{A''})}\\
&=&\lambda A'\begin{bmatrix}\xi_1'\\\xi_2'\end{bmatrix}-
tA\begin{bmatrix}\xi_1\\\xi_2\end{bmatrix}
\quad\textrm{by (\ref{A}) and (\ref{A'})}\\
&=&A'B\begin{bmatrix}\xi_1\\\xi_2\end{bmatrix}-
tA\begin{bmatrix}\xi_1\\\xi_2\end{bmatrix}
\quad\textrm{by (\ref{B})}.
\end{eqnarray*}
This yields $A'B-tA\equiv O\Mod{N M_2(\mathbb{Z})}$ and so
\begin{equation}\label{AtAB}
A'\equiv tAB^{-1}\Mod{N M_2(\mathbb{Z})}.
\end{equation}
Therefore we establish by Proposition \ref{Galoisdecomposition} that
\begin{eqnarray*}
h(\tau)^{\widetilde{A'}}|_{\tau=\xi'}&=&
h(\tau)^{\widetilde{tAB^{-1}}}|_{\tau=\xi'}\quad
\textrm{by (\ref{AtAB})}\\
&&\hspace{2.8cm}\textrm{where $\widetilde{~\cdot~}$ means the reduction onto $\mathrm{GL}_2(\mathbb{Z}/N\mathbb{Z})/\{\pm I_2\}$}
\\
&=&h(\tau)^{\widetilde{
\left[\begin{smallmatrix}1&0\\0&t^2\end{smallmatrix}\right]}
\widetilde{\left[\begin{smallmatrix}t&0\\0&t^{-1}\end{smallmatrix}\right]}
\widetilde{AB^{-1}}}|_{\tau=\xi'}\\
&=&h(\tau)^{\widetilde{
\left[\begin{smallmatrix}t&0\\0&t^{-1}\end{smallmatrix}\right]}
\widetilde{AB^{-1}}}|_{\tau=\xi'}\quad
\textrm{because $h(\tau)$ has rational Fourier coefficients}\\
&=&h(\tau)^{\widetilde{AB^{-1}}}|_{\tau=\xi'}\quad
\textrm{since $h(\tau)$ is modular for $\Gamma$}\\
&=&h(\tau)^{\widetilde{A}}|_{\tau=B^{-1}(\xi')}\\
&=&h(\tau)^{\widetilde{A}}|_{\tau=\xi}\quad
\textrm{by (\ref{x'Bx})}.
\end{eqnarray*}
This prove that $h(C)$ depends only on the class $C$.
\end{proof}

\begin{remark}\label{identityinvariant}
If we let $C_0$ be the identity class in $\mathrm{Cl}(P)$, then
we have $h(C_0)=h(\tau_K)$.
\end{remark}

\begin{proposition}\label{transformation}
Let $C\in\mathrm{Cl}(P)$ and $h\in\mathcal{F}_{\Gamma,\,\mathbb{Q}}$. If $h(C)$ is finite, then
it belongs to $K_P$ and satisfies
\begin{equation*}
h(C)^{\sigma(C')}=h(CC')\quad\textrm{for all}~C'\in
\mathrm{Cl}(P)
\end{equation*}
where $\sigma:\mathrm{Cl}(P)\rightarrow\mathrm{Gal}(K_P/K)$
is the isomorphism induced from the Artin map.
\end{proposition}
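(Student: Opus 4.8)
The plan is to reduce everything to Shimura's reciprocity law (Proposition \ref{reciprocity}), using the idele $s(t)$ constructed in Lemma \ref{subgroup} to realize the Artin action of a class represented by $t\mathcal{O}_K$ concretely on the Fourier coefficients. First I would show that $h(C)\in K_P$. Since $K_P$ corresponds to $K^*q_{\tau_K}^{-1}(S)$ by Proposition \ref{KPKF} and Proposition \ref{model}(iv), it suffices by Shimura's reciprocity law to check that $h(C)^{[s^{-1},\,K]}=h(C)$ for every $s\in S':=K^*q_{\tau_K}^{-1}(S)$, equivalently that the value is fixed by the corresponding Galois automorphisms. Concretely, writing $h(C)=h(\tau)^{\widetilde{A}}|_{\tau=\xi}$ from Definition \ref{invariant}, I would observe that $h(\tau)^{\widetilde{A}}$ is again an element of $\mathcal{F}_N$, and that for $s\in S'$ one has $q_\xi(s)\in S$ after conjugating by $A$; the invariance $h(\tau)^{\widetilde A \cdot q_\xi(s)}=h(\tau)^{\widetilde A}$ then follows from the definition of $\mathcal{F}_S=\mathcal{F}_{\Gamma,\,\mathbb{Q}}$ as the fixed field of $S$, together with the relation $A\,q_\xi(s)\,A^{-1}$ lying in the appropriate local group. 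This shows $h(C)$ is fixed under $\mathrm{Gal}(K^\mathrm{ab}/K_P)$ and hence lies in $K_P$.

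Next I would establish the transformation formula. Fix $C'\in\mathrm{Cl}(P)$ and let $\sigma(C')\in\mathrm{Gal}(K_P/K)$ be its image under the Artin isomorphism. Choose an idele $s'\in\widehat{K}^*$ with $\phi_{\mathfrak{n}}(s')$ representing $C'$, so that $[s'^{-1},\,K]$ restricts to $\sigma(C')$ on $K_P$. Applying Shimura's reciprocity law to the modular function $h(\tau)^{\widetilde A}$ evaluated at $\xi$ gives
\begin{equation*}
h(C)^{\sigma(C')}=\bigl(h(\tau)^{\widetilde A}|_{\tau=\xi}\bigr)^{[s'^{-1},\,K]}
=\bigl(h(\tau)^{\widetilde A}\bigr)^{q_\xi(s')}\big|_{\tau=\xi}.
\end{equation*}
The task is then to identify the right-hand side with $h(CC')$. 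I would decompose $q_\xi(s')=\widetilde{A}^{-1}\,\widetilde{A''}\,\widetilde{A}$ for a suitable matrix $A''\in M_2^+(\mathbb{Z})$ attached to the product class $CC'$, so that $\widetilde{A}\cdot q_\xi(s')=\widetilde{A''}\cdot(\text{translation of }\xi)$; the action of $q_\xi(s')$ splits via Proposition \ref{Galoisdecomposition} into a $\mathrm{SL}_2$-part implementing a fractional linear transformation on $\xi$ and a determinant part acting on the cyclotomic coefficients, but the latter acts trivially because $h$ has rational Fourier coefficients and $h$ is $\Gamma$-modular (exactly the cancellation already exploited in the previous proposition via \eqref{AtAB}). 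What remains is a bookkeeping computation showing that the resulting value equals $h(\tau)^{\widetilde{A''}}|_{\tau=\xi''}$, where $\mathfrak{a}''$ is an integral ideal in the class $CC'$ with $\mathfrak{a}''^{-1}=[\xi_1'',\,\xi_2'']$ and $\xi''=\xi_1''/\xi_2''$; this is precisely $h(CC')$ by Definition \ref{invariant}.

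The main obstacle I expect is the explicit matching in the last step: one must track how the idele $s'$ representing $C'$ transports the lattice $\mathfrak{a}^{-1}=[\xi_1,\,\xi_2]$ to a lattice representing $(CC')^{-1}$, and verify that the normalized embedding $q_\xi$ conjugated by $\widetilde A$ sends $s'$ to a matrix whose $\mathrm{SL}_2$-component, acting on $\tau$, lands the evaluation point at $\xi''$. Concretely this requires choosing $s'$ so that $\nu_{s'}s'\mathcal{O}_K$ and the auxiliary matrix $A$ from \eqref{A} are compatible, and then invoking the multiplicativity of the normalized embedding $q_\omega(\nu\nu')=q_\omega(\nu)q_\omega(\nu')$ together with the transformation relation $[\omega_Q,\,1]=j(\gamma,\,\omega_{Q'})^{-1}[\omega_{Q'},\,1]$. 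Once the determinant part is dispatched by rationality of the Fourier coefficients, this reduces to an identity between two descriptions of the same fractional ideal, which I would verify by taking determinants and comparing bases as in the derivation of \eqref{AtAB}. The subtlety is purely in keeping the local-global dictionary consistent; no genuinely new input beyond Propositions \ref{reciprocity}, \ref{Galoisdecomposition} and \ref{KPKF} is needed.
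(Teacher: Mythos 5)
Your plan for the transformation formula is essentially the paper's proof: choose an idele representing $C'$, apply Shimura's reciprocity law (Proposition \ref{reciprocity}) to $h(\tau)^{\widetilde{A}}$ at $\xi$, and then identify the result with $h(CC')$ by comparing $\mathbb{Z}_p$-bases of the lattice $(\mathfrak{a}\mathfrak{a}')^{-1}$. Two points of divergence are worth noting. First, the paper normalizes the idele by $s_p=1$ for $p\mid N$ and $s_p\mathcal{O}_{K,p}=\mathfrak{a}'_p$ for $p\nmid N$; this forces the local matrices $\gamma_p$ in $q_{\xi,p}(s_p^{-1})=\gamma_pB^{-1}$ to satisfy $\gamma_pB^{-1}=I_2$ at $p\mid N$, so the mod-$N$ action is exactly $\widetilde{B}$ and \emph{no} appeal to rationality of Fourier coefficients or $\Gamma$-modularity is needed at this step --- that cancellation mechanism (as in \eqref{AtAB}) is what proves well-definedness of $h(C)$ and is used again in Theorem \ref{Galoisgroups}, but it is not what dispatches the determinant part here; if you take an unnormalized $s'$ you will need both it and the already-proved independence of $h(C)$ from the choice of $\mathfrak{a}$. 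Second, for membership $h(C)\in K_P$ you argue directly that $h(C)$ is fixed by the subgroup $K^*q_{\tau_K}^{-1}(S)$, via the conjugation identity $q_\xi(s)=A^{-1}q_{\tau_K}(s)A$. This identity is correct, but the resulting element need not lie in $\prod_p\mathrm{GL}_2(\mathbb{Z}_p)$ at primes dividing $\det(A)=\mathrm{N}_{K/\mathbb{Q}}(\mathfrak{a})$, so ``$h^{\widetilde{A}\cdot q_\xi(s)}=h^{\widetilde{A}}$ by the definition of $\mathcal{F}_S$'' requires the same local decomposition bookkeeping all over again; the paper sidesteps this entirely by first proving the transformation formula, then setting $C'=C^{-1}$ to get $h(C)=h(\tau_K)^{[s^{-1},\,K]}$, which lies in $K_P$ by Proposition \ref{KPKF} since $K_P/K$ is abelian. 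Your route is workable but strictly harder; the paper's order of deduction buys the membership statement for free.
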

\begin{proof}
Let $\mathfrak{a}$ be an integral ideal in $C$ and $\xi_1,\,\xi_2\in K^*$ such that
\begin{equation}\label{ax}
\mathfrak{a}^{-1}=[\xi_1,\,\xi_2]\quad
\textrm{with}~\xi=\frac{\xi_1}{\xi_2}\in\mathbb{H}.
\end{equation}
Then we have
\begin{equation}\label{tAx}
\begin{bmatrix}\tau_K\\1\end{bmatrix}=A\begin{bmatrix}
\xi_1\\\xi_2\end{bmatrix}\quad\textrm{for some}~A\in M_2^+(\mathbb{Z}).
\end{equation}
Furthermore, let $\mathfrak{a}'$ be an integral ideal in $C'$ and $\xi_1'',\,\xi_2''\in K^*$ such that
\begin{equation}\label{aax}
(\mathfrak{a}\mathfrak{a}')^{-1}=[\xi_1'',\,\xi_2'']
\quad\textrm{with}~\xi''=\frac{\xi_1''}{\xi_2''}\in\mathbb{H}.
\end{equation}
Since $\mathfrak{a}^{-1}\subseteq
(\mathfrak{a}\mathfrak{a}')^{-1}$ and $\xi''\in\mathbb{H}$, we get
\begin{equation}\label{xBx}
\begin{bmatrix}\xi_1\\\xi_2\end{bmatrix}=B\begin{bmatrix}
\xi_1''\\\xi_2''\end{bmatrix}\quad
\textrm{for some}~B\in M_2^+(\mathbb{Z}),
\end{equation}
and so it follows from (\ref{tAx}) that
\begin{equation}\label{tABx}
\begin{bmatrix}\tau_K\\1\end{bmatrix}=
AB\begin{bmatrix}\xi_1''\\\xi_2''\end{bmatrix}.
\end{equation}
Let $s=(s_p)$ be an idele in $\widehat{K}^*$ satisfying
\begin{equation}\label{s1pN}
\left\{\begin{array}{ll}
s_p=1 & \textrm{if}~p\,|\,N,\\
s_p\mathcal{O}_{K,\,p}=\mathfrak{a}_p' & \textrm{if}~
p\nmid N
\end{array}\right.
\end{equation}
where $\mathfrak{a}_p'=\mathfrak{a}'\otimes_\mathbb{Z}
\mathbb{Z}_p$. Since
$\mathfrak{a}'$ is relatively prime to $\mathfrak{n}=N\mathcal{O}_K$, we
obtain by (\ref{s1pN}) that
\begin{equation}\label{sOa}
s_p^{-1}\mathcal{O}_{K,\,p}=\mathfrak{a}_p'^{-1}\quad
\textrm{for all $p$}.
\end{equation}
Now, we see that
\begin{equation*}
q_{\xi,\,p}(s_p^{-1})
\begin{bmatrix}\xi_1\\\xi_2\end{bmatrix}
=\xi_2q_{\xi,\,p}(s_p^{-1})
\begin{bmatrix}\xi\\1\end{bmatrix}
=\xi_2s_p^{-1}\begin{bmatrix}\xi\\1\end{bmatrix}=
s_p^{-1}\begin{bmatrix}\xi_1\\\xi_2\end{bmatrix},
\end{equation*}
which shows by (\ref{ax}) and (\ref{sOa}) that $q_{\xi,\,p}(s_p^{-1})\begin{bmatrix}\xi_1\\\xi_2\end{bmatrix}$
is a $\mathbb{Z}_p$-basis for
$(\mathfrak{a}\mathfrak{a}')_p^{-1}$. Furthermore,
$B^{-1}\begin{bmatrix}\xi_1\\\xi_2\end{bmatrix}$
is also a $\mathbb{Z}_p$-basis for  $(\mathfrak{a}\mathfrak{a}')^{-1}_p$ by (\ref{aax}) and (\ref{xBx}). Thus we achieve
\begin{equation}\label{qpsgB}
q_{\xi,\,p}(s_p^{-1})=\gamma_pB^{-1}\quad
\textrm{for some}~\gamma_p\in\mathrm{GL}_2(\mathbb{Z}_p).
\end{equation}
Letting $\gamma=(\gamma_p)\in\prod_p\mathrm{GL}_2(\mathbb{Z}_p)$ we get
\begin{equation}\label{qsgB}
q_\xi(s^{-1})=\gamma B^{-1}.
\end{equation}
We then deduce that
\begin{eqnarray*}
h(C)^{[s,\,K]}&=&(h(\tau)^{\widetilde{A}}|_{\tau=\xi})^{[s,\,K]}\quad
\textrm{by Definition \ref{invariant}}\\
&=&(h(\tau)^{\widetilde{A}})^{\sigma(q_\xi(s^{-1}))}|_{\tau=\xi}
\quad\textrm{by Propositioin \ref{reciprocity}}\\
&=&(h(\tau)^{\widetilde{A}})^{\sigma(\gamma B^{-1})}|_{\tau=\xi}
\quad\textrm{by (\ref{qsgB})}\\
&=&h(\tau)^{\widetilde{A}\widetilde{G}}
|_{\tau=B^{-1}(\xi)}\quad
\textrm{where $G$ is
a matrix in $M_2(\mathbb{Z})$
such that}\\
&&\hspace{3.1cm}G\equiv\gamma_p
\Mod{N M_2(\mathbb{Z}_p)}~\textrm{for all}~p\,|\,N\\
&=&h(\tau)^{\widetilde{A}\widetilde{B}}|_{\tau=\xi''}
\quad\textrm{by (\ref{xBx}) and the fact that for each $p\,|\,N$,}\\
&&\hspace{2.5cm}\textrm{$s_p=1$ and so $\gamma_p B^{-1}=I_2$ owing to (\ref{s1pN}) and (\ref{qpsgB})}\\
&=&h(CC')\quad\textrm{by Definition \ref{invariant} and (\ref{tABx})}.
\end{eqnarray*}
In particular, if we consider the case where $C'=C^{-1}$, then
we derive that
\begin{equation*}
h(C)=h(CC')^{[s^{-1},\,K]}=h(C_0)^{[s^{-1},\,K]}
=h(\tau_K)^{[s^{-1},\,K]}.
\end{equation*}
This implies that $h(C)$ belongs to $K_P$ by Proposition \ref{KPKF}.
\par
For each $p\,\nmid\,N$ and $\mathfrak{p}$ lying above $p$, we
have by (\ref{s1pN}) that $\mathrm{ord}_\mathfrak{p}~
s_p=\mathrm{ord}_\mathfrak{p}~\mathfrak{a}'$, and hence
\begin{equation*}
[s,\,K]|_{K_P}=\sigma(C').
\end{equation*}
Therefore we conclude
\begin{equation*}
h(C)^{\sigma(C')}=h(CC').
\end{equation*}
\end{proof}

\section {Extended form class groups as Galois groups}

With $P$, $K_P$ and $\Gamma$ as in $\S$\ref{classinvariants},
we shall prove our main theorem which asserts that $\mathcal{Q}_N(d_K)/\sim_\Gamma$ can be regarded
as a group isomorphic to $\mathrm{Gal}(K_P/K)$ through the isomorphism described in (\ref{desiredisomorphism}).

\begin{lemma}\label{unit}
If $Q\in\mathcal{Q}(d_K)$ and $\gamma\in I_{\omega_Q}$, then
$j(\gamma,\,\omega_Q)\in\mathcal{O}_K^*$.
\end{lemma}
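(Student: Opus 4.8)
The plan is to recognize $j(\gamma,\,\omega_Q)$ as an eigenvalue of $\gamma$ and then read off its arithmetic from the characteristic polynomial of $\gamma$ over $\mathbb{Z}$. Write $\gamma=\begin{bmatrix}a&b\\c&d\end{bmatrix}\in\mathrm{SL}_2(\mathbb{Z})$ and set $\lambda=j(\gamma,\,\omega_Q)=c\omega_Q+d$. The hypothesis $\gamma\in I_{\omega_Q}$ means $\gamma(\omega_Q)=\omega_Q$, i.e.\ $(a\omega_Q+b)/(c\omega_Q+d)=\omega_Q$, which rearranges to $a\omega_Q+b=\lambda\omega_Q$. Together with $c\omega_Q+d=\lambda$ this is precisely the eigenvector relation
\begin{equation*}
\gamma\begin{bmatrix}\omega_Q\\1\end{bmatrix}
=\lambda\begin{bmatrix}\omega_Q\\1\end{bmatrix},
\end{equation*}
so $\begin{bmatrix}\omega_Q\\1\end{bmatrix}$ is an eigenvector of $\gamma$ and $\lambda$ the corresponding eigenvalue.

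Next I would extract the algebraic properties of $\lambda$ from the characteristic polynomial. Since $\gamma\in\mathrm{SL}_2(\mathbb{Z})$, this polynomial is $x^2-(a+d)x+1\in\mathbb{Z}[x]$, monic with constant term $1$. As $\lambda$ is a root, it is an algebraic integer; moreover $\lambda\in K$ because $\omega_Q=(-b+\sqrt{d_K})/(2a)\in K$ and $c,\,d\in\mathbb{Z}$, whence $\lambda\in\mathcal{O}_K$. Finally, the product of the two roots equals the constant term $1$, so the other root $\lambda^{-1}$ is also an algebraic integer in $K$, giving $\lambda^{-1}\in\mathcal{O}_K$ as well; therefore $\lambda\in\mathcal{O}_K^*$.

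There is no serious obstacle here; the only points needing a line of care are checking $\lambda\neq0$ (since $c\omega_Q+d=0$ would force $c=d=0$, impossible in $\mathrm{SL}_2(\mathbb{Z})$ as $\omega_Q\in\mathbb{H}$) and confirming that integrality of both $\lambda$ and $\lambda^{-1}$ inside $K$ genuinely yields a unit of $\mathcal{O}_K$. Should one prefer to avoid eigenvalues, an equivalent route is to argue directly that $\lambda[\omega_Q,\,1]\subseteq[\omega_Q,\,1]$: indeed $\lambda\cdot1=c\omega_Q+d$ and $\lambda\omega_Q=a\omega_Q+b$ both lie in $[\omega_Q,\,1]$, so $\lambda$ stabilizes the proper fractional $\mathcal{O}_K$-ideal $[\omega_Q,\,1]$ and hence lies in its order $\mathcal{O}_K$, while the norm relation $\mathrm{N}_{K/\mathbb{Q}}(\lambda)=\det(\gamma)=1$ then forces $\lambda\in\mathcal{O}_K^*$. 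I would present the characteristic-polynomial version as the main argument, since it is the shortest.
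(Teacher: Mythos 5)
Your proof is correct. The paper's own argument is your ``alternative route'': it observes that $\gamma\in I_{\omega_Q}$ gives $Q=Q^{\gamma}$, hence $[\omega_Q,\,1]=[\gamma(\omega_Q),\,1]=\frac{1}{j(\gamma,\,\omega_Q)}[\omega_Q,\,1]$, so that multiplication by $j(\gamma,\,\omega_Q)$ is an automorphism of the proper fractional $\mathcal{O}_K$-ideal $[\omega_Q,\,1]$ and therefore $j(\gamma,\,\omega_Q)\in\mathcal{O}_K^*$. Your main argument via the characteristic polynomial is genuinely different in presentation: you identify $\lambda=j(\gamma,\,\omega_Q)$ as an eigenvalue of $\gamma$ on the eigenvector $\begin{bmatrix}\omega_Q\\1\end{bmatrix}$, so $\lambda$ is a root of the monic polynomial $x^2-\mathrm{tr}(\gamma)x+1\in\mathbb{Z}[x]$, whence $\lambda\in\mathcal{O}_K$ and $\lambda^{-1}=\mathrm{tr}(\gamma)-\lambda\in\mathcal{O}_K$. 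This buys you a completely self-contained, elementary verification that does not invoke the fact that $[\omega_Q,\,1]$ is a proper $\mathcal{O}_K$-ideal nor the standard description of the stabilizer of a fractional ideal, at the cost of a slightly longer computation; the paper's version is shorter because it leans on lattice facts already set up in \S 2. The only blemishes are cosmetic: you reuse the letters $a,\,b$ both for the entries of $\gamma$ and (implicitly, via the formula for $\omega_Q$) for the coefficients of $Q$, which should be disambiguated, and in the lattice variant you only record the inclusion $\lambda[\omega_Q,\,1]\subseteq[\omega_Q,\,1]$ and then finish with the norm, whereas the paper uses the equality of lattices directly. Neither affects correctness.
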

\begin{proof}
We obtain from $Q=Q^\gamma$ that
\begin{equation*}
[\omega_Q,\,1]=[\gamma(\omega_Q),\,1]
=\frac{1}{j(\gamma,\,\omega_Q)}[\omega_Q,\,1].
\end{equation*}
This claims that $j(\gamma,\,\omega_Q)$ is a unit in $\mathcal{O}_K$.
\end{proof}

\begin{remark}
This lemma can be also justified by using (\ref{isotropy}), (\ref{otherisotropy}) and the property
\begin{equation}\label{jabt}
j(\alpha\beta,\,\tau)=j(\alpha,\,\beta(\tau))j(\beta,\,\tau)
\quad(\alpha,\,\beta\in\mathrm{SL}_2(\mathbb{Z}),\,\tau\in\mathbb{H})
\end{equation}
(\cite[(1.2.4)]{Shimura}).
\end{remark}

\begin{proposition}\label{satisfies}
For given $P$, the group $\Gamma$ satisfies
the property \textup{(\ref{P})}.
\end{proposition}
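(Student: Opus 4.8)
The plan is to reduce property (\ref{P}) to a single congruence computation in $\mathcal{O}_K/\mathfrak{n}$, with the coset factor $I_{\omega_Q}$ bookkeeping the unit ambiguity of ideal generators. Write $Q=ax^2+bxy+cy^2$ and $\gamma=\left[\begin{smallmatrix}p&q\\r&s\end{smallmatrix}\right]$, so $j(\gamma,\omega_Q)=r\omega_Q+s$. The first thing I would record is that $\mathcal{O}_K=[a\omega_Q,\,1]$: indeed $a\omega_Q=(-b+\sqrt{d_K})/2$ differs from $\tau_K$ by a rational integer in each case of (\ref{tauK}). Setting $\theta=a\omega_Q\in\mathcal{O}_K$, the pair $\{\theta,\,1\}$ is a $\mathbb{Z}$-basis of $\mathcal{O}_K$ and hence reduces to a free $\mathbb{Z}/N\mathbb{Z}$-basis of $\mathcal{O}_K/\mathfrak{n}$.

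The technical core is a single-matrix claim: for any $\delta=\left[\begin{smallmatrix}p&q\\r&s\end{smallmatrix}\right]\in\mathrm{SL}_2(\mathbb{Z})$ with $Q^{\delta^{-1}}\in\mathcal{Q}_N(d_K)$, one has $j(\delta,\omega_Q)\equiv^* t\Mod{\mathfrak{n}}$ for some $t\in T$ if and only if $\delta\in\Gamma$. To prove this I would write $j(\delta,\omega_Q)=r\omega_Q+s=(r\theta+as)/a$, observing that the denominator $a$ is prime to $\mathfrak{n}$ (as $\gcd(N,a)=1$) and the numerator $r\theta+as$ is prime to $\mathfrak{n}$ by Lemma \ref{prime}(i) together with the hypothesis $Q^{\delta^{-1}}\in\mathcal{Q}_N(d_K)$. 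By the definition of $\equiv^*$, the congruence $j(\delta,\omega_Q)\equiv^* t$ is then equivalent to $r\theta+as\equiv ta\Mod{\mathfrak{n}}$; comparing coefficients in the basis $\{\theta,\,1\}$ of $\mathcal{O}_K/\mathfrak{n}$ yields exactly $r\equiv0\Mod{N}$ and $s\equiv t\Mod{N}$ (using that $a$ is invertible modulo $N$). Since $\det\delta=1$ and $r\equiv0\Mod{N}$ force $p\equiv s^{-1}\equiv t^{-1}\Mod{N}$, this pair of congruences is precisely the defining condition for $\delta\in\Gamma$, which proves the claim.

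To move from a single matrix to (\ref{P}) I would absorb the choice of ideal generator into the isotropy group. The map $I_{\omega_Q}\to\mathcal{O}_K^*$, $\beta\mapsto j(\beta,\omega_Q)$, is a homomorphism by the cocycle relation (\ref{jabt}) (since $\beta(\omega_Q)=\omega_Q$), lands in $\mathcal{O}_K^*$ by Lemma \ref{unit}, and is injective (from $r\omega_Q+s=1$ and $\beta(\omega_Q)=\omega_Q$ one forces $\beta=I_2$); comparing orders through (\ref{isotropy}) and (\ref{otherisotropy}) — noting that $d_K=-3,\,-4$ is exactly the case where every $\omega_Q$ is $\mathrm{SL}_2(\mathbb{Z})$-equivalent to $\tau_K$, so that the enlarged isotropy occurs for all $Q$ — shows it is an isomorphism onto $\mathcal{O}_K^*$. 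Consequently, as $\beta$ ranges over $I_{\omega_Q}$, the numbers $j(\gamma\beta,\omega_Q)=j(\gamma,\omega_Q)\,j(\beta,\omega_Q)$ run over the full set of generators $\{u\,j(\gamma,\omega_Q)~|~u\in\mathcal{O}_K^*\}$ of the ideal $j(\gamma,\omega_Q)\mathcal{O}_K$.

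Finally I would assemble the pieces. By the definition of $P$, one has $j(\gamma,\omega_Q)\mathcal{O}_K\in P$ iff some generator is $\equiv^* t$ for some $t\in T$, i.e.\ iff $j(\gamma\beta,\omega_Q)\equiv^* t$ for some $\beta\in I_{\omega_Q}$ and $t\in T$. Since $\beta\in I_{\omega_Q}$ gives $Q^\beta=Q$ (same root and discriminant), one has $Q^{(\gamma\beta)^{-1}}=Q^{\gamma^{-1}}\in\mathcal{Q}_N(d_K)$, so the single-matrix claim applies to $\delta=\gamma\beta$ and converts the condition into $\gamma\beta\in\Gamma$ for some $\beta\in I_{\omega_Q}$, that is, $\gamma\in\Gamma\cdot I_{\omega_Q}$. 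The main obstacle is exactly this unit bookkeeping: the factor $I_{\omega_Q}$ in (\ref{P}) reflects the freedom in choosing an ideal generator, and the delicate step is verifying that $\beta\mapsto j(\beta,\omega_Q)$ identifies $I_{\omega_Q}$ with $\mathcal{O}_K^*$ bijectively, including the exceptional discriminants $d_K=-3,\,-4$; everything else is the routine congruence comparison of the second paragraph.
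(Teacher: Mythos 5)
Your proof is correct, and it uses the same two ingredients as the paper's: the congruence computation in $\mathcal{O}_K/\mathfrak{n}$ with respect to the $\mathbb{Z}$-basis $\{a\omega_Q,\,1\}$ of $\mathcal{O}_K$, and the unit bookkeeping through $I_{\omega_Q}$ via Lemma \ref{unit} and the cocycle relation (\ref{jabt}). The decomposition of the argument, however, is genuinely different. The paper treats the two implications of (\ref{P}) asymmetrically: in the forward direction it writes $j(\gamma,\,\omega_Q)\mathcal{O}_K=(\nu_1/\nu_2)\mathcal{O}_K$ with $\nu_i\equiv t_i\Mod{\mathfrak{n}}$, manufactures an auxiliary matrix $\alpha\in\mathrm{SL}_2(\mathbb{Z})$ out of the lattice identity $[\omega_Q,\,1]=\zeta j[\omega_{Q'},\,1]$, and only then runs the congruence comparison on $\alpha$; the backward direction is a separate, shorter computation. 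You instead prove once and for all that $\beta\mapsto j(\beta,\,\omega_Q)$ is an isomorphism of $I_{\omega_Q}$ onto $\mathcal{O}_K^*$ (injectivity plus the order count from (\ref{isotropy}) and (\ref{otherisotropy}), using that the class number is one when $d_K=-3,\,-4$ so the enlarged isotropy group occurs at every $\omega_Q$), which identifies the generator ambiguity of the ideal $j(\gamma,\,\omega_Q)\mathcal{O}_K$ exactly with the coset factor $I_{\omega_Q}$; after that, both implications collapse to your symmetric single-matrix equivalence $j(\delta,\,\omega_Q)\equiv^*t\Mod{\mathfrak{n}}\Leftrightarrow\delta\in\Gamma$, whose verification ($r\equiv0$, $s\equiv t$, and $p\equiv t^{-1}$ forced by $\det\delta=1$) matches the paper's computation. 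Your route buys symmetry and a conceptual explanation of why $I_{\omega_Q}$ appears in (\ref{P}); the paper's route avoids needing surjectivity of $\beta\mapsto j(\beta,\,\omega_Q)$ (and hence the order count at the exceptional discriminants) because the required matrix is produced directly from the ideal equality. One small point worth making explicit, which you only gesture at: $Q^{\beta}=Q$ for $\beta\in I_{\omega_Q}$ (same root, same discriminant, both primitive positive definite), so the hypothesis $Q^{(\gamma\beta)^{-1}}\in\mathcal{Q}_N(d_K)$ needed to apply your claim to $\delta=\gamma\beta$ is indeed inherited from $Q^{\gamma^{-1}}\in\mathcal{Q}_N(d_K)$.
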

\begin{proof}
Let $Q=ax^2+bxy+cy^2\in\mathcal{Q}_N(d_K)$ and $\gamma\in\mathrm{SL}_2(\mathbb{Z})$ such that
$Q^{\gamma^{-1}}\in\mathcal{Q}_N(d_K)$.
\par
Assume that $j(\gamma,\,\omega_Q)\mathcal{O}_K\in P$. Then we have
\begin{equation*}
j(\gamma,\,\omega_Q)\mathcal{O}_K=\frac{\nu_1}{\nu_2}
\mathcal{O}_K\quad\textrm{for some}~\nu_1,\,\nu_2\in\mathcal{O}_K-\{0\}
\end{equation*}
satisfying
\begin{equation}\label{ntnt}
\nu_1\equiv t_1,\,\nu_2\equiv t_2\Mod{\mathfrak{n}}~
\textrm{with}~t_1,\,t_2\in T
\end{equation}
and hence
\begin{equation}\label{zjnn}
\zeta j(\gamma,\,\omega_Q)=\frac{\nu_1}{\nu_2}\quad
\textrm{for some}~\zeta\in\mathcal{O}_K^*.
\end{equation}
For convenience, let
$j=j(\gamma,\,\omega_Q)$ and $Q'=Q^{\gamma^{-1}}$.
Then we deduce
\begin{equation}\label{newQQ}
\gamma(\omega_Q)=\omega_{Q'}
\end{equation}
and
\begin{equation*}
[\omega_Q,\,1]=j[\gamma(\omega_Q),\,1]=j[\omega_{Q'},\,1]=
\zeta j[\omega_{Q'},\,1].
\end{equation*}
So there is $\alpha=\begin{bmatrix}r&s\\
u&v\end{bmatrix}\in\mathrm{GL}_2(\mathbb{Z})$ which yields
\begin{equation}\label{zz}
\begin{bmatrix}\zeta j\omega_{Q'}\\
\zeta j\end{bmatrix}=\alpha
\begin{bmatrix}\omega_Q\\1\end{bmatrix}.
\end{equation}
Here, since $\zeta j\omega_{Q'}/\zeta j=\omega_{Q'},\,
\omega_Q\in\mathbb{H}$, we get
$\alpha\in\mathrm{SL}_2(\mathbb{Z})$ and
\begin{equation}\label{newww}
\omega_{Q'}=\alpha(\omega_Q).
\end{equation}
Thus we attain $\gamma(\omega_Q)=\omega_{Q'}=\alpha(\omega_Q)$
by (\ref{newQQ}) and (\ref{newww}), from which we get
$\omega_Q=(\alpha^{-1}\gamma)(\omega_Q)$ and so
\begin{equation}\label{gaI}
\gamma\in\alpha\cdot I_{\omega_Q}.
\end{equation}
Now that $aj\in\mathcal{O}_K$, we
see from (\ref{ntnt}), (\ref{zjnn}) and (\ref{zz}) that
\begin{eqnarray*}
&&a\nu_2(\zeta j)\equiv a\nu_1\equiv at_1\Mod{\mathfrak{n}},~
\textrm{and}\\
&&a\nu_2(\zeta j)\equiv
a\nu_2(u\omega_Q+v)\equiv
ut_2(a\omega_Q)+at_2v\Mod{\mathfrak{n}}.
\end{eqnarray*}
It then follows that
\begin{equation*}
at_1\equiv ut_2(a\omega_Q)+at_2v\Mod{\mathfrak{n}}
\end{equation*}
and hence
\begin{equation*}
ut_2(a\omega_Q)+a(t_2v-t_1)\equiv0\Mod{\mathfrak{n}}.
\end{equation*}
Since $\mathfrak{n}=N\mathcal{O}_K=N[a\omega_Q,\,1]$, we have
\begin{equation*}
ut_2\equiv0\Mod{N}\quad\textrm{and}\quad
a(t_2v-t_1)\equiv0\Mod{N}.
\end{equation*}
Moreover, since $\gcd(N,\,t_1)=\gcd(N,\,t_2)=\gcd(N,\,a)=1$, we achieve that
\begin{equation*}
u\equiv0\Mod{N}\quad\textrm{and}\quad
v\equiv t_1t_2^{-1}\Mod{N}
\end{equation*}
where $t_2^{-1}$ is an integer satisfying $t_2t_2^{-1}\equiv1\Mod{N}$. This, together with
the facts $\det(\alpha)=1$ and $T$ is a subgroup of
$(\mathbb{Z}/N\mathbb{Z})^*$, implies
$\alpha=\begin{bmatrix}r&s\\u&v\end{bmatrix}\in\Gamma$.
Therefore we conclude $\gamma\in \Gamma\cdot I_{\omega_Q}$
by (\ref{gaI}), as desired.
\par
Conversely, assume that
$\gamma\in\Gamma\cdot I_{\omega_Q}$, and so
\begin{equation*}
\gamma=\alpha\beta\quad
\textrm{for some}~\alpha=
\begin{bmatrix}r&s\\u&v\end{bmatrix}\in\Gamma~
\textrm{and}~\beta\in I_{\omega_Q}.
\end{equation*}
Here we observe that
\begin{equation}\label{u0vt}
u\equiv0\Mod{N}\quad\textrm{and}\quad
v\equiv t\Mod{N}~\textrm{for some}~t\in T.
\end{equation}
We then derive that
\begin{eqnarray*}
j(\gamma,\,\omega_Q)&=&j(\alpha\beta,\,\omega_Q)\\
&=&j(\alpha,\,\beta(\omega_Q))j(\beta,\,\omega_Q)\quad
\textrm{by (\ref{jabt})}\\
&=&j(\alpha,\,\omega_Q)\zeta\quad\textrm{for some}~
\zeta\in\mathcal{O}_K^*~\textrm{by the fact
$\beta\in I_{\omega_Q}$ and Lemma \ref{unit}}.
\end{eqnarray*}
Thus we attain
\begin{equation*}
\zeta^{-1}j(\gamma,\,\omega_Q)-v
=j(\alpha,\,\omega_Q)-v
=(u\omega_Q+v)-v
=\frac{1}{a}\{u(a\omega_Q)\}.
\end{equation*}
And, it follows from the fact $\gcd(N,\,a)=1$ and (\ref{u0vt}) that
\begin{equation*}
\zeta^{-1}j(\gamma,\,\omega_Q)\equiv^*v
\equiv^*t\Mod{\mathfrak{n}}.
\end{equation*}
This shows that $\zeta^{-1}j(\gamma,\,\omega_Q)
\mathcal{O}_K\in P$, and hence $j(\gamma,\,\omega_Q)\mathcal{O}_K\in P$.
\par
Therefore, the group $\Gamma$ satisfies the property (\ref{P}) for $P$.
\end{proof}

\begin{theorem}\label{Galoisgroups}
We have an isomorphism
\begin{equation}\label{Galoisisomorphism}
\begin{array}{ccl}
\mathcal{Q}_N(d_K)/\sim_\Gamma&\rightarrow&
\mathrm{Gal}(K_P/K)\\
\left[Q\right]&\mapsto&
\left(h(\tau_K)\mapsto h(-\overline{\omega}_Q)~|~
h\in\mathcal{F}_{\Gamma,\,\mathbb{Q}}~
\textrm{is finite at}~\tau_K\right).
\end{array}
\end{equation}
\end{theorem}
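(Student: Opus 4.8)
The plan is to realize the displayed map as the composite of the bijection $\phi_\Gamma$ of Theorem~\ref{formclassgroup} with the Artin isomorphism, and then to pin down its effect on the generators $h(\tau_K)$ of $K_P/K$ by means of the class invariants of Section~\ref{classinvariants}. First, by Proposition~\ref{satisfies} the group $\Gamma$ satisfies the property~(\ref{P}), so Theorem~\ref{formclassgroup} guarantees that
\[
\phi_\Gamma:\mathcal{Q}_N(d_K)/\sim_\Gamma\to\mathrm{Cl}(P),\qquad [Q]\mapsto C_Q:=[[\omega_Q,\,1]]
\]
is a well-defined group isomorphism. Composing it with the Artin isomorphism $\sigma:\mathrm{Cl}(P)\stackrel{\sim}{\to}\mathrm{Gal}(K_P/K)$ produces a group isomorphism $\Psi:=\sigma\circ\phi_\Gamma$. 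Since $K_P=K\mathcal{F}_{\Gamma,\,\mathbb{Q}}(\tau_K)$ by Proposition~\ref{KPKF}, the field $K_P$ is generated over $K$ by the finite values $h(\tau_K)$ with $h\in\mathcal{F}_{\Gamma,\,\mathbb{Q}}$, so it suffices to show that $\Psi([Q])$ acts on each such $h(\tau_K)$ by $h(\tau_K)\mapsto h(-\overline{\omega}_Q)$. This will at once establish that the rule in~(\ref{Galoisisomorphism}) is well defined and that it equals $\Psi$, hence is an isomorphism.

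Next I would reduce everything to a single identity between class invariants. By Remark~\ref{identityinvariant} we have $h(\tau_K)=h(C_0)$ for the identity class $C_0$, and Proposition~\ref{transformation} applied with $C=C_0$ and $C'=C_Q$ gives
\[
\Psi([Q])\bigl(h(\tau_K)\bigr)=h(C_0)^{\sigma(C_Q)}=h(C_0C_Q)=h(C_Q).
\]
Thus the theorem becomes equivalent to the assertion $h(C_Q)=h(-\overline{\omega}_Q)$ for every $h\in\mathcal{F}_{\Gamma,\,\mathbb{Q}}$ finite at $\tau_K$ (finiteness at $-\overline{\omega}_Q$ then follows, since the two values will be equal).

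To prove this I would compute $h(C_Q)$ from Definition~\ref{invariant} using a judicious representative. Write $Q=ax^2+bxy+cy^2$; as $\gcd(N,\,a)=1$, choose a positive integer $m$ with $am\equiv1\Mod{N}$ and set $\lambda=am$, $\mathfrak{a}=\lambda[\omega_Q,\,1]$. Since $\lambda\in[\omega_Q,\,1]^{-1}=[a\overline{\omega}_Q,\,a]$ the ideal $\mathfrak{a}$ is integral, and since $\lambda\equiv1\Mod{\mathfrak{n}}$ we get $\lambda\mathcal{O}_K\in P_{K,\,1}(\mathfrak{n})\subseteq P$, so $\mathfrak{a}$ is an integral ideal in the class $C_Q$ with $\mathrm{N}_{K/\mathbb{Q}}(\mathfrak{a})=am^2$ prime to $N$ (using $\mathrm{N}_{K/\mathbb{Q}}([\omega_Q,\,1])=1/a$ from Lemma~\ref{primein}). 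One computes $\mathfrak{a}^{-1}=m^{-1}[-\overline{\omega}_Q,\,1]$, so that taking $\xi_1=-m^{-1}\overline{\omega}_Q$, $\xi_2=m^{-1}$ yields $\xi=\xi_1/\xi_2=-\overline{\omega}_Q\in\mathbb{H}$. Using $\mathcal{O}_K=[a\omega_Q,\,1]$ one then solves $\left[\begin{smallmatrix}\tau_K\\1\end{smallmatrix}\right]=A\left[\begin{smallmatrix}\xi_1\\\xi_2\end{smallmatrix}\right]$ for $A\in M_2^+(\mathbb{Z})$ and finds that, modulo $N$, the lower row of $A$ is $(0,\,m)$, while $\det(A)=\mathrm{N}_{K/\mathbb{Q}}(\mathfrak{a})=am^2\equiv m\Mod{N}$. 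Decomposing $\widetilde{A}$ as in Proposition~\ref{Galoisdecomposition}, the factor $\left[\begin{smallmatrix}1&0\\0&\det(A)\end{smallmatrix}\right]$ acts on the rational Fourier coefficients of $h$ through $\sigma_{\det(A)}$ and hence trivially, and the residual $\mathrm{SL}_2$-factor is $\equiv\left[\begin{smallmatrix}1&*\\0&1\end{smallmatrix}\right]\Mod{N}$, i.e.\ lies in $\Gamma_1(N)\subseteq\Gamma$, so it too fixes $h$. Therefore $h(\tau)^{\widetilde{A}}=h(\tau)$, giving $h(C_Q)=h(\tau)^{\widetilde{A}}|_{\tau=-\overline{\omega}_Q}=h(-\overline{\omega}_Q)$, as required.

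The main obstacle is exactly this last matrix verification, namely that $\widetilde{A}$ lands in the subgroup of $\mathrm{GL}_2(\mathbb{Z}/N\mathbb{Z})/\{\pm I_2\}$ fixing $\mathcal{F}_{\Gamma,\,\mathbb{Q}}$. The delicate point is that the determinant normalization in Proposition~\ref{Galoisdecomposition}(i) is indispensable: a naive demand that the lower-right entry of $A$ itself lie in $T$ would be too strong, and it is only after factoring out $\left[\begin{smallmatrix}1&0\\0&\det(A)\end{smallmatrix}\right]$ that the congruence $am\equiv1\Mod{N}$ (equivalently $\det(A)\equiv a^{-1}$) collapses the lower-right entry of the $\mathrm{SL}_2$-part to $1$, rendering it upper-unitriangular and hence an element of $\Gamma$. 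A minor bookkeeping issue is the change of basis between $(\tau_K,\,1)$ and $(a\omega_Q,\,1)$, which differs according to whether $d_K\equiv0$ or $1\Mod{4}$; in both cases it is upper-unitriangular, so it lies in $\Gamma$ and does not affect the conclusion.
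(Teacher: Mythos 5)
Your argument is correct and is essentially the paper's own proof: both compose $\phi_\Gamma$ with the Artin isomorphism, reduce via Proposition \ref{transformation} and Remark \ref{identityinvariant} to the single identity $h(C_Q)=h(-\overline{\omega}_Q)$, and verify it by representing $C_Q$ by the integral ideal $\lambda[\omega_Q,\,1]$ with $\lambda=am\equiv1\Mod{N}$ (the paper simply fixes $m=a^{\varphi(N)-1}$) so that $\widetilde{A}$ factors, via Proposition \ref{Galoisdecomposition}, into a diagonal piece acting trivially on rational Fourier coefficients and an upper-unitriangular piece lying in $\Gamma$. No gaps.
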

\begin{proof}
By Theorem \ref{formclassgroup} and Proposition \ref{satisfies}
one may consider
$\mathcal{Q}_N(d_K)/\sim_\Gamma$ as a group isomorphic to
$I_K(\mathfrak{n})/P$ via the isomorphism $\phi_\Gamma$ in $\S$\ref{sect2}.
Let $C\in\mathrm{Cl}(P)$ and so
\begin{equation*}
C=\phi_\Gamma([Q])=[[\omega_Q,\,1]]
\quad\textrm{for some}~Q\in\mathcal{Q}_N(d_K)/\sim_\Gamma.
\end{equation*}
Note that
$C$ contains an integral ideal
$\mathfrak{a}=a^{\varphi(N)}[\omega_Q,\,1]$, where
$\varphi$ is the Euler totient function. We establish
by Lemma \ref{prime} and the definition (\ref{wQ}) that
\begin{equation*}
\mathfrak{a}^{-1}=\frac{1}{\mathrm{N}_{K/\mathbb{Q}}(\mathfrak{a})}
\overline{\mathfrak{a}}=\frac{1}{a^{\varphi(N)-1}}
[-\overline{\omega}_Q,\,1]
\end{equation*}
and
\begin{equation*}
\begin{bmatrix}\tau_K\\1\end{bmatrix}=
\begin{bmatrix}
a^{\varphi(N)} & -a^{\varphi(N)-1}(b+b_K)/2\\
0&a^{\varphi(N)-1}
\end{bmatrix}
\begin{bmatrix}
-\overline{\omega}_Q/a^{\varphi(N)-1}\\
1/a^{\varphi(N)-1}
\end{bmatrix}
\end{equation*}
where $\min(\tau_K,\,\mathbb{Q})=x^2+b_Kx+c_K$ ($\in\mathbb{Z}[x]$).
We then derive by Proposition \ref{Galoisdecomposition} that
if $h\in\mathcal{F}_{\Gamma,\,\mathbb{Q}}$ is finite at $\tau_K$, then
\begin{eqnarray*}
h(C)&=&h(\tau)^{\widetilde{\left[\begin{smallmatrix}
a^{\varphi(N)}&-a^{\varphi(N)-1}(b+b_K)/2\\
0&a^{\varphi(N)-1}
\end{smallmatrix}\right]}}|_{\tau=-\overline{\omega}_Q}
\quad\textrm{by Definition \ref{invariant}}\\
&&\hspace{5cm}\textrm{where $\widetilde{~\cdot~}$ means the reduction
onto $\mathrm{GL}_2(\mathbb{Z}/N\mathbb{Z})/\{\pm I_2\}$}\\
&=&h(\tau)^{\widetilde{\left[\begin{smallmatrix}
1 & -a^{-1}(b+b_K)/2\\
0&a^{-1}
\end{smallmatrix}\right]}}|_{\tau=-\overline{\omega}_Q}
\quad\textrm{since}~a^{\varphi(N)}\equiv1\Mod{N}\\
&&\hspace{5cm}\textrm{where $a^{-1}$ is an integer such that
$aa^{-1}\equiv1\Mod{N}$}\\
&=&h(\tau)^{\widetilde{\left[\begin{smallmatrix}
1&0\\0&a^{-1}
\end{smallmatrix}\right]}
\widetilde{\left[\begin{smallmatrix}
1 &-a^{-1}(b+b_K)/2\\0&1
\end{smallmatrix}\right]}}|_{\tau=-\overline{\omega}_Q}\\
&=&h(\tau)^{\widetilde{\left[\begin{smallmatrix}
1 &-a^{-1}(b+b_K)/2\\0&1
\end{smallmatrix}\right]}}|_{\tau=-\overline{\omega}_Q}
\quad\textrm{because $h(\tau)$ has rational Fourier coefficients}\\
&=&h(\tau)|_{\tau=-\overline{\omega}_Q}
\quad\textrm{since $h(\tau)$ is modular for $\Gamma$}\\
&=&h(-\overline{\omega}_Q).
\end{eqnarray*}
Now, the isomorphism $\phi_\Gamma$ followed by
the isomorphism
\begin{equation*}
\begin{array}{ccl}
\mathrm{Cl}(P)&\rightarrow&\mathrm{Gal}(K_P/K)\\
C&\mapsto&\left(
h(\tau_K)=h(C_0)\mapsto
h(C_0)^{\sigma(C)}=h(C)=h(-\overline{\omega}_Q)~|~
h\in\mathcal{F}_{\Gamma,\,\mathbb{Q}}~
\textrm{is finite at}~\tau_K\right)
\end{array}
\end{equation*}
which is induced from Propositions \ref{KPKF}, \ref{transformation}
and Remark \ref{identityinvariant}, yields the isomorphism stated in (\ref{Galoisisomorphism}),
as desired.
\end{proof}

\begin{remark}\label{difference}
In \cite{E-K-S17} Eum, Koo and Shin
considered only the case
where $K\neq\mathbb{Q}(\sqrt{-1}),\,\mathbb{Q}(\sqrt{-3})$,
$P=P_{K,\,1}(\mathfrak{n})$ and $\Gamma=\Gamma_1(N)$.
As for the group operation of $\mathcal{Q}_N(d_K)/\sim_{\Gamma_1(N)}$
one can refer to \cite[Remark 2.10]{E-K-S17}.
They
established an isomorphism
\begin{equation}\label{rayGaloisgroups}
\begin{array}{lcl}
\mathcal{Q}_N(d_K)/\sim_{\Gamma_1(N)}&\rightarrow&\mathrm{Gal}(K_\mathfrak{n}/K)\\
\left[Q\right]=\left[ax^2+bxy+cy^2\right]&\mapsto&
\left(
h(\tau_K)\mapsto h(\tau)^{\widetilde{\left[
\begin{smallmatrix}a&(b-b_K)/2\\0&1\end{smallmatrix}\right]}}|_{\tau=
\omega_Q}~|~h(\tau)\in\mathcal{F}_N~
\textrm{is finite at $\tau_K$}\right).
\end{array}
\end{equation}
The difference between the isomorphisms described in (\ref{Galoisisomorphism}) and (\ref{rayGaloisgroups}) arises from the Definition \ref{invariant}
of $h(C)$. The invariant $h_\mathfrak{n}(C)$ appeared in \cite[Definition 3.3]{E-K-S17} coincides with $h(C^{-1})$.
\end{remark}

\section {Finding representatives of extended form classes}

In this last section, by improving the proof of
Proposition \ref{surjective} further, we shall explain how to find
all quadratic forms which represent distinct classes in
$\mathcal{Q}_N(d_K)/\sim_\Gamma$.
\par
For a given $Q=ax^2+bxy+cy^2\in\mathcal{Q}_N(d_K)$ we define an equivalence
relation $\equiv_Q$ on $M_{1,\,2}(\mathbb{Z})$ as follows:
Let $\begin{bmatrix}r&s\end{bmatrix},\,
\begin{bmatrix}u&v\end{bmatrix}\in M_{1,\,2}(\mathbb{Z})$. Then,
$\begin{bmatrix}r&s\end{bmatrix}\equiv_Q\begin{bmatrix}u&v\end{bmatrix}$
if and only if
\begin{equation*}
\begin{bmatrix}r&s\end{bmatrix}
\equiv\pm t\begin{bmatrix}u&v\end{bmatrix}\gamma\Mod{NM_{1,\,2}(\mathbb{Z})}
\quad\textrm{for some}~t\in T~\textrm{and}~\gamma\in
\Gamma_Q
\end{equation*}
where
\begin{equation*}
\Gamma_Q=\left\{\begin{array}{ll}
\left\{\pm I_2\right\} & \textrm{if}~d_K\neq-4,\,-3,\vspace{0.1cm}\\
\left\{\pm I_2,\,
\pm\begin{bmatrix}
-b/2&-a^{-1}(b^2+4)/4)\\a&b/2
\end{bmatrix}
\right\} & \textrm{if}~d_K=-4,\vspace{0.1cm}\\
\left\{\pm I_2,\,
\pm\begin{bmatrix}
-(b+1)/2&-a^{-1}(b^2+3)/4\\a&(b-1)/2
\end{bmatrix},\,
\pm\begin{bmatrix}
(b-1)/2&a^{-1}(b^2+3)/4\\-a&-(b+1)/2
\end{bmatrix}
\right\} & \textrm{if}~d_K=-3.
\end{array}
\right.
\end{equation*}
Here, $a^{-1}$ is an integer satisfying $aa^{-1}\equiv1\Mod{N}$.

\begin{lemma}\label{equivGamma}
Let $Q=ax^2+bxy+cy^2\in\mathcal{Q}_N(d_K)$ and
$\begin{bmatrix}r&s\end{bmatrix},\,
\begin{bmatrix}u&v\end{bmatrix}\in M_{1,\,2}(\mathbb{Z})$ such that
$\gcd(N,\,Q(s,\,-r))=\gcd(N,\,Q(v,\,-u))=1$.
Then,
\begin{equation*}
[(r\omega_Q+s)\mathcal{O}_K]=[(u\omega_Q+v)\mathcal{O}_K]~
\textrm{in}~P_K(\mathfrak{n})/P
\quad
\Longleftrightarrow\quad
\begin{bmatrix}r&s\end{bmatrix}\equiv_Q
\begin{bmatrix}u&v\end{bmatrix}.
\end{equation*}
\end{lemma}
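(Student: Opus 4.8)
The plan is to pass to the finite ring $R=\mathcal{O}_K/\mathfrak{n}$ and read both sides of the asserted equivalence as a single congruence there. Since $\gcd(N,\,a)=1$ and $a\omega_Q\in\mathcal{O}_K$, the element $\omega_Q=a^{-1}(a\omega_Q)$ already lies in $\mathcal{O}_{K,\,p}$ for every $p\mid N$, so $r\omega_Q+s$ and $u\omega_Q+v$ have well-defined images in $R$, and by the hypothesis together with Lemma \ref{prime} (i) these images are units. Expanding elements of $\mathcal{O}_K=[a\omega_Q,\,1]$ in the $\mathbb{Z}/N\mathbb{Z}$-basis $\{a\omega_Q,\,1\}$ of $R$ (equivalently, multiplying through by the invertible $a$), one gets at once the dictionary
\begin{equation*}
r\omega_Q+s\equiv r'\omega_Q+s'\Mod{\mathfrak{n}}
\quad\Longleftrightarrow\quad
\begin{bmatrix}r&s\end{bmatrix}\equiv\begin{bmatrix}r'&s'\end{bmatrix}\Mod{N M_{1,\,2}(\mathbb{Z})},
\end{equation*}
which will convert every ideal-theoretic statement below into a congruence of row vectors.

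Next I would unwind the left-hand side. By the description of $P$, the principal ideal $\frac{r\omega_Q+s}{u\omega_Q+v}\mathcal{O}_K$ lies in $P$ exactly when some $\mathcal{O}_K^*$-multiple of its generator is multiplicatively congruent modulo $\mathfrak{n}$ to an element of $T$. Since for elements prime to $\mathfrak{n}$ the congruence $\equiv^*$ coincides with ordinary congruence modulo $\mathfrak{n}$, this says that the class equality $[(r\omega_Q+s)\mathcal{O}_K]=[(u\omega_Q+v)\mathcal{O}_K]$ holds if and only if
\begin{equation*}
r\omega_Q+s\equiv t\,\zeta\,(u\omega_Q+v)\Mod{\mathfrak{n}}
\quad\textrm{for some}~\zeta\in\mathcal{O}_K^*~\textrm{and}~t\in T.
\end{equation*}

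The crux is to realize the factor $\zeta\,(u\omega_Q+v)$ as the right action of an isotropy matrix on $\begin{bmatrix}u&v\end{bmatrix}$. Given $\beta=\begin{bmatrix}p&q\\m&n\end{bmatrix}\in I_{\omega_Q}$, Lemma \ref{unit} gives $\zeta:=j(\beta,\,\omega_Q)=m\omega_Q+n\in\mathcal{O}_K^*$, while $\beta(\omega_Q)=\omega_Q$ yields $\zeta\omega_Q=p\omega_Q+q$; multiplying these out produces the exact identity
\begin{equation*}
\zeta\,(u\omega_Q+v)=(up+vm)\omega_Q+(uq+vn),
\end{equation*}
so multiplication by $\zeta$ is precisely the passage $\begin{bmatrix}u&v\end{bmatrix}\mapsto\begin{bmatrix}u&v\end{bmatrix}\beta$. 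The cocycle relation (\ref{jabt}) makes $\beta\mapsto j(\beta,\,\omega_Q)$ a homomorphism $I_{\omega_Q}\rightarrow\mathcal{O}_K^*$, injective because $m\omega_Q+n=1$ forces $\beta=I_2$, and surjective by comparing orders in (\ref{isotropy}) (here $h(-3)=h(-4)=1$ forces every $\omega_Q$ with $d_K=-3,\,-4$ to be $\mathrm{SL}_2(\mathbb{Z})$-equivalent to $\tau_K$). Hence $\zeta$ runs through $\mathcal{O}_K^*$ exactly as $\beta$ runs through $I_{\omega_Q}$, and feeding this together with the dictionary of the first paragraph into the congruence above rewrites it as
\begin{equation*}
\begin{bmatrix}r&s\end{bmatrix}\equiv t\begin{bmatrix}u&v\end{bmatrix}\beta\Mod{N M_{1,\,2}(\mathbb{Z})}
\quad\textrm{for some}~t\in T,~\beta\in I_{\omega_Q},
\end{equation*}
the sign $\pm$ in the definition of $\equiv_Q$ being absorbed by $-I_2\in I_{\omega_Q}$.

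The one genuinely computational point, which I regard as the main (though purely bookkeeping) obstacle, is to check $I_{\omega_Q}\equiv\Gamma_Q\Mod{N M_2(\mathbb{Z})}$, so that the last congruence is literally $\begin{bmatrix}r&s\end{bmatrix}\equiv_Q\begin{bmatrix}u&v\end{bmatrix}$. Solving $\beta(\omega_Q)=\omega_Q$ explicitly yields the isotropy matrices with integer entries involving $-c$, $b/2$ and $(b\pm1)/2$, and the relations $(b^2+4)/4=ac$ for $d_K=-4$ and $(b^2+3)/4=ac$ for $d_K=-3$ identify $-c$ with $-a^{-1}(b^2+4)/4$, respectively $\pm a^{-1}(b^2+3)/4$, modulo $N$, matching the matrices listed in the definition of $\Gamma_Q$ entry by entry; for $d_K\neq-3,\,-4$ one simply has $I_{\omega_Q}=\{\pm I_2\}=\Gamma_Q$ by (\ref{otherisotropy}). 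This completes the chain of equivalences.
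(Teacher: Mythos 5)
Your proof is correct, and while it establishes the same chain of equivalences as the paper, it organizes the decisive step differently. The paper clears denominators by $a$, expands everything in the $\mathbb{Z}$-basis $\{\tau_K,\,1\}$ of $\mathcal{O}_K$, parametrizes $\mathcal{O}_K^*$ by the explicit set $U_K=\{(m,\,n)\in\mathbb{Z}^2~|~m\tau_K+n\in\mathcal{O}_K^*\}$, and grinds out componentwise congruences for $r$ and $s$ modulo $N$; the final identification of those congruences with the matrices in $\Gamma_Q$ is left to the reader. You instead invert $a$ locally at each $p\mid N$ so as to work in the basis $\{\omega_Q,\,1\}$ of $\prod_{p\mid N}\mathcal{O}_{K,\,p}$, and you make the appearance of $\Gamma_Q$ conceptual: the cocycle relation (\ref{jabt}) together with Lemma \ref{unit} shows $\beta\mapsto j(\beta,\,\omega_Q)$ is an injective homomorphism $I_{\omega_Q}\rightarrow\mathcal{O}_K^*$, which is surjective by comparing orders in (\ref{isotropy}) and (\ref{otherisotropy}) (using $h(-3)=h(-4)=1$), and multiplication of $u\omega_Q+v$ by the unit $j(\beta,\,\omega_Q)$ is exactly the right action $\begin{bmatrix}u&v\end{bmatrix}\mapsto\begin{bmatrix}u&v\end{bmatrix}\beta$. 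The lemma then reduces to the purely arithmetic check that $I_{\omega_Q}$ reduces modulo $N$ onto $\Gamma_Q$, which you verify via $(b^2+4)/4=ac$ and $(b^2+3)/4=ac$; your computation of the isotropy matrices is right, and this explains where the otherwise unmotivated entries $-a^{-1}(b^2+4)/4$ and $\pm a^{-1}(b^2+3)/4$ in the definition of $\Gamma_Q$ come from. The trade-off: the paper's basis $\{\tau_K,\,1\}$ works integrally without localization, whereas your version requires the (easily justified) remark that $\omega_Q\in\mathcal{O}_{K,\,p}$ for $p\mid N$, but in exchange it exposes the group-theoretic content of the definition of $\equiv_Q$ and makes the case analysis for $d_K=-3,\,-4$ transparent. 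One tiny imprecision: for $d_K\neq-3,\,-4$ you should cite (\ref{isotropy}) as well as (\ref{otherisotropy}) to cover the case where $\omega_Q$ is $\mathrm{SL}_2(\mathbb{Z})$-equivalent to $\tau_K$, though the conclusion $I_{\omega_Q}=\{\pm I_2\}$ is the same.
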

\begin{proof}
Note that by Lemma \ref{prime} (i) the fractional ideals $(r\omega_Q+s)\mathcal{O}_K$ and
$(u\omega_Q+v)\mathcal{O}_K$ belong to $P_K(\mathfrak{n})$.
Furthermore, we know that
\begin{equation}\label{OK*}
\mathcal{O}_K^*=
\left\{
\begin{array}{ll}
\{\pm1\} & \textrm{if}~K\neq\mathbb{Q}(\sqrt{-1}),\,\mathbb{Q}(\sqrt{-3}),\\
\{\pm 1,\,\pm\tau_K\} & \textrm{if}~K=\mathbb{Q}(\sqrt{-1}),\\
\{\pm 1,\,\pm\tau_K,\,
\pm\tau_K^2\} & \textrm{if}~K=\mathbb{Q}(\sqrt{-3})
\end{array}\right.
\end{equation}
(\cite[Exercise 5.9]{Cox}) and so
\begin{equation}\label{UK}
U_K=\{(m,\,n)\in\mathbb{Z}^2~|~m\tau_K+n\in\mathcal{O}_K^*\}
=\left\{\begin{array}{ll}
\{\pm(0,\,1)\} & \textrm{if}~K\neq\mathbb{Q}(\sqrt{-1}),\,\mathbb{Q}(\sqrt{-3}),\\
\{\pm(0,\,1),\,\pm(1,\,0)\} & \textrm{if}~K=\mathbb{Q}(\sqrt{-1}),\\
\{\pm(0,\,1),\,\pm(1,\,0),\,\pm(1,\,1)\} & \textrm{if}~K=\mathbb{Q}(\sqrt{-3}).
\end{array}\right.
\end{equation}
Then we achieve that
\begin{eqnarray*}
&&[(r\omega_Q+s)\mathcal{O}_K]=[(u\omega_Q+v)\mathcal{O}_K]
\quad\textrm{in}~P_K(\mathfrak{n})/P\\
&\Longleftrightarrow&
\left(\frac{r\omega_Q+s}{u\omega_Q+v}\right)\mathcal{O}_K\in P\\
&\Longleftrightarrow&
\frac{r\omega_Q+s}{u\omega_Q+v}\equiv^*\zeta t\Mod{\mathfrak{n}}
\quad\textrm{for some}~\zeta\in\mathcal{O}_K^*~\textrm{and}~t\in T\\
&\Longleftrightarrow&
a(r\omega_Q+s)\equiv\zeta ta(u\omega_Q+v)\Mod{\mathfrak{n}}
\quad\textrm{since $a(u\omega_Q+v)\mathcal{O}_K$
is relatively prime to $\mathfrak{n}$}\\
&&\hspace{6.5cm}\textrm{and}~a\omega_Q\in\mathcal{O}_K\\
&\Longleftrightarrow&
r\left(\tau_K+\frac{b_K-b}{2}\right)+as\equiv
(m\tau_K+n)t\left\{
u\left(\tau_K+\frac{b_K-b}{2}\right)+av\right\}\Mod{\mathfrak{n}}\\
&&\hspace{10.5cm}\textrm{for some}~(m,\,n)\in U_K\\
&\Longleftrightarrow&
r\tau_K+\left(\frac{r(b_K-b)}{2}+as\right)\equiv
t(-mub_K+mk+nu)\tau_K+t(-muc_K+nk)
\Mod{\mathfrak{n}}\\
&&\hspace{3.5cm}\textrm{with}~
k=\frac{u(b_K-b)}{2}+av,~\textrm{where}~\min(\tau_K,\,\mathbb{Q})
=x^2+b_Kx+c_K\\
&\Longleftrightarrow&
r\equiv t\left\{-\left(\frac{b_K+b}{2}\right)m+n\right\}u+tmav\Mod{N}\quad\textrm{and}\\
&&s\equiv ta^{-1}\left(\frac{b_K^2-b^2}{4}-c_K\right)mu+
t\left\{-\left(\frac{b_K-b}{2}\right)m+n\right\}v\Mod{N}\\
&&\hspace{10cm}\textrm{by the fact}~\mathfrak{n}=N[\tau_K,\,1]\\
&\Longleftrightarrow&
\begin{bmatrix}r&s\end{bmatrix}\equiv_Q
\begin{bmatrix}u&v\end{bmatrix}\quad\textrm{by (\ref{UK}) and
the definition of $\equiv_Q$}.
\end{eqnarray*}
\end{proof}

For each $Q\in\mathcal{Q}_N(d_K)$, let
\begin{equation*}
M_Q=\left\{\begin{bmatrix}u&v\end{bmatrix}\in M_{1,\,2}(\mathbb{Z})~|~
\gcd(N,\,Q(v,\,-u))=1\right\}.
\end{equation*}

\begin{proposition}\label{algorithm}
One can explicitly find quadratic forms
representing all distinct classes in $\mathcal{Q}_N(d_K)/\sim_\Gamma$.
\end{proposition}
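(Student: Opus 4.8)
The plan is to upgrade the surjectivity argument of Proposition \ref{surjective} into an exact, redundancy-free parametrization of $\mathcal{Q}_N(d_K)/\sim_\Gamma$. Since $\Gamma$ satisfies property (\ref{P}) by Proposition \ref{satisfies}, Theorem \ref{formclassgroup} gives the bijection $\phi_\Gamma : \mathcal{Q}_N(d_K)/\sim_\Gamma \to I_K(\mathfrak{n})/P$, so it suffices to list forms whose images under $\phi_\Gamma$ exhaust $I_K(\mathfrak{n})/P$ without repetition. The natural surjection $\rho : I_K(\mathfrak{n})/P \to I_K(\mathcal{O}_K)/P_K(\mathcal{O}_K) \cong \mathrm{C}(d_K)$ from Proposition \ref{surjective} has kernel $P_K(\mathfrak{n})/P$, so I would enumerate $I_K(\mathfrak{n})/P$ fiber by fiber over $\mathrm{C}(d_K)$. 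First I would list reduced forms $Q_1,\ldots,Q_h$ representing the $h=|\mathrm{C}(d_K)|$ classes of $\mathrm{C}(d_K)$ by classical reduction theory (\cite[Theorem 2.8]{Cox}) and replace each $Q_i$ by an $\mathrm{SL}_2(\mathbb{Z})$-equivalent form $Q_i'\in\mathcal{Q}_N(d_K)$ (\cite[Lemmas 2.3 and 2.25]{Cox}), exactly as in Proposition \ref{surjective}; the classes $[[\omega_{Q_i'},1]]$ then form a complete set of coset representatives for $\ker(\rho)$.

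Next I would describe each fiber concretely. By Lemma \ref{prime}(ii) every class of $P_K(\mathfrak{n})/P$ has the form $[(u\omega_{Q_i'}+v)\mathcal{O}_K]$ with $\begin{bmatrix} u & v\end{bmatrix}\in M_{Q_i'}$, and by Lemma \ref{equivGamma} two such pairs represent the same class exactly when they are $\equiv_{Q_i'}$-equivalent. Hence the assignment $\begin{bmatrix} u & v\end{bmatrix}\mapsto[(u\omega_{Q_i'}+v)\mathcal{O}_K]$ descends to a bijection
\[
M_{Q_i'}/\!\equiv_{Q_i'}\;\stackrel{\sim}{\longrightarrow}\;P_K(\mathfrak{n})/P .
\]
Because $\equiv_{Q_i'}$ is defined purely by congruences modulo $N$ together with the finite data $T$ and $\Gamma_{Q_i'}$, the left-hand set is computed by a finite search over residues $\begin{bmatrix} u & v\end{bmatrix}\bmod N$ subject to $\gcd(N,\,Q_i'(v,-u))=1$, which yields explicit representatives for every element of the fiber.

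Finally I would manufacture the forms. Given such a representative $\begin{bmatrix} u & v\end{bmatrix}$, the condition $\gcd(N,\,Q_i'(v,-u))=1$ forces $\gcd(u,v,N)=1$, so there is $\sigma\in\mathrm{SL}_2(\mathbb{Z})$ whose bottom row is congruent to $\begin{bmatrix} u & v\end{bmatrix}$ modulo $N$; setting $Q=Q_i'^{\sigma^{-1}}$ gives, by the computation in Proposition \ref{surjective}, a form in $\mathcal{Q}_N(d_K)$ with $\phi_\Gamma([Q])=[(u\omega_{Q_i'}+v)^{-1}\mathcal{O}_K]\cdot[[\omega_{Q_i'},1]]$ lying in the prescribed fiber. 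As $\begin{bmatrix} u & v\end{bmatrix}$ ranges over the chosen fiber representatives the element $[(u\omega_{Q_i'}+v)^{-1}\mathcal{O}_K]$ ranges bijectively over $P_K(\mathfrak{n})/P$ (inversion being a bijection of that group), so letting $i$ run over $1,\ldots,h$ produces forms $Q$ representing all $h\cdot|P_K(\mathfrak{n})/P|=|I_K(\mathfrak{n})/P|$ classes. Distinctness is automatic: forms attached to different $i$ map under $\rho\circ\phi_\Gamma$ to different classes of $\mathrm{C}(d_K)$, while forms attached to the same $i$ but inequivalent $\begin{bmatrix} u & v\end{bmatrix}$ give different elements of $P_K(\mathfrak{n})/P$ by Lemma \ref{equivGamma}.

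The step I expect to be the main obstacle is establishing that this fiber parametrization is a genuine bijection, i.e.\ that $\equiv_{Q_i'}$ captures exactly the relation of representing the same class in $P_K(\mathfrak{n})/P$, with neither collapse nor duplication, since it is this fact that makes the enumeration simultaneously exhaustive and non-redundant. This is precisely the combined content of Lemma \ref{prime}(ii) and Lemma \ref{equivGamma}, so once those are invoked the remaining work is bookkeeping: counting the fibers, exhibiting $\sigma$, and reading off $Q=Q_i'^{\sigma^{-1}}$.
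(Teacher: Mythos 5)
Your proposal is correct and follows essentially the same route as the paper: both use the decomposition of $I_K(\mathfrak{n})/P$ over $\mathrm{C}(d_K)$ from Proposition \ref{surjective}, parametrize each fiber $P_K(\mathfrak{n})/P$ by $M_{Q_i'}/\equiv_{Q_i'}$ via Lemmas \ref{prime}(ii) and \ref{equivGamma}, and lift $\begin{bmatrix}u&v\end{bmatrix}$ to $\sigma\in\mathrm{SL}_2(\mathbb{Z})$ to produce the representatives $Q_i'^{\sigma^{-1}}$. Your explicit verification of non-redundancy (distinct fibers via $\rho\circ\phi_\Gamma$, distinctness within a fiber via the if-and-only-if in Lemma \ref{equivGamma}) is only implicit in the paper's set-equality computation, but the content is identical.
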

\begin{proof}
We adopt the idea in the proof of Proposition \ref{surjective}.
Let $Q_1',\,Q_2',\,\ldots,\,Q_h'$
be quadratic forms in $\mathcal{Q}_N(d_K)$ which represent all distinct classes in
$\mathrm{C}(d_K)=\mathcal{Q}(d_K)/\sim$. Then we get by Lemma \ref{equivGamma} that for each $i=1,\,2,\,\ldots,\,h$
\begin{equation*}
P_K(\mathfrak{n})/P=
\left\{[(u\omega_{Q_i'}+v)\mathcal{O}_K]~|~
\begin{bmatrix}u&v\end{bmatrix}\in M_{Q_i'}/\equiv_{Q_i'}\right\}
=\left\{\left[\frac{1}{u\omega_{Q_i'}+v}\mathcal{O}_K\right]~|~
\begin{bmatrix}u&v\end{bmatrix}\in M_{Q_i'}/\equiv_{Q_i'}\right\}.
\end{equation*}
Thus we obtain by (\ref{decomp}) that
\begin{eqnarray*}
I_K(\mathfrak{n})/P&=&(P_K(\mathfrak{n})/P)\cdot
\{[[\omega_{Q_i'},\,1]]\in I_K(\mathfrak{n})/P~|~i=1,\,2,\,\ldots,\,h\}\\
&=&\left\{\left[\frac{1}{u\omega_{Q_i'}+v}
[\omega_{Q_i'},\,1]\right]~|~i=1,\,2,\,\ldots,\,h~
\textrm{and}~\begin{bmatrix}u&v\end{bmatrix}
\in M_{Q_i'}/\equiv_{Q_i'}\right\}\\
&=&\left\{
\left[\left[\begin{bmatrix}\mathrm{*}&\mathrm{*}\\
\widetilde{u}&\widetilde{v}\end{bmatrix}(\omega_{Q_i'}),\,1\right]\right]~|~
i=1,\,2,\,\ldots,\,h~
\textrm{and}~\begin{bmatrix}u&v\end{bmatrix}
\in M_{Q_i'}/\equiv_{Q_i'}\right\}
\end{eqnarray*}
where $\begin{bmatrix}\mathrm{*}&\mathrm{*}\\
\widetilde{u}&\widetilde{v}\end{bmatrix}$ is a matrix in
$\mathrm{SL}_2(\mathbb{Z})$ such that
$\begin{bmatrix}\mathrm{*}&\mathrm{*}\\
\widetilde{u}&\widetilde{v}\end{bmatrix}\equiv
\begin{bmatrix}\mathrm{*}&\mathrm{*}\\u&v\end{bmatrix}
\Mod{NM_2(\mathbb{Z})}$.
Therefore we conclude
\begin{equation*}
\mathcal{Q}_N(d_K)/\sim_\Gamma=\left\{
\left[Q_i'^{\left[\begin{smallmatrix}
\mathrm{*}&\mathrm{*}\\\widetilde{u}&\widetilde{v}
\end{smallmatrix}\right]^{-1}}
\right]~|~i=1,\,2,\,\ldots,\,h~
\textrm{and}~\begin{bmatrix}u&v\end{bmatrix}
\in M_{Q_i'}/\equiv_{Q_i'}
\right\}.
\end{equation*}
\end{proof}

\begin{example}\label{example}
Let $K=\mathbb{Q}(\sqrt{-5})$,
$N=12$ and $T=(\mathbb{Z}/N\mathbb{Z})^*$. Then
we get $P=P_{K,\,\mathbb{Z}}(\mathfrak{n})$
and $K_P=H_\mathcal{O}$, where $\mathfrak{n}=N\mathcal{O}_K$ and $\mathcal{O}$ is the order of conductor $N$ in $K$. There are two reduced forms of discriminant $d_K=-20$, namely
\begin{equation*}
Q_1=x^2+5y^2\quad\textrm{and}\quad
Q_2=2x^2+2xy+3y^2.
\end{equation*}
Set
\begin{equation*}
Q_1'=Q_1\quad
\textrm{and}\quad
Q_2'=Q_2^{\left[\begin{smallmatrix}1&1\\1&2\end{smallmatrix}\right]}
=7x^2+22xy+18y^2
\end{equation*}
which belong to $\mathcal{Q}_N(d_K)$.
We then see that
\begin{equation*}
M_{Q_1'}/\equiv_{Q_1'}=\left\{
\begin{bmatrix}0&1\end{bmatrix},\,
\begin{bmatrix}1&0\end{bmatrix},\,
\begin{bmatrix}1&6\end{bmatrix},\,
\begin{bmatrix}2&3\end{bmatrix},\,
\begin{bmatrix}3&2\end{bmatrix},\,
\begin{bmatrix}3&4\end{bmatrix},\,
\begin{bmatrix}4&3\end{bmatrix},\,
\begin{bmatrix}6&1\end{bmatrix}
\right\}
\end{equation*}
with corresponding matrices
\begin{eqnarray*}
\begin{bmatrix}
1&0\\0&1
\end{bmatrix},\,
\begin{bmatrix}
0&-1\\1&0
\end{bmatrix},\,
\begin{bmatrix}
0&-1\\1&6
\end{bmatrix},\,
\begin{bmatrix}
1&1\\2&3
\end{bmatrix},\,
\begin{bmatrix}
-1&-1\\3&2
\end{bmatrix},\,
\begin{bmatrix}
1&1\\3&4
\end{bmatrix},\,
\begin{bmatrix}
-1&-1\\4&3
\end{bmatrix},\,
\begin{bmatrix}
1&0\\6&1
\end{bmatrix}
\end{eqnarray*}
and
\begin{equation*}
M_{Q_2'}/\equiv_{Q_2'}=\left\{
\begin{bmatrix}0&1\end{bmatrix},\,
\begin{bmatrix}1&5\end{bmatrix},\,
\begin{bmatrix}1&11\end{bmatrix},\,
\begin{bmatrix}2&1\end{bmatrix},\,
\begin{bmatrix}3&1\end{bmatrix},\,
\begin{bmatrix}3&7\end{bmatrix},\,
\begin{bmatrix}4&5\end{bmatrix},\,
\begin{bmatrix}6&1\end{bmatrix}
\right\}
\end{equation*}
with corresponding matrices
\begin{eqnarray*}
\begin{bmatrix}
1&0\\0&1
\end{bmatrix},\,
\begin{bmatrix}
0&-1\\1&5
\end{bmatrix},\,
\begin{bmatrix}
0&-1\\1&11
\end{bmatrix},\,
\begin{bmatrix}
-1&-1\\2&1
\end{bmatrix},\,
\begin{bmatrix}
1&0\\3&1
\end{bmatrix},\,
\begin{bmatrix}
1&2\\3&7
\end{bmatrix},\,
\begin{bmatrix}
1&1\\4&5
\end{bmatrix},\,
\begin{bmatrix}
1&0\\6&1
\end{bmatrix}.
\end{eqnarray*}
Hence there are $16$ quadratic forms
\begin{equation*}
\begin{array}{llll}
x^2+5y^2,&5x^2+y^2,&41x^2+12xy+y^2,&29x^2-26xy+6y^2,\\ 49x^2+34xy+6y^2,&61x^2-38xy+6y^2,&89x^2+46xy+6y^2,&181x^2-60xy+5y^2,\\
7x^2+22xy+18y^2,&83x^2+48xy+7y^2,&623x^2+132xy+7y^2,&35x^2+20xy+3y^2,\\
103x^2-86xy+18y^2,&43x^2-18xy+2y^2,&23x^2-16xy+3y^2,&523x^2-194xy+18y^2
\end{array}
\end{equation*}
which represent all distinct classes in $\mathcal{Q}_N(d_K)/
\sim_\Gamma=\mathcal{Q}_{12}(-20)/\sim_{\Gamma_0(12)}$.
\par
On the other hand, for
$\begin{bmatrix}r_1&r_2\end{bmatrix}\in M_{1,\,2}(\mathbb{Q})
\setminus M_{1,\,2}(\mathbb{Z})$ the \textit{Siegel function}
$g_{\left[\begin{smallmatrix}r_1&r_2\end{smallmatrix}\right]}(\tau)$
is given by the infinite product
\begin{eqnarray*}
g_{\left[\begin{smallmatrix}r_1&r_2\end{smallmatrix}\right]}(\tau)
&=&-e^{\pi\mathrm{i}r_2(r_1-1)}
q^{(1/2)(r_1^2-r_1+1/6)}
(1-q^{r_1}e^{2\pi\mathrm{i}r_2})\\
&&\times
\prod_{n=1}^\infty(1-q^{n+r_1}e^{2\pi\mathrm{i}r_2})
(1-q^{n-r_1}e^{-2\pi\mathrm{i}r_2})\quad(\tau\in\mathbb{H})
\end{eqnarray*}
which generalizes the Dedekind eta-function
$\displaystyle q^{1/24}\prod_{n=1}^\infty
(1-q^n)$.
Then the function
\begin{equation*}
g_{\left[\begin{smallmatrix}
1/2&0\end{smallmatrix}\right]}(12\tau)^{12}
=\left(\frac{\eta(6\tau)}{\eta(12\tau)}\right)^{24}
\end{equation*}
belongs to
$\mathcal{F}_{\Gamma_0(12),\,\mathbb{Q}}$ (\cite[Theorem 1.64]{Ono} or \cite{K-L}), and the Galois conjugates of $g_{\left[\begin{smallmatrix}
1/2&0\end{smallmatrix}\right]}(12\tau_K)^{12}$ over $K$
are
\begin{equation*}
\begin{array}{llll}
g_1=g_{\left[\begin{smallmatrix}
1/2&0\end{smallmatrix}\right]}(12\sqrt{-5})^{12}, &
g_2=g_{\left[\begin{smallmatrix}
1/2&0\end{smallmatrix}\right]}(12\sqrt{-5}/5)^{12}, \\ g_3=g_{\left[\begin{smallmatrix}
1/2&0\end{smallmatrix}\right]}(12(6+\sqrt{-5})/41)^{12}, & g_4=g_{\left[\begin{smallmatrix}
1/2&0\end{smallmatrix}\right]}(12(-13+\sqrt{-5})/29)^{12},\\
g_5=g_{\left[\begin{smallmatrix}
1/2&0\end{smallmatrix}\right]}(12(17+\sqrt{-5})/49)^{12}, &
g_6=g_{\left[\begin{smallmatrix}
1/2&0\end{smallmatrix}\right]}(12(-19+\sqrt{-5})/61)^{12}, \\ g_7=g_{\left[\begin{smallmatrix}
1/2&0\end{smallmatrix}\right]}(12(23+\sqrt{-5})/89)^{12}, & g_8=g_{\left[\begin{smallmatrix}
1/2&0\end{smallmatrix}\right]}(12(-30+\sqrt{-5})/181)^{12},\\
g_9=g_{\left[\begin{smallmatrix}
1/2&0\end{smallmatrix}\right]}(12(11+\sqrt{-5})/7)^{12}, &
g_{10}=g_{\left[\begin{smallmatrix}
1/2&0\end{smallmatrix}\right]}(12(24+\sqrt{-5})/83)^{12}, \\ g_{11}=g_{\left[\begin{smallmatrix}
1/2&0\end{smallmatrix}\right]}(12(66+\sqrt{-5})/623)^{12}, & g_{12}=g_{\left[\begin{smallmatrix}
1/2&0\end{smallmatrix}\right]}(12(10+\sqrt{-5})/35)^{12},\\
g_{13}=g_{\left[\begin{smallmatrix}
1/2&0\end{smallmatrix}\right]}(12(-43+\sqrt{-5})/103)^{12}, &
g_{14}=g_{\left[\begin{smallmatrix}
1/2&0\end{smallmatrix}\right]}(12(-9+\sqrt{-5})/43)^{12}, \\ g_{15}=g_{\left[\begin{smallmatrix}
1/2&0\end{smallmatrix}\right]}(12(-8+\sqrt{-5})/23)^{12}, & g_{16}=g_{\left[\begin{smallmatrix}
1/2&0\end{smallmatrix}\right]}(12(-97+\sqrt{-5})/523)^{12}
\end{array}
\end{equation*}
possibly with some multiplicity. Now, we evaluate
\begin{eqnarray*}
&&\prod_{i=1}^{16}(x-g_i)\\&=&
x^{16}+1251968x^{15}-14929949056x^{14}+1684515904384x^{13}
-61912544374756x^{12}\\
&&+362333829428160x^{11}
+32778846351721632x^{10}-845856631699319872x^9\\
&&+4605865492693542918x^8+91164259067285621248x^7
-124917935291699694528x^6\\
&&+180920285564131280640x^5
-3000295144057714916x^4+8871452719720384x^3\\
&&+458008762175904x^2
-1597177179712x+1
\end{eqnarray*}
with nonzero discriminant. Thus
$g_{\left[\begin{smallmatrix}
1/2&0\end{smallmatrix}\right]}(12\tau_K)^{12}$ generates $K_P=H_\mathcal{O}$ over $K$.
\end{example}

\begin{remark}
In \cite{Schertz} Schertz
deals with various constructive problems on
the theory of complex multiplication
in terms of the Dedekind eta-function and Siegel function. See also \cite{K-L} and \cite{Ramachandra}.
\end{remark}

\bibliographystyle{amsplain}

\address{
Applied Algebra and Optimization Research Center\\
Sungkyunkwan University\\
Suwon-si, Gyeonggi-do 16419\\
Republic of Korea} {hoyunjung@skku.edu}
\address{
Department of Mathematical Sciences \\
KAIST \\
Daejeon 34141\\
Republic of Korea} {jkkoo@math.kaist.ac.kr}
\address{
Department of Mathematics\\
Hankuk University of Foreign Studies\\
Yongin-si, Gyeonggi-do 17035\\
Republic of Korea} {dhshin@hufs.ac.kr}

\end{document}